\definecolor{myblue}{HTML}{0000FF}
\definecolor{myred}{HTML}{FF0000}
\definecolor{myorange}{HTML}{FFA500}
\definecolor{myyellow}{HTML}{FFFF00}
\definecolor{mycyan}{HTML}{00FFFF}
\definecolor{mygreen}{HTML}{008000}
\definecolor{mybrown}{HTML}{A52A2A}
\renewcommand{\Pr}{\Prob}
\newcommand{\mul}[1]{N_{#1}}
\newcommand{\prev}[1]{{\varPhi_{#1}}}
\newcommand{\poi}{\mathrm{poi}}
\newcommand{\indic}{\mathbbm{1}}
\newcommand{\Var}{\mathrm{Var}}
\newcommand{\Expect}{\mathbb{E}}
\newcommand{\coeffs}[1]{h^{#1}}
\newcommand{\coeffET}{\coeffs{\rm ET}}
\newcommand{\supp}{S}
\newcommand{\suppest}{\hat{\supp}}
\newcommand{\ed}{\stackrel{\mathrm{def}}{=}}
\newcommand{\EE}{\mathbb{E}}
\newcommand{\ZZ}{\mathbb{Z}}
\newcommand{\Zplus}{\ZZ_{+}}
\newcommand{\cO}{O}
\newcommand{\cX}{\mathcal{X}}
\newcommand{\cE}{\mathcal{E}}
\newcommand{\iid}{\emph{i.i.d.\xspace}}
\newcommand{\ie}{\emph{i.e., }}
\newcommand{\eg}{\emph{e.g., }}
\newtheorem{Theorem}{Theorem}
\newtheorem{Corollary}{Corollary}
\newtheorem{Lemma}{Lemma}
\newtheorem{remark}{Remark}
\newcommand{\ignore}[1]{}%
\newcommand{\upto}{,\ldots,}
\newcommand{\sets}[1]{\{#1\}}
\newcommand{\Paren}[1]{\left(#1\right)}
\newcommand{\Sold}{S_{\text{old}}}
\newcommand{\Snew}{S_{\text{new}}}
\newcommand{\Th}{^\mathrm{th}}
\newcommand{\integers}{\mathbb{Z}}
\newcommand{\naturals}{\mathbb{N}}
\newcommand{\Prob}{\mathbb{P}}
\newcommand{\prob}[1]{\mathbb{P}\left(#1\right)}
\newcommand{\Binom}{\mathrm{Bin}}
\newcommand{\hyper}{\mathrm{Hyp}}
\newcommand{\pth}[1]{\left( #1 \right)}
\newcommand{\qth}[1]{\left[ #1 \right]}
\newcommand{\sth}[1]{\left\{ #1 \right\}}
\newcommand{\psum}{p_{_S}}
\newcommand{\dTV}{d_{\rm TV}}
\newcommand{\UE}{U^{\scriptscriptstyle\rm E}}
\newcommand{\xn}{x^n}
\newcommand{\Xn}{X^n}
\newcommand{\XNPOM}{X_{N+1}^{N+M}}
\newcommand{\Xnpom}{X_{n+1}^{n+m}}
\newcommand{\Us}[1]{U^{\scriptscriptstyle\rm #1}}
\newcommand{\Usss}[3]{U^{\scriptscriptstyle\rm #1}(#2,#3)}
\newcommand{\UXnm}{\Usss{}{\Xn}{\Xnpom}} 
\newcommand{\UXNM}{\Usss{}{X^N}{\XNPOM}} 
\newcommand{\UEXnm}{\Usss{E}{\Xn}{t}} 
\newcommand{\UEXNm}{\Usss{E}{X^N}{t}} 
\newcommand{\UGT}{\Us{GT}} 
\newcommand{\UGTXnt}{\Usss{GT}{\Xn}{t}}
\newcommand{\aUXnm}{U} 
\newcommand{\aUEXnm}{\Us{E}} 
\newcommand{\aUGT}{\Us{GT}} 
\newcommand{\aUGTXnm}{\aUGT} 
\newcommand{\aUGTxnm}{\aUGT} 
\newcommand{\LEnt}{\cE_{n,t}(U^{\scriptscriptstyle\rm E})}
\newcommand{\LELnt}{\cE_{n,t}(U^{\scriptscriptstyle\rm L})}
\newcommand{\LElnt}{\cE_{n,t}(U^{\scriptscriptstyle \ell})}
\newcommand{\LEGTnt}{\cE_{n,t}(U^{\scriptscriptstyle \rm GT})}
\newcommand{\U}{U}
\newcommand{\Ut}{U}
\newcommand{\Up}{U}
\newcommand{\Uh}{U^{\scriptscriptstyle\rm h}}
\newcommand{\Uht}{U^{\scriptscriptstyle\rm h}}
\newcommand{\Ugm}{\hat{U}}
\newcommand{\binlmt}{k}
\newcommand{\mm}{m}
\newcommand{\LJnm}{L^{\scriptscriptstyle\rm J}_{n}(m)}
\newcommand{\MJnd}{M^{\scriptscriptstyle\rm J}_n(\delta)}
\newcommand{\UET}{U^{\scriptscriptstyle\rm ET}}
\newcommand{\UJp}{U^{\scriptscriptstyle\rm J}}
\newcommand{\Urand}{U^{\scriptscriptstyle\rm \rand}}
\newcommand{\hrand}{h^{\scriptscriptstyle\rm \rand}}
\newcommand{\rand}{L}
\newcommand{\fixed}{\ell}
\newcommand{\diff}{\text{d}}
\newcommand{\hFGT}{h^{\scriptscriptstyle\rm GT}}
\newcommand{\DLt}{\xi_{\rand}(t)}
\newcommand{\UL}{U^{\scriptscriptstyle\rm L}}
\newcommand{\hypN}{R}
\newcommand{\hypn}{r}
\newcommand{\ntotal}{n_{\text{total}}}
\newcommand{\hL}{h^{\scriptscriptstyle\rm L}}
\newcommand{\arxiv}[2]{#1}
\title{Estimating the number of unseen species: \\
A bird in the hand is worth $\log n $ in the bush}
\author{
\begin{tabular}[t]{c@{\extracolsep{5em}}c@{\extracolsep{5em}}c} 
 Alon Orlitsky & Ananda Theertha Suresh & Yihong Wu\\
UCSD & UCSD & UIUC \\
\small\texttt{alon@ucsd.edu} & \small\texttt{asuresh@ucsd.edu} 
& \small\texttt{yihongwu@illinois.edu} \\
\end{tabular}
\vspace{2ex}
}
\begin{document}
\maketitle
\begin{abstract}
Estimating the number of unseen species is an important problem in
many scientific endeavors.  Its most popular formulation, introduced
by Fisher, uses $n$ samples to predict the number $U$ of hitherto
unseen species that would be observed if $t\cdot n$ new samples were
collected. Of considerable interest is the largest
ratio $t$ between
the number of
new and existing samples
for which $U$ can be accurately predicted.

In seminal works, Good and Toulmin constructed an intriguing estimator that
predicts $U$ for all $t\le 1$, thereby showing that the
number of species can be estimated for a population twice
as large as that observed. Subsequently Efron and Thisted obtained a modified estimator
that empirically predicts $U$ even for some $t>1$,
but without provable guarantees.

We derive a class of estimators that \emph{provably} predict
$U$ not just for constant $t>1$, but all the way up to 
$t$ proportional to $\log n$. 
This shows that the number of species can be estimated for
a population $\log n$ times larger than that observed, a
factor that grows arbitrarily large as $n$ increases.
We also show that this range 
is the best possible and that the
estimators' mean-square error is 
optimal up to constants for any $t$.
Our approach yields the first provable guarantee for the Efron-Thisted estimator and, in addition, a variant which achieves stronger theoretical and
experimental performance
than existing methodologies
on a variety of synthetic and real datasets.

The estimators we derive are simple linear estimators that are 
computable in time proportional to $n$. 
The performance guarantees hold uniformly for all distributions,
and apply to all four standard sampling models
commonly used across various scientific disciplines:
multinomial, Poisson, hypergeometric, and Bernoulli product.
\end{abstract}

\newpage
\tableofcontents

\section{Introduction}
\label{sec:intro}
Species estimation is an important problem in numerous scientific
disciplines.  Initially used to estimate ecological
diversity~\cite{C84,CL92,BF93,CCG12}, it was subsequently applied to
assess vocabulary size~\cite{ET76,TE87}, 
database attribute variation~\cite{HNSS95},
and 
password innovation~\cite{FH07}.
Recently it has found a number of
bio-science applications including estimation of 
bacterial and microbial diversity~\cite{KPD99,PBG01,HHJTB01,GCB07},
immune 
receptor diversity~\cite{R09},
and unseen genetic variations~\cite{ICL09}.

All approaches to the problem incorporate a statistical model, 
with the most popular being the \emph{extrapolation model}
introduced by Fisher, Corbet, and Williams~\cite{FCW43} in 1943.
It assumes that $n$ independent samples $X^n\ed X_1\upto X_n$ were collected from
an unknown distribution $p$, and calls for estimating
\[
U
\ed
\UXnm
\ed
\left|\sets{X_{n+1}^{n+m}}\backslash\sets{\Xn}\right|,
\]
the number of hitherto unseen symbols that would be observed 
if $m$ additional samples $X_{n+1}^{n+m} \ed X_{n+1}\upto X_{n+m}$,
were collected from the same distribution.

In 1956,
Good and Toulmin~\cite{GT56} predicted $\aUXnm$ by
a fascinating estimator
that has since intrigued statisticians and a
broad range of scientists alike~\cite{K86}.
For example, in the Stanford University Statistics Department
brochure~\cite{stanfordF}, published in the early 90's
and slightly abbreviated here,
Bradley Efron credited the problem and its elegant solution with
kindling his interest in statistics. 
As we shall soon see, Efron, along with Ronald Thisted,
went on to make significant contributions to this problem. 

\begin{tcolorbox}
In the early 1940's, naturalist Corbet had spent two years trapping butterflies in Malaya. At the
end of that time he constructed a table (see below) to show how
many times he had trapped various butterfly species. For example,
118 species were so rare that Corbet had trapped only
one specimen of each, 74 species had been trapped twice each, etc.\\[2mm]
\centerline{
\begin{tabular}{|c|c|c|c|c|c|c|c|c|c|c|c|c|c|c|c|} \hline
Frequency & 1 & 2 & 3 & 4 & 5 & 6 & 7 & 8 & 9 & 10 & 11 & 12 & 13 & 14
& 15\\ \hline
Species  & 118& 74& 44& 24& 29& 22& 20& 19& 20& 15& 12& 14& 6& 12&
6\\ \hline
\end{tabular}
}\\[1mm]
\indent Corbet returned to England with his table, and asked R.A. Fisher, the
greatest of all statisticians, how many new species he would see if he
returned to Malaya for another two years of trapping. This question
seems impossible to answer, since it refers to a column of Corbet's
table that doesn't exist, the ``0'' column. Fisher provided an
interesting answer that was later improved on [by Good and Toulmin].
The number of new species you can expect to see in two years of
additional trapping is
\[
118-74+44-24+ \ldots -12+6 = 75.
\]
\end{tcolorbox}

This example evaluates the Good-Toulmin estimator for the special case
where the original and future samples are of equal size, namely $\mm=n$.
To describe the estimator's general form we need only 
a modicum of nomenclature.

The \emph{prevalence} $\prev i\ed\prev i(\Xn)$ 
of an integer $i\ge0$ in $\Xn$ is the number of
symbols appearing $i$ times in $\Xn$.
For example, for $X^7$=\emph{bananas}, $\prev1=2$ and
$\prev2=\prev3=1$, and in Corbet's table, $\prev1=118$ and $\prev2=74$.
Let $t \ed \frac{m}{n}$ be the ratio of the number of future
and past samples so that $m=tn$. 
Good and Toulmin
estimated $U$
by the
surprisingly simple formula
\begin{equation}
\label{eq:GT}
\aUGTXnm
\ed 
\UGTXnt
\ed
-\sum_{i=1}^\infty \Paren{-t}^i\prev{i}.
\end{equation}
They showed that for all $t\leq 1$, $\aUGTxnm$ is nearly unbiased,
and that while $\aUXnm$ can be as high as $nt$,\footnote{For $a,b >0$, denote $a \lesssim b$ or $b \gtrsim a$ if $\frac{a}{b} \leq c $ 
 for some universal constant $c$. Denote $a \asymp b$ if both $a \lesssim b$ and $a \gtrsim b$.}
\[
\EE
(\aUGTXnm - \aUXnm)^2 \lesssim nt^2,
\]
hence in expectation,
$\aUGTxnm$ approximates $\aUXnm$ to within just $\sqrt{n}t$.
\arxiv{Figure~\ref{fig:GT_basic}}{(Fig. 1)} shows that for the ubiquitous
Zipf distribution, $\aUGTxnm$ indeed approximates $\aUXnm$ well for
all $t<1$.
\begin{figure}[ht]
\centering
\arxiv{\subfigure[]{\includegraphics[scale=0.5]{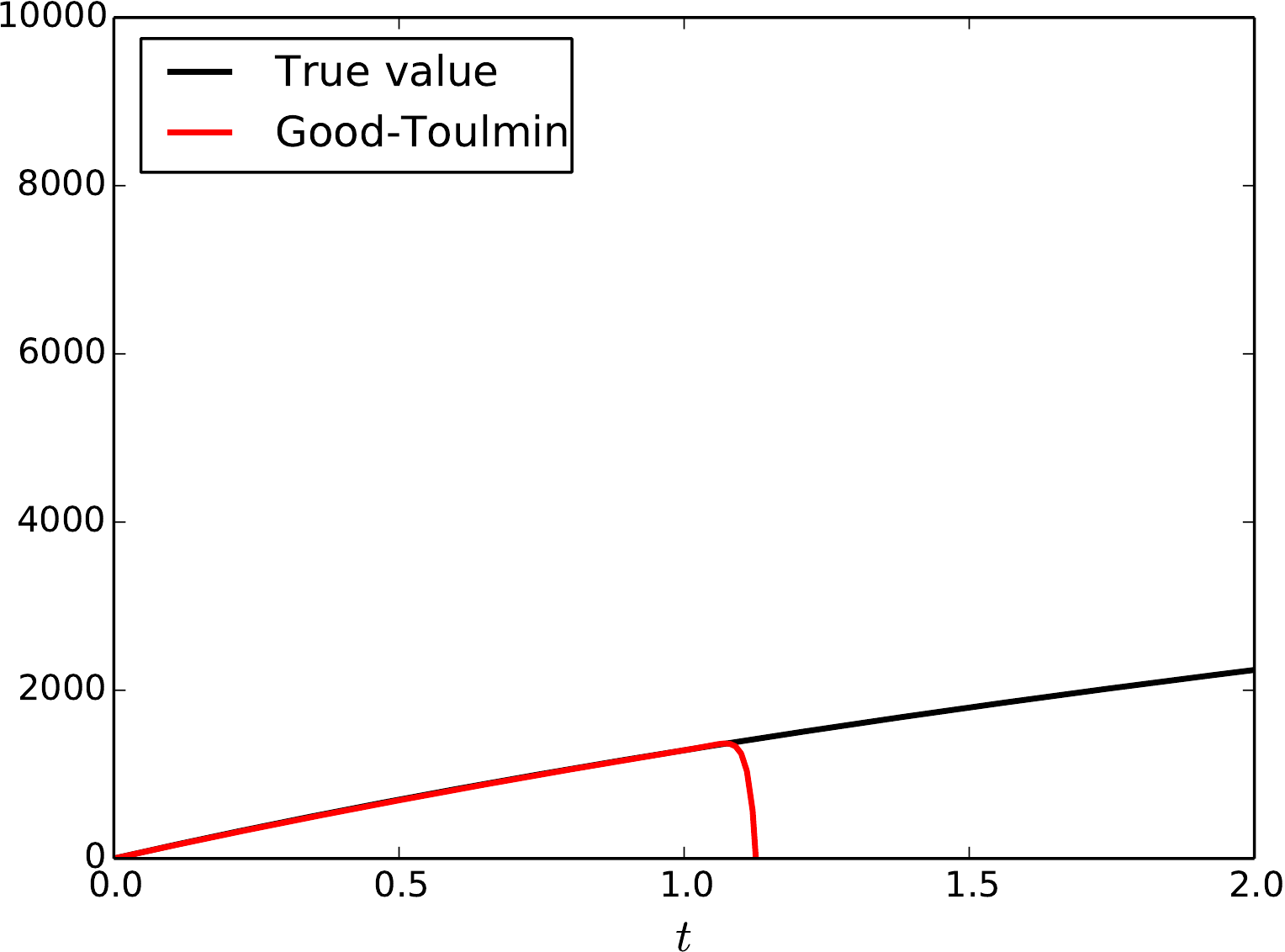}}}{\subfigure[]{\includegraphics[scale=0.5]{FGT_zipf-crop.pdf}}}
\arxiv{\subfigure[]{\includegraphics[scale=0.5]{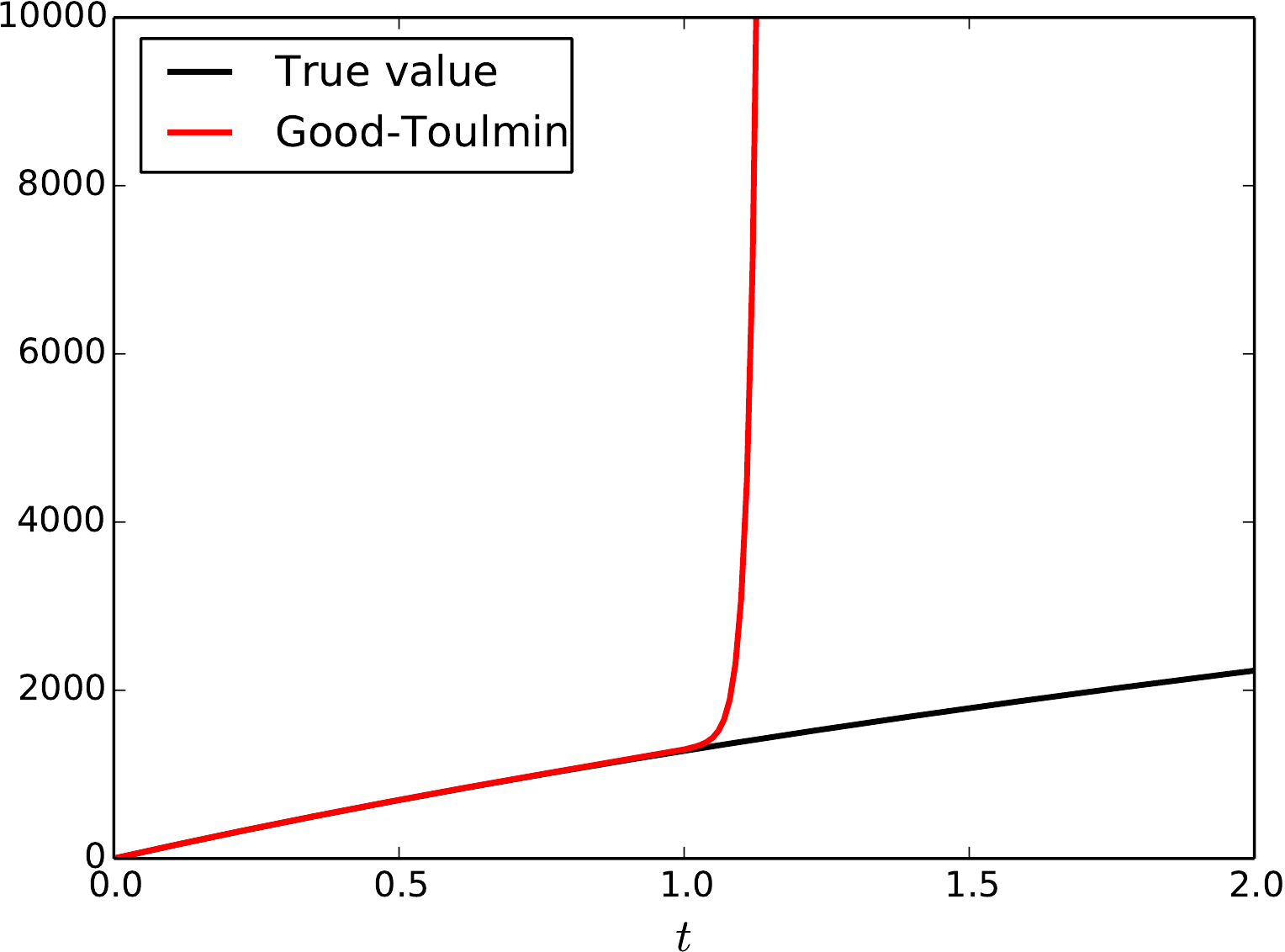}}}{\subfigure[]{\includegraphics[scale=0.5]{FGT_zipf_2-crop.pdf}}}
\caption{GT estimate as a function of $t$ for two realizations random samples of
size $n=5000$ generated by a Zipf distribution $p_i \propto 1/(i+10)$
for $1\le i\le 10000$.}
\label{fig:GT_basic}\end{figure}
Naturally, we would like to estimate $\aUXnm$ for as large a $t$ as
possible.
However, as $t>1$ increases, $\aUGTxnm$ grows as 
$(-t)^i\prev{i}$ for
the largest $i$ such that $\prev i>0$.  Hence whenever any symbol
appears more than once, $\aUGTxnm$ grows super-linearly in $t$, 
eventually far exceeding $\aUXnm$ that grows at most linearly in
$t$.
\arxiv{Figure~\ref{fig:GT_basic}}{(Fig. 1)} also shows that for the same Zipf distribution,
for $t>1$ indeed  $\aUGTxnm$ does not approximate $\aUXnm$ at all.


To predict $\aUXnm$ for $t>1$, Good and Toulmin~\cite{GT56}
suggested using the \emph{Euler transform}~\cite{AS64}
that converts an alternating series into another series with
the same sum, and heuristically often converges faster.
Interestingly, Efron and Thisted~\cite{ET76} showed that when the
Euler transform of $\UGT$ is truncated after $k$ terms,
it can be expressed as another simple linear estimator, 
\begin{equation*}
\UET
\ed
\sum_{i=1}^n \coeffET_i\cdot \prev{i},
\label{eq:UET}
\end{equation*}
where
\begin{equation*}
\label{eq:coeffET}
\coeffET_i
\ed
-\Paren{-t}^{i}\cdot\prob{\Binom\Big(\binlmt, \frac{1}{1+t}\Big) \geq i},
\end{equation*}
and
\[
\prob{\Binom\Paren{\binlmt,\frac{1}{1+t}} \geq i} 
=
\begin{cases}
\sum_{j=i}^{\binlmt}
\binom{\binlmt}{j}\frac{t^{\binlmt-j}}{(1+t)^{\binlmt}}
& i \leq \binlmt,\\
0 & i > \binlmt,
\end{cases}
\]
is the \emph{binomial} tail probability that decays with $i$,
thereby moderating the rapid growth of $(-t)^i$.


Over the years, $\UET$ has been used by numerous researchers in a variety
of scenarios and a multitude of applications. 
Yet despite its wide-spread use and robust empirical results,
no provable guarantees have been established for its performance
or that of any related estimator when $t>1$.  
The lack of theoretical understanding, has also precluded
clear guidelines for choosing the parameter $k$ in $\UET$.

	\label{sec:new}
\section{Approach and results}
We construct a family of estimators that 
\emph{provably} predict $\aUXnm$ optimally not just for constant $t>1$,
but all the way up to $t \propto \log n$.
This shows that per each observed sample, we can infer properties
of $\log n$ yet unseen samples.
The proof technique is general and provides a disciplined guideline
for choosing  the parameter $\binlmt$ for $\UET$ and, in addition, a modification 
that outperforms $\UET$.

\subsection{Smoothed Good-Toulmin (SGT) estimator}
	\label{sec:SGT}

To obtain a new class of estimators, we too start with $\UGT$, but 
unlike $\UET$ that was derived from $\UGT$ via analytical
considerations aimed at improving the convergence rate,
we take a probabilistic view that controls the bias and
variance of $\UGT$ and balances the two to obtain a more
efficient estimator. 


Note that what renders $\UGT$ 
inaccurate when $t > 1$ is not its bias but mainly its high variance
due to the exponential growth of the coefficients $(-t)^i$ in \prettyref{eq:GT}; in fact $\UGT$
is the unique unbiased estimator for all $t$ and $n$ in the closely related Poisson
sampling model (see \prettyref{sec:def}). 
Therefore it is tempting to truncate the series \prettyref{eq:GT} at the
$\ell\Th$ term and use the partial sum as an estimator:
\begin{equation}
\label{eq:UGT-ell}
\U^{\fixed}
\ed
-\sum_{i=1}^\fixed \Paren{-t}^i\prev{i}.
\end{equation}
However, for $t>1$, it can be shown that for certain distributions most of the symbols typically appear
$\fixed$ times and hence the last term in \prettyref{eq:UGT-ell} dominates, resulting in a large bias
and inaccurate estimates regardless of the choice of $\ell$ (see \prettyref{sec:taylor} for a rigorous justification).


To resolve this problem, we truncate the Good-Toulmin estimator at a
random location, denoted by an independent random nonnegative integer $L$, and
average over the distribution of $\rand$, which yields the following estimator:
\begin{equation}
\UL = \EE_L \left[- \sum_{i=1}^\rand
  \Paren{-t}^i\prev{i}\ignore{(\xn)}\right].
	\label{eq:UL0}
\end{equation}
 The key insight is that since the bias of $U^\ell$
typically alternates signs as $\ell$ grows, averaging over
different cutoff locations takes advantage of the cancellation and
dramatically reduces the bias.
Furthermore, the estimator \prettyref{eq:UL0} can be expressed simply as a linear combination of prevalences:
\begin{equation}
\UL =  \EE_L \left[-
  \sum_{i\geq 1} \Paren{-t}^i\prev{i} \indic_{i \leq L} \ignore{(\xn)}
  \right] = - \sum_{i \geq 1} \Paren{-t}^i \prob{L \ge i} \prev{i}.
	\label{eq:UL}
\end{equation}
We shall refer to estimators of the form \prettyref{eq:UL} \emph{Smoothed Good-Toulmin (SGT)} estimators and the distribution of $L$ the \emph{smoothing distribution}.

Choosing different smoothing distributions results a variety of
linear estimators, where the tail probability $\prob{L \ge i}$ compensates the exponential growth of $\Paren{-t}^i$ thereby stabilizing the variance.  Surprisingly, though the motivation and approach are quite different,
SGT estimators include $\UET$ in \prettyref{eq:UET} as a special case which corresponds to
the binomial smoothing $L\sim\Binom(k,  \frac{1}{1+t})$. 
This provides an intuitive probabilistic interpretation of $\UET$, which was
originally derived via Euler's transform and analytic considerations.
As we show in the next section, this interpretation leads to the first theoretical 
guarantee for $\UET$ as well as improved estimators that are provably optimal.



\subsection{Main results}
\label{sec:main}

Since $\aUXnm$ takes in values between $0$ and $nt$,
 we measure the performance of an estimator $\UE$ by the worst-case \emph{normalized mean-square error (NMSE)},
\[
\LEnt
\ed 
\max_p \EE_{p}\Paren{\frac{\UE - \aUXnm}{nt}}^2.
\]
Observe that this criterion conservatively evaluates the performance of the
estimator for the worst possible 
distribution.
The trivial estimator that always predicts $nt/2$ new elements has NMSE equal to $1/4$,
and we would like to construct estimators with vanishing NMSE,
which can estimate $\aUXnm$ up to an error that diminishes with $n$,
regardless of the data-generating distribution; in particular, we are interested in the largest $t$ for which this is possible.

Relating the bias and variance of $\UL$ to the expectation of $t^L$
and another functional we obtain the following performance guarantee for SGT estimators with appropriately chosen smoothing distributions.
\begin{Theorem}
\label{thm:res_main}
For Poisson or binomially distributed $L$ with the parameters given in
Table\arxiv{~\ref{tab:values}}{~1}, for all $t \geq 1$ and $n\in \naturals$, 
\[
\LELnt
\lesssim
\frac{1}{n^{1/t}}.
\]
\end{Theorem}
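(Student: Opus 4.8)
I would prove the estimate in the Poisson sampling model — where, writing $\lambda_x\ed np_x$, the multiplicities $N_x\sim\mathrm{Poi}(\lambda_x)$ are \emph{independent} across symbols $x$ — and then carry it over to the multinomial, hypergeometric and Bernoulli‑product models via the standard coupling between those models and the Poissonized one, which perturbs the NMSE only by lower‑order terms. In the Poisson model $\UL=\sum_x g_L(N_x)$ with $g_L(0)=0$ and $g_L(i)=-(-t)^i\Pr(L\ge i)$ for $i\ge1$, so $\UL$ is a sum of independent terms, and likewise $\aUXnm=\sum_x\indic\{N_x=0\}\indic\{x\text{ appears among the }m=tn\text{ new samples}\}$. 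I would start from $\EE(\UL-\aUXnm)^2=(\EE[\UL-\aUXnm])^2+\Var(\UL-\aUXnm)$ and $\Var(\UL-\aUXnm)\le 2\Var(\UL)+2\Var(\aUXnm)$, reducing the task to bounding (i) the bias $|\EE[\UL]-\EE[\aUXnm]|$, (ii) $\Var(\UL)$, and (iii) $\Var(\aUXnm)\le\EE[\aUXnm]\le nt$; the last already contributes only $\lesssim 1/(nt)\le n^{-1/t}$ to the NMSE.

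\textbf{Variance.} By independence, $\Var(\UL)\le\sum_x\EE[g_L(N_x)^2]=\sum_x\sum_{i\ge1}e^{-\lambda_x}\frac{\lambda_x^i}{i!}\,t^{2i}\Pr(L\ge i)^2$. Since $t\ge1$, Markov's inequality applied to $t^L$ gives $t^i\Pr(L\ge i)\le\EE[t^L]$, hence each inner sum is at most $\EE[t^L]^2(1-e^{-\lambda_x})$; summing $1-e^{-\lambda_x}\le\lambda_x$ over $x$ with $\sum_x\lambda_x=n$ yields $\Var(\UL)\le n\,\EE[t^L]^2$. This is the step that singles out the moment generating function $\EE[t^L]$ as the quantity governing the variance.

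\textbf{Bias.} Using $\EE[\prev i]=\sum_x e^{-\lambda_x}\lambda_x^i/i!$ and the Taylor expansion of $e^{-t\lambda_x}$, a short computation gives the exact identity $\EE[\UL]-\EE[\aUXnm]=\sum_x e^{-\lambda_x}\,\EE_L[R_L(t\lambda_x)]$, where $R_j(y)\ed e^{-y}-\sum_{i=0}^{j}\frac{(-y)^i}{i!}$ is the Taylor remainder. The crucial point — and the reason randomized truncation succeeds where a fixed one fails — is that one must \emph{not} move the absolute value inside the expectation over $L$. Instead I would use the integral form $R_j(y)=\frac{(-1)^{j+1}}{j!}\int_0^y(y-s)^j e^{-s}\,\diff s$, interchange the expectation and the integral, and observe that $\EE_L\big[\frac{(-(y-s))^L}{L!}\big]$ equals $e^{-r}J_0\big(2\sqrt{r(y-s)}\big)$ when $L\sim\mathrm{Poi}(r)$ and $\big(\tfrac{t}{1+t}\big)^{k}L_k\big(\tfrac{y-s}{t}\big)$, a Laguerre polynomial, when $L\sim\Binom\big(k,\tfrac1{1+t}\big)$. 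Invoking the classical uniform bounds $|J_0|\le1$ and $|L_k(x)|\le e^{x/2}$ for $x\ge0$ then gives $|\EE_L[R_L(t\lambda)]|\lesssim\rho\, e^{c\lambda}\min(1,t\lambda)$ with $(\rho,c)=(e^{-r},0)$ in the Poisson case and $(\rho,c)=((\tfrac{t}{1+t})^k,\tfrac12)$ in the binomial case, so that $|\EE[\UL]-\EE[\aUXnm]|\lesssim\rho\sum_x e^{-(1-c)\lambda_x}\min(1,t\lambda_x)\lesssim\rho\,nt$ — the last sum being $O(nt)$ because the terms with $\lambda_x\le1$ contribute at most $t\sum_x\lambda_x\le tn$ while fewer than $n$ symbols have $\lambda_x>1$. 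Hence the squared‑bias contribution to the NMSE is $\lesssim\rho^2$.

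\textbf{Choosing the parameters and transferring.} Combining the three pieces, $\LELnt\lesssim\rho^2+\EE[t^L]^2/(nt^2)+1/(nt)$. For Poisson smoothing $\rho=e^{-r}$ and $\EE[t^L]=e^{r(t-1)}$, so the choice $r\asymp(\ln n)/t$ from Table~\ref{tab:values} makes $\rho^2\asymp n^{-1/t}$ and $\EE[t^L]^2/(nt^2)\asymp n^{-1/t}/t^2$, both $\lesssim n^{-1/t}$ for every $t\ge1$. For the binomial family $\rho=(\tfrac{t}{1+t})^k$ and $\EE[t^L]=(\tfrac{2t}{1+t})^k$, and the entry $k=\Theta(\ln n)$ in Table~\ref{tab:values} balances $(\tfrac{t}{1+t})^{2k}$ against $(\tfrac{2t}{1+t})^{2k}/(nt^2)$ to the same effect, the admissible range of $k$ being nonempty precisely because $t\ln(1+1/t)\ge\ln2$ for all $t\ge1$. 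Finally the Poisson‑model bound is transferred to the remaining models through the coupling mentioned above. I expect the bias estimate to be the crux of the argument: extracting the cancellation in $\EE_L[R_L(t\lambda)]$ rests entirely on the sharp oscillatory bounds for $J_0$ and $L_k$, and any estimate crude enough to discard that oscillation would replace $\min(1,t\lambda)$ by a factor growing in $\lambda$ and fail to deliver the $n^{-1/t}$ rate.
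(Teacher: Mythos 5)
Your proposal is correct and follows essentially the same path as the paper: after the change of variable $u=y-s$ your integral form of the Taylor remainder, $\EE_L[R_L(y)]=-\int_0^y\EE_L\big[\frac{(-(y-s))^L}{L!}\big]e^{-s}\,\diff s$, is exactly the identity of Lemma~\ref{lem:Delta} (there derived via the incomplete Gamma function), and the subsequent steps — the Bessel/Laguerre evaluations with $|J_0|\le 1$ and $|L_k(x)|\le e^{x/2}$, the Markov bound $t^i\Pr(L\ge i)\le\EE[t^L]$ driving the variance, and the final balancing of $\rho^2$ against $\EE[t^L]^2/(nt^2)$ — reproduce Lemmas~\ref{lem:general_bounds}--\ref{lem:rand_bias}, Theorem~\ref{thm:general_bound}, and Corollaries~\ref{cor:poi_tail}--\ref{cor:bin_tail}. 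The only minor departures are cosmetic: you bound $\Var(\UL)$ and $\Var(U)$ separately via $\Var(A-B)\le 2\Var(A)+2\Var(B)$ rather than summing the per-symbol variances of $\UL-U$ directly as in Lemma~\ref{lem:general_bounds}, you spell out only two of the three Table~\ref{tab:values} rows (the $q=2/(t+2)$ case is handled identically once one notes $\tfrac{tq}{2(1-q)}\le 1$, as in Corollary~\ref{cor:bin_tail}), and your appeal to a ``standard coupling'' for the non-Poisson models is a vaguer placeholder for the paper's actual arguments in Section~\ref{sec:extensions} (Poisson-approximation TV bounds and Steele's inequality for multinomial/Bernoulli, and an inductive variance bound for the hypergeometric model).
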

\begin{table}[ht]
\begin{center}
\begin{tabular}{|c|l|l|}
\hline
Smoothing distribution &\hfill Parameters\hfill${}$ &\hfill  $\LELnt\lesssim$\hfill${}$\\ \hline
Poisson $(r)$ & $r=\frac{1}{2t}\log_e \frac{n(t+1)^2}{t-1}$ & $n^{-1/t}$\\[0.4cm]  
Binomial $(\binlmt, q)$ & $\binlmt =  \left \lceil \frac{1}{2} \log_2  \frac{nt^2}{t-1} \right \rceil $, $q =\frac{1}{t+1} $ & $n^{-\log_2(1+1/t)}$  \\[0.4cm] 
Binomial $(\binlmt, q)$ & $\binlmt =  \left \lceil \frac{1}{2} \log_3  \frac{nt^2}{t-1} \right \rceil $, $q = \frac{2}{t+2}$ & $n^{-\log_3(1+2/t)}$ \\[0.4cm]   \hline
\end{tabular}
\caption{NMSE of SGT estimators for three smoothing distributions.
Since for any $t \geq 1$, $\log_3(1+2/t) \geq \log_2(1+1/t) \geq 1/t$,
binomial smoothing with $q = 2/(2+t)$ yields the best convergence rate.}
\label{tab:values}
\end{center}
\end{table}

\prettyref{thm:res_main} provides a principled way for choosing the
parameter $k$ for $\UET$ and the first provable guarantee for
its performance, shown in \prettyref{tab:values}.
Furthermore, the result shows that a modification of $\UET$
with $q= \frac{2}{t+2}$ enjoys even faster convergence rate
and, as experimentally demonstrated in \prettyref{sec:experiments}, outperforms 
the original version of Efron-Thisted as well as other state-of-the-art estimators.

Furthermore, 
SGT estimators are essentially optimal as witnessed by the following matching minimax lower bound.
\begin{Theorem}
\label{thm:res_lb}
There exist universal constant $c,c'$ such that for any $t\geq c$, any $n\in \naturals$,  and any estimator $\UE$
\[
\LEnt \gtrsim \frac{1}{n^{c'/t}}.
\]
\end{Theorem}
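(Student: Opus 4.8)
The plan is to establish the minimax lower bound via the standard two-point (or two-prior) method: exhibit two prior distributions on the data-generating distribution $p$ that induce nearly the same distribution on the observed sample $X^n$ but very different values of the target $\aUXnm$. Concretely, I would use Poissonization (so that the multinomial sampling decouples across symbols, as in \prettyref{sec:def}) and construct two mixtures over distributions $p$ supported on a large number of symbols, each of tiny mass $\asymp 1/(nt)$. The idea is that a symbol of mass $\lambda/n$ is seen in the first $n$ samples with probability $\approx 1-e^{-\lambda}$ and contributes to $U$ with probability $\approx e^{-\lambda}(1-e^{-t\lambda})$; by choosing a random mass we can try to match the low-order behaviour of the ``seen'' statistic while forcing a gap in the ``unseen'' contribution.

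The key technical device I would invoke is a moment-matching / polynomial approximation argument, exactly parallel to the duality used in support-size and entropy estimation lower bounds (e.g.\ the Wu--Yang and Valiant--Valiant style constructions). The observation that a Poisson$(\lambda)$ count has probability generating behaviour governed by $\{e^{-\lambda}\lambda^j/j!\}$ means that two distributions of $\lambda$ whose first $K$ moments agree produce histograms (prevalences $\prev i$, $i\le K$) that are statistically indistinguishable from $n$ samples, while the expected unseen count differs by an amount controlled by how well the function $\lambda\mapsto e^{-\lambda}(1-e^{-t\lambda})$ can be approximated by degree-$K$ polynomials on the relevant range of $\lambda$. Choosing $K\asymp \log n$ (matching the scale at which prevalences become negligible) and computing the best polynomial approximation error of this exponential-type function gives, after optimization, a gap of order $n^{-c'/t}$ relative to the natural scale $nt$ — which is the claimed bound. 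One then upgrades the ``expected-value gap'' into a genuine estimation lower bound via Le Cam's two-point lemma together with a concentration argument showing $\aUXnm$ is close to its conditional mean under each prior (so that no estimator can straddle both), and finally de-Poissonizes.

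The steps in order: (i) set up the Poissonized model and reduce worst-case NMSE over $p$ to Bayes risk under a prior on $p$, which further reduces to a one-dimensional problem on the per-symbol mass $\lambda$; (ii) identify the target functional $f_t(\lambda)=e^{-\lambda}(1-e^{-t\lambda})$ and the ``distinguishing'' functionals (the Poisson probabilities $e^{-\lambda}\lambda^j/j!$ for $j\le K$); (iii) invoke a moment-matching result producing two priors on $\lambda$ with matched moments up to degree $K$ but $\EE f_t$ differing by (best degree-$K$ poly approximation error of $f_t$); (iv) estimate that approximation error as a function of $K$ and $t$, optimize $K\asymp\log n$ to get the exponent $c'/t$; (v) bound the total-variation distance between the two induced sample laws using the matched moments (each symbol contributes $O(1/n)$ to a chi-square-type quantity, summed over $\asymp nt$ symbols); (vi) apply Le Cam plus concentration of $U$ around its mean to conclude.

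The main obstacle I expect is step (iv): pinning down the rate of best polynomial approximation for $f_t(\lambda)=e^{-\lambda}-e^{-(1+t)\lambda}$ on the scaled interval and tracking the dependence on $t$ so that the exponent comes out as $c'/t$ rather than something weaker like $1/t^2$ — this requires a careful choice of the interval length (roughly $[0,O(\log n)]$) and a sharp Chebyshev/Bernstein-type estimate, since a lossy approximation bound would degrade the exponent. A secondary subtlety is controlling the TV distance in step (v): naively each matched-moment pair contributes $O((\,\cdot\,/n)^{K+1})$ but summed over $nt$ symbols and with the large magnitudes of high-order Poisson probabilities one must verify the bound stays $o(1)$, which is precisely what forces the interval length and hence the final exponent; getting these two constraints to be simultaneously satisfiable (nonempty feasible region) is the crux, and is exactly the place where the threshold $t\asymp\log n$ shows up as the boundary of the achievable regime, matching \prettyref{thm:res_main}.
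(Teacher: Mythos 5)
Your plan takes a genuinely different route from the paper's. You propose to build the lower bound from scratch via two moment-matched priors on the per-symbol mass, polynomial approximation of the species functional $f_t(\lambda)=e^{-\lambda}(1-e^{-t\lambda})$, a chi-square/TV bound under Poissonization, and Le Cam's two-point lemma --- essentially re-deriving a Wu--Yang-style lower bound tailored to the species-estimation functional. The paper instead performs a short \emph{reduction} to the already-established support-size lower bound: given any estimator $\hat U$ of $U$, define $\hat S = \hat U + \prev{+}$, invoke the minimax lower bound for $S(p)$ over $\{p:p^+_{\min}\ge 1/k\}$ from \cite{WY14b} (Eq.~\prettyref{eq:supplb}), and use the pointwise residual bound $|S - \tilde U - \prev{+}|\le k e^{-nt/k}$, where $\tilde U = \EE[U\mid X^n]$ --- which holds because after $nt$ further draws essentially every symbol of mass $\ge 1/k$ has appeared. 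A single choice of $k \asymp nt/\log(\cdot)$ then converts the guaranteed support-size error into a species-estimation error, with no new approximation theory required.

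The comparison is instructive. Your approach would work in principle, and it makes visible the mechanism (indistinguishability of histograms when low moments of $\lambda$ match, controlled by polynomial approximation of $f_t$); but it requires solving a genuinely new best-polynomial-approximation problem for $f_t$ on the right interval, and you correctly flag this (your step~(iv)) as the crux without resolving it, so the proposal as written leaves the key quantitative step open. It also needs a separate concentration step to pass from a gap in $\EE[U]$ to a gap in $U$, which the paper handles implicitly through the same residual estimate. The paper's reduction buys brevity and robustness by delegating all of the Chebyshev/Le Cam machinery to the support-size result, where the approximation problem is for the indicator $\indic_{p_x\ge 1/k}$ and has already been solved; what it gives up is directness and possibly tightness of constants, since the exponent $c'/t$ is inherited (through the choice of $k$) from constants appearing in \cite{WY14b} rather than from an approximation estimate optimized for $f_t$ itself.
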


Theorems \ref{thm:res_main} and \ref{thm:res_lb}
determine the limit of predictability up to a
constant multiple.
\begin{Corollary}
For any $\delta > 0 $, 
\[
\lim_{n \to \infty} \frac{\max\sets{t: \LEnt<\delta\textrm{\,\, for some $\UE$}}}{\log n}
\asymp 
\frac{1}{\log \frac{1}{\delta}}.
\]
\end{Corollary}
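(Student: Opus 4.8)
The corollary is a direct quantitative consequence of \prettyref{thm:res_main} and \prettyref{thm:res_lb}: in both directions one simply takes logarithms in the estimate $\LEnt\asymp n^{-\Theta(1)/t}$ and solves for $t$. Abbreviate $t^\star_n(\delta)\ed\max\sets{t:\LEnt<\delta\text{ for some }\UE}$. I would work in the only meaningful regime, where $\delta$ is below a universal constant, so that $1/\log(1/\delta)$ is a positive bounded quantity and the trivial estimator (with worst-case NMSE $1/4$) does not by itself make $t^\star_n(\delta)$ infinite; outside this regime the statement is either vacuous or false, so the restriction is implicit.

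For the lower bound on $t^\star_n(\delta)$ I would use the achievability half of \prettyref{thm:res_main}: there is a universal constant $B$ with $\LELnt\le B\,n^{-1/t}$ for all $t\ge1$ and $n$ (e.g.\ the Poisson-smoothed $\UL$; the binomial rate in \prettyref{tab:values} only sharpens the constant). Since $B\,n^{-1/t}<\delta$ is equivalent to $t<\log n/\log(B/\delta)$, every $t$ with $1\le t<\log n/\log(B/\delta)$ belongs to the set defining $t^\star_n(\delta)$; dividing by $\log n$ and letting $n\to\infty$ gives $\liminf_n t^\star_n(\delta)/\log n\ge 1/\log(B/\delta)\gtrsim 1/\log(1/\delta)$. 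For the upper bound I would use \prettyref{thm:res_lb}: there are universal $c,c',a$ with $\LEnt\ge a\,n^{-c'/t}$ for every estimator whenever $t\ge c$, so $\LEnt<\delta$ forces $t<c$ or $t<c'\log n/\log(a/\delta)$; for fixed $\delta$ and $n$ large the latter dominates, whence $\limsup_n t^\star_n(\delta)/\log n\le c'/\log(a/\delta)\lesssim 1/\log(1/\delta)$. Combining, both $\liminf_n$ and $\limsup_n$ of $t^\star_n(\delta)/\log n$ lie between universal constant multiples of $1/\log(1/\delta)$, which is the claimed $\asymp$ (read, as it must be, through $\liminf$ and $\limsup$, since the literal limit is not established).

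The main obstacle is not mathematical --- all the work is in Theorems~\ref{thm:res_main} and \ref{thm:res_lb} --- but bookkeeping, and spelling it out is most of the write-up. First, those theorems assume $t\ge1$ and $t\ge c$; since for fixed $\delta$ the relevant threshold $t\asymp\log n$ diverges, these hold automatically once $n$ is large and do not affect the limiting ratio. Second, the achievability and converse constants turn $\log(1/\delta)$ into $\log(B/\delta)$ and $\log(a/\delta)$; for $\delta$ bounded away from the trivial threshold these differ from $\log(1/\delta)$ by bounded factors, and coincide asymptotically as $\delta\to0$, which is exactly what $\asymp$ permits. Third, one must fix the admissibility set for $t$ --- integers $m=tn$ in the extrapolation model, real $t$ in the Poisson model --- but this perturbs the threshold by only $O(1/n)$, which is irrelevant after dividing by $\log n$.
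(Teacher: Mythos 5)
Your proposal is correct and supplies exactly the logarithmic bookkeeping the paper leaves implicit: the corollary is stated as a direct consequence of Theorems~\ref{thm:res_main} and~\ref{thm:res_lb} with no written proof, and your argument (solve $n^{-1/t}\asymp\delta$ for $t$ in each direction, observe that the achievability hypothesis $t\ge1$ and converse hypothesis $t\ge c$ are automatic once $n$ is large, and absorb the universal constants $B,a,c,c'$ into the $\asymp$) matches the intended derivation; the paper's own Corollary~\ref{cor:asym} confirms the achievability side uses this same log-solve. Your caveats---that the claim is only meaningful for $\delta$ below a fixed constant such as $1/4$, and that $\lim$ must really be read via $\liminf$/$\limsup$---are accurate observations about an informally stated result.
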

The rest of the paper is organized as follows: In
Section~\ref{sec:def}, we describe the four statistical models
commonly used across various scientific disciplines, namely, the
multinomial, Poisson, hypergeometric, and Bernoulli product models.
Among the four models Poisson is the simplest to analyze and hence in
Sections~\ref{sec:prelim} and~\ref{sec:upper}, we first prove \prettyref{thm:res_main}
for the Poisson model and in Section~\ref{sec:extensions} we prove
similar results for the other three statistical models. In
Section~\ref{sec:lower}, we prove the lower bound for the multinomial and Poisson
models. Finally, in Section~\ref{sec:experiments} we demonstrate the
efficiency and practicality of our estimators on a variety of
synthetic and data sets.
\section{Statistical models}
\label{sec:def}
The extrapolation paradigm has been applied to several statistical
models.
In all of them, an initial sample of size related to $n$
is collected, resulting in a set $\Sold$ of observed elements.
We consider collecting a new sample of size related to $\mm$,
that would result in a yet unknown set $\Snew$ of observed elements,
and we would like to estimate
\[
|\Snew\backslash\Sold|,
\]
the number of unseen symbols that will appear in the new sample.
For example, for the observed sample \texttt{bananas} and future
sample \texttt{sonatas}, 
$\Sold=\sets{\tt a,b,n,s}$,
$\Snew=\sets{\tt a,n,o,s,t}$, and $|\Snew\backslash\Sold|=|\sets{\tt o,t}|=2$.

Four statistical models have been commonly used in the
literature (cf.~survey \cite{BF93} and \cite{CCG12}), and our
results apply to all of them. The first three statistical models are
also referred as the~\emph{abundance models} and the last one is often
referred to as the~\emph{incidence model} in ecology~\cite{CCG12}.
\begin{description}
\item[Multinomial:]
This is Good and Toulmin's original model where the samples are independently and identically distributed (i.i.d.),
and the initial and new samples consist of exactly $n$ and $m$
elements respectively. Formally, 
$X^{n+m}=X_1\upto X_{n+m}$ are generated independently according
to an unknown discrete distribution of finite or even infinite support,
$\Sold=\sets{\Xn}$, and $\Snew=\sets{X_{n+1}^{n+m}}$.

\item[Hypergeometric:]
This model corresponds to a sampling-without-replacement variant of the multinomial model.
Specifically, $X^{n+m}$ are drawn uniformly without replacement from an unknown
collection of symbols that may contain repetitions, for example, an urn
with some white and black balls. Again, $\Sold=\sets{\Xn}$
and $\Snew=\sets{X_{n+1}^{n+m}}$. 

\item[Poisson:]
As in the multinomial model, the samples are also \iid, 
but the sample sizes, instead of
being fixed, are Poisson distributed. 
Formally, $N\sim\poi(n)$, $M\sim\poi(\mm)$, $X^{N+M}$ are generated
independently according to an unknown discrete distribution,
$\Sold=\sets{X^{N}}$, and $\Snew=\sets{X_{N+1}^{N+M}}$.

\item[Bernoulli-product:]
In this model we observe signals
from a collection of independent processes over subset of an unknown set
$\cX$. Every $x\in\cX$ is associated with an unknown probability $0\le
p_x\le 1$, where the probabilities do not necessarily sum to 1.  Each
sample $X_i$ is a \emph{subset} of $\cX$ where symbol $x\in\cX$
appears with probability $p_x$ and is absent with probability $1-p_x$,
independently of all other symbols. $\Sold=\cup_{i=1}^n X_{i}$ and
$\Snew=\cup_{i=n+1}^{n+m}X_i$.
\end{description}

For theoretical analysis in Sections \ref{sec:prelim} and \ref{sec:upper} we use the Poisson sampling model as the leading example 
 due to its
simplicity. Later in \prettyref{sec:extensions}, we show that very similar
results continue to hold for the other three models.

We close this section by discussing two problems that are closely related to the 
extrapolation model, namely,
\emph{support size estimation} and \emph{missing mass estimation}, which correspond to $m=\infty$ and $m=1$ respectively. 
Indeed, the probability that the next sample is new is precisely the expected value of $U$ for $m=1$,
which is the goal in the basic Good-Turing problem~\cite{G53,R68, MS00,OS15}.
On the other hand, any estimator $\aUEXnm$ for $U$ can be converted to a (not necessarily good) support size
estimator by adding the number of observed symbols.
Estimating the support size of an underlying distribution has been
studied by both ecologists~\cite{C84, CL92, BF93} and
theoreticians~\cite{RRSS09, VV11, VV13, WY14};
\ignore{ In ecology, several lower
  bounds on the underlying support size are known.  Most ecology
  papers study the support estimation in this context~\cite{C84, CL92,
    BF93}. While no upper bounds on the support can be
  rigorously proven, provable lower bounds can be obtained~\cite{}.}%
  however, to make the problem non-trivial, 
all statistical models impose a lower bound on the minimum non-zero
probability of each symbol, which is assumed to be known to the statistician.  We
discuss these estimators and their differences to our results in Section~\ref{sec:support}.

\section{Preliminaries and the Poisson model}
\label{sec:prelim}
Throughout the paper, 
we use standard asymptotic notation,
e.g., for any positive sequences $\{a_n\}$ and $\{b_n\}$, denote $a_n=\Theta(b_n)$ or $a_n  \asymp b_n$
if $1/c\le a_n/ b_n \le c$ for some universal constant $c>0$.
Let $\indic_{A}$ denote the indicator random variable of an event $A$.
Let $\Binom(n,p)$ denote the binomial distribution with $n$ trials and success probability $p$ and let $\poi(\lambda)$ denote the Poisson distribution with mean $\lambda$.
All logarithms are with respect to the natural base unless otherwise specified. 

Let $p$ be a probability distribution over a discrete set $\cX$, namely $p_x \geq 0$ for all $x\in \cX$
 and $\sum_{x \in \cX} p_x = 1$.
Recall that the sample sizes are Poisson distributed: $N \sim \poi(n)$, $M \sim \poi(m)$, and  $ 
t = \frac{m}{n}$. We abbreviate the number of unseen symbols by
\[
\U \ed \UXNM,
\]
and we denote an estimator by $\UE\ed\UEXNm$.

Let $\mul{x}$ and $\mul{x}'$ denote the multiplicity of a symbol $x$
in the current samples and future samples, respectively. 
Let $\lambda_x\ed np_x$. Then a symbol $x$ 
appears $\mul{x}\sim\poi(np_x)=\poi(\lambda_x)$ times, and for any $i\geq 0$,
\[
\EE[\indic_{\mul{x} = i}] = e^{-\lambda_x} \frac{\lambda_x^i}{i!}.
\]
Hence
\[
\EE[\prev{i}] = \EE\left[\sum_{x} \indic_{\mul{x} = i} \right] =
\sum_{x} e^{-\lambda_x} \frac{\lambda_x^i}{i!}.
\]
A helpful property of Poisson sampling is that the multiplicities of
different symbols are independent of each other.  
Therefore, for any function $f(x,i)$,
\[
\Var\left(\sum_x f(x,N_x)\right) = \sum_x \Var(f(x,N_x)).
\]
Many of our derivations rely on these three equations.
For example,
\begin{align*}
\EE[\U] 
 = \sum_{x} \EE[\indic_{\mul{x} = 0}] \cdot \EE[\indic_{\mul{x}' > 0}]
= \sum_{x} e^{-\lambda_x}\cdot (1-e^{-t\lambda_x}),
\end{align*}
and 
\begin{align*}
\Var(\U)
&=
\Var \Paren{\sum_x \indic_{\mul{x} = 0} \cdot \indic_{\mul{x}' > 0}} 
= 
\sum_x \Var \Paren{\indic_{\mul{x} = 0} \cdot \indic_{\mul{x}' > 0}}  \\
&
\le
\sum_x \EE \left[\indic_{\mul{x} = 0}  \cdot  \indic_{\mul{x}' > 0}\right]
=
\EE\left[ \Ut\right].
\end{align*}
Note that these equations imply that the standard deviation of $U$ is
at most $\sqrt{\EE[U]} \ll \EE[U]$, hence $U$ highly
concentrates around its expectation, and estimating $\U$ and $\EE[\U]$ are
essentially the same.

\subsection{The Good-Toulmin estimator}
Before proceeding with general estimators, we prove a few properties of $\UGT$.  Under the Poisson model, $\UGT$ is in fact the \emph{unique} unbiased estimator for $\U$.
\begin{Lemma}[\cite{ET76}]
For any distribution,
\[
\EE[\U] = \EE[\UGT].
\]
\end{Lemma}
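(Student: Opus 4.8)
The plan is to evaluate both sides in closed form using the Poisson identities recalled above and check that they coincide. We already have $\EE[\U] = \sum_x e^{-\lambda_x}(1-e^{-t\lambda_x})$. For the right-hand side, I would first move the expectation inside the defining series of $\UGT$, so that $\EE[\UGT] = -\sum_{i\ge1}(-t)^i\,\EE[\prev i]$, then substitute $\EE[\prev i]=\sum_x e^{-\lambda_x}\lambda_x^i/i!$, and swap the order of the resulting double sum over $i$ and $x$. The inner sum is then a familiar exponential series, $\sum_{i\ge1}(-t\lambda_x)^i/i! = e^{-t\lambda_x}-1$, giving
\[
\EE[\UGT] \;=\; -\sum_x e^{-\lambda_x}\bigl(e^{-t\lambda_x}-1\bigr) \;=\; \sum_x e^{-\lambda_x}\bigl(1-e^{-t\lambda_x}\bigr) \;=\; \EE[\U].
\]

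The only real content is justifying the two interchanges of summation, and both follow from one a priori bound. Since $p$ is a probability distribution, $\sum_x \lambda_x = n\sum_x p_x = n$, and since $\lambda_x = np_x \le n$ we get $\sum_x \lambda_x^i \le n^{i-1}\sum_x\lambda_x = n^i$ for every $i\ge1$. Hence
\[
\sum_{i\ge1} t^i \sum_x e^{-\lambda_x}\frac{\lambda_x^i}{i!} \;\le\; \sum_{i\ge1}\frac{(tn)^i}{i!} \;=\; e^{tn}-1 \;<\; \infty ,
\]
so the double series $\sum_{i,x}(-t)^i e^{-\lambda_x}\lambda_x^i/i!$ is absolutely convergent; by Tonelli/Fubini this licenses swapping the $i$- and $x$-sums. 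The same bound shows $\EE\sum_{i\ge1} t^i\prev i = \sum_{i\ge1} t^i\EE[\prev i] \le e^{tn}-1<\infty$, which by dominated convergence on the partial sums $-\sum_{i=1}^{N}(-t)^i\prev i$ justifies pulling the expectation inside the $i$-sum that defines $\UGT$.

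I do not expect a genuine obstacle: once the Poisson formula for $\EE[\prev i]$ and the independence of symbol multiplicities are available, the lemma is essentially bookkeeping. The one point to handle with a little care is that the support of $p$ may be infinite, so $\prev i$ need not vanish for all large $i$ along every realization — but the sample size $N\sim\poi(n)$ is finite almost surely, so for each realization only finitely many $\prev i$ are nonzero, and the expectation-level bound $e^{tn}-1$ controls the tail uniformly, which is all that is needed.
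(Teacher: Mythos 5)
Your proof is correct and follows essentially the same chain of equalities as the paper's (the paper just reads the computation in the opposite direction, starting from $\EE[\U]$ and Taylor-expanding $1-e^{-t\lambda_x}$ to reach $-\sum_i(-t)^i\EE[\prev i]$). Your extra care in justifying the summation interchanges via the bound $\sum_x \lambda_x^i \le n^i$ and Fubini/dominated convergence is sound and slightly more rigorous than the paper, which leaves those steps implicit.
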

\begin{proof}
\begin{align*}
\EE [\U]
&=
\EE \left[\sum_{x} \indic_{\mul{x} = 0} \cdot \indic_{\mul{x} > 0}\right] 
=
\sum_{x} e^{-\lambda_x} \cdot  \left(1 - e^{-t\lambda_x} \right) \\
&=
-\sum_x e^{-\lambda_x} \cdot \sum^\infty_{i=1} \frac{(-t\lambda_x)^i}{i!} 
=
-\sum^\infty_{i=1} (-t)^i\cdot \sum_x e^{-\lambda_x} \frac{\lambda_x^i}{i!}  \\
& =
-\sum^\infty_{i=1} (-t)^i\cdot \EE[ \prev{i}] = \EE[\UGT]. \qedhere
\end{align*}
\end{proof}
Even though $\UGT$ is unbiased for all $t$, for $t>1$ it has
high variance and hence does not estimate $U$ well even for the simplest distributions.
\begin{Lemma}
\label{lem:FGT}
For any $t >1$,
\begin{align*}
\lim_{n \to \infty} \LEGTnt = \infty.
\end{align*}
\end{Lemma}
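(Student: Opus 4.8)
The plan is to exhibit a single data-generating distribution (allowed to depend on $n$) for which $\UGT$ has mean-square error vastly exceeding the normalization $(nt)^2$; since $\LEGTnt$ is a maximum over $p$, this suffices.

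First I would rewrite $\UGT$ as a sum over symbols in the Poisson model. Since $\prev{i}=\sum_x\indic_{\mul{x}=i}$, we have $\UGT=\sum_x g(\mul{x})$ with $g(j)\ed-(-t)^j$ for $j\ge1$ and $g(0)\ed0$; because the multiplicities $\mul{x}$ are independent, $\Var(\UGT)=\sum_x\Var(g(\mul{x}))$. Using the Poisson moment generating function $\EE[s^{\mul{x}}]=e^{\lambda_x(s-1)}$ at $s=t^2$ gives $\EE[g(\mul{x})^2]=\EE[t^{2\mul{x}}\indic_{\mul{x}\ge1}]=e^{(t^2-1)\lambda_x}-e^{-\lambda_x}$, while $\EE[g(\mul{x})]=e^{-\lambda_x}(1-e^{-t\lambda_x})\in[0,1]$, so $\Var(g(\mul{x}))\ge e^{(t^2-1)\lambda_x}-2$. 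The crucial observation is that $t>1$ makes the exponent $t^2-1$ positive, so this bound explodes when $\lambda_x$ is large.

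Next I would choose $p$ to put all of its mass on one symbol (or, for a non-degenerate example, uniformly on a fixed constant number of symbols), so that some $\lambda_x$ equals $n$ (respectively $n/c$). Then $\Var(\UGT)\ge e^{(t^2-1)n}-2$, which dominates every polynomial in $n$. To convert this into an error bound I would invoke the unbiasedness $\EE[\UGT]=\EE[\U]$ from the preceding lemma: with $a\ed\UGT-\EE[\U]$ and $b\ed\U-\EE[\U]$ we have $\UGT-\U=a-b$, and the elementary inequality $(a-b)^2\ge\tfrac12a^2-b^2$ gives $\EE[(\UGT-\U)^2]\ge\tfrac12\Var(\UGT)-\Var(\U)$. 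Since the variance bound established above yields $\Var(\U)\le\EE[\U]\le nt$, this is negligible against $\Var(\UGT)$, and therefore
\[
\LEGTnt\ \ge\ \frac{\tfrac12\Var(\UGT)-\Var(\U)}{(nt)^2}\ \gtrsim\ \frac{e^{(t^2-1)n}}{n^2t^2},
\]
which tends to infinity as $n\to\infty$ for every fixed $t>1$.

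The Poisson moment computations above are routine; the one substantive choice — and the step that is easy to get wrong — is the test distribution. A uniform distribution on a support that grows with $n$ does \emph{not} work: optimizing the support size there gives $\Var(\UGT)=\Theta(n)=o((nt)^2)$, so the NMSE would actually vanish. The blow-up genuinely requires a distribution with a symbol whose Poisson mean $\lambda_x$ diverges, which forces the support to stay bounded; this is the precise sense in which $\UGT$ fails even for the simplest distributions.
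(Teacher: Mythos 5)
Your proof is correct. You bound $\Var(\UGT)$ from below via the Poisson MGF, showing $\Var(g(\mul{x}))\ge e^{(t^2-1)\lambda_x}-2$, then combine this with the unbiasedness $\EE[\UGT]=\EE[\U]$, the pointwise inequality $(a-b)^2\ge\tfrac12a^2-b^2$, and the bound $\Var(\U)\le\EE[\U]$ to conclude. The paper takes a more elementary route: it chooses $p$ uniform over two symbols $a,b$ and restricts attention to the single event $\sets{\mul{a}=\mul{b}=n/2}$, on which $\U=0$ and $\UGT=-2(-t)^{n/2}$, so that $\EE[(\UGT-\U)^2]\ge\Pr(\mul{a}=\mul{b}=n/2)\cdot 4t^n\gtrsim t^n/n$, which already overwhelms $(nt)^2$. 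Both arguments rest on the same phenomenon -- a symbol with Poisson mean $\lambda_x=\Theta(n)$ makes the $(-t)^{\mul{x}}$ coefficient blow up exponentially -- but the paper never computes a variance or a second moment; it just exhibits one atom of the distribution of $\mul{a},\mul{b}$ that carries enormous error. Your approach is a bit more systematic and avoids the parity bookkeeping (the paper must handle odd $n$ by a separate case), at the cost of needing the variance decomposition, the unbiasedness lemma, and $\Var(\U)\le\EE[\U]$. Your closing remark -- that a uniform distribution with growing support would not work and that the blow-up genuinely requires a divergent $\lambda_x$ -- is a correct and useful observation that the paper does not make explicit.
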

\begin{proof}
Let $p$ be the uniform distribution over two symbols $a$ and $b$, namely,
$p_a = p_b = 1/2$. First consider even $n$.
Since $(\UGT - U)^2$ is always nonnegative,
\[
\EE[(\UGT-\U)^2] \geq \Pr(\mul{a} =\mul{b} = n/2) (2(-t)^{n/2})^2 =
\left(e^{-n/2} \frac{(n/2)^{n/2}}{(n/2)!} \right)^2 4 t^n \geq \frac{4t^n}{e^2n},
\]
where we used the fact that $k! \leq (\frac{k}{e})^k \sqrt{k} e$.
Hence for $t > 1$,
\[
\lim_{n \to \infty} \frac{\EE[(\UGT-\U)^2] }{(nt)^2}
\geq \lim_{n \to \infty}  \frac{4t^n}{e^2n (nt)^2} = \infty.
\]
The case of odd $n$ can be shown similarly by considering the event $\mul{a} = \lfloor n/2\rfloor, \mul{b} = \lceil n/2\rceil$.
\ignore{
In this case, according to Eq.\prettyref{eq:Upoisson}
we have $\U = \prev{0} (1-e^{-nt/2})$ and hence
\begin{align*}
\UGT - \U
= -\sum^{\infty}_{i=1} \prev{i} (-t)^{i} - \prev{0} (1-e^{-nt/2})
 = \sum_{x \in \{a,b\}} \left(-\sum^{\infty}_{i=1} \indic_{\mul{x} = i} \cdot (-t)^{i} -  \indic_{\mul{x} = 0}\cdot (1-e^{-nt/2})\right).
\end{align*}
Since the GT estimator is unbiased, we have
\begin{align*}
\EE[(\UGT - \U )^2]
& = \Var(\UGT - \U) \\
& = \Var \left( \sum_{x \in \{a,b\}} -\left(\sum^{\infty}_{i=1} \indic_{\mul{x} = i} (-t)^{i} -  \indic_{\mul{x} = 0} (1-e^{-nt/2}) \right)\right) \\
& =  \sum_{x \in \{a,b\}} \Var \left(-\sum^{\infty}_{i=1} \indic_{\mul{x} = i} (-t)^{i} -  \indic_{\mul{x} = 0} (1-e^{-nt/2}) \right) \\
& \stackrel{(a)}{=} 2 \Var \left(-\sum^{\infty}_{i=1} \indic_{\mul{a} = i} (-t)^{i} -  \indic_{\mul{a} = 0} (1-e^{-nt/2}) \right) \\
& = 2 \EE \left[ \left(-\sum^{\infty}_{i=1} \indic_{\mul{a} = i} (-t)^{i} -  \indic_{\mul{a} = 0} (1-e^{-nt/2}) \right)^2\right] \\
& \stackrel{(b)}{=} 2 \EE \left[\sum^{\infty}_{i=1} \indic_{\mul{a} = i} (-t)^{2i} +  \indic_{\mul{a} = 0} (1-e^{-nt/2})^2 \right] \\ 
& = 2 \sum^\infty_{i=1} e^{-n/2} \frac{(n/2)^i t^{2i}}{i!} + 2e^{-n/2}(1-e^{-nt/2}) \\
& = 2 e^{\frac{n(t^2-1)}{2}} - 2e^{-n(t+1)/2},
\end{align*}
where (a) follows from symmetry as both symbols have identical probability, and
(b) follows from the fact that for any indicator random variable $\indic^2 =  \indic$. Hence for any $t > 1$
\[
\lim_{n \to \infty} \frac{\EE[(\UGT-\U)^2] }{(nt)^2} = \infty.
 \qedhere
\]
}%
\end{proof}
\subsection{General linear estimators} 
Following \cite{ET76}, we consider general linear estimators of the form
\begin{equation}
\Uht = \sum^\infty_{i=1} \prev{i} \cdot h_i,
	\label{eq:Uh}
\end{equation}
which can be identified with 
a formal power series $h(y)=
\sum^\infty_{i=1}\frac{h_i y^i}{i!}$. For example, $\UGT$ in \prettyref{eq:GT}
corresponds to the function $h(y) = 1-e^{-yt}$.  
The next lemma bounds the bias and variance of any linear estimator $\Uh$ using
properties of the function $h$. In \prettyref{sec:smooth} we apply this result to 
the SGT estimator 
whose coefficients are of the specific form:
\begin{equation*}
h_i = -\Paren{-t}^{i}\cdot\prob{\rand \geq i}.
	\label{eq:hL}
\end{equation*}
Let $\prev{+} \ed \sum^\infty_{i=1} \prev{i}$ denote the number of observed symbols.
\begin{Lemma}
\label{lem:general_bounds}
The bias of $\Uht$ is
\begin{equation*}
\label{eq:Upoisson}
\EE[\Uht - \U] = \sum_{x} e^{-\lambda_x} \left( h(\lambda_x) - (1-e^{-t\lambda_x}) \right),
\end{equation*}
and the variance satisfies
\[
\Var(\Uht - \U) \leq \EE[\prev{+}]  \cdot \sup_{i \geq 1} h^2_i + \EE[U].
\]
\end{Lemma}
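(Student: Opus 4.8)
The plan is to establish the bias identity by a direct expectation computation using the three Poisson facts recalled above, and then to bound the variance by exploiting the independence of symbol multiplicities. For the bias, I would write $\Uht - \U = \sum_i \prev{i} h_i - \sum_x \indic_{\mul{x}=0}\indic_{\mul{x}'>0}$ and take expectations term by term. Using $\EE[\prev{i}] = \sum_x e^{-\lambda_x}\lambda_x^i/i!$ and swapping the order of summation, $\EE[\sum_i \prev{i} h_i] = \sum_x \sum_{i\ge 1} e^{-\lambda_x}\frac{\lambda_x^i}{i!} h_i = \sum_x e^{-\lambda_x} h(\lambda_x)$, since $h(y) = \sum_{i\ge1} h_i y^i/i!$ by definition. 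On the other side, $\EE[\U] = \sum_x e^{-\lambda_x}(1-e^{-t\lambda_x})$ as already computed in the preliminaries. Subtracting gives the claimed formula. The only care needed is justifying the interchange of summation and expectation, which is routine under the standing assumptions (e.g.\ absolute convergence, or truncating the series and passing to the limit).

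For the variance, the idea is to decompose both $\Uht$ and $\U$ into per-symbol contributions and use independence. Write $\Uht - \U = \sum_x \big( g_x(\mul{x}) - \indic_{\mul{x}=0}\indic_{\mul{x}'>0}\big)$ where $g_x(j) \ed \sum_{i\ge1} \indic_{j=i} h_i = h_j$ (setting $h_0=0$); note each summand depends only on the pair $(\mul{x},\mul{x}')$, and these pairs are independent across $x$ by the Poissonization property. Hence $\Var(\Uht-\U) = \sum_x \Var\big(h_{\mul{x}} - \indic_{\mul{x}=0}\indic_{\mul{x}'>0}\big)$. Now I would use the elementary bound $\Var(A-B)\le 2\Var(A)+2\Var(B)$ — but that loses a factor of $2$ relative to the stated bound, so instead I would be more careful: since $h_{\mul{x}}$ and $\indic_{\mul{x}=0}\indic_{\mul{x}'>0}$ are \emph{supported on disjoint events} (the indicator is nonzero only when $\mul{x}=0$, where $h_{\mul{x}}=h_0=0$), their sum $h_{\mul{x}} - \indic_{\mul{x}=0}\indic_{\mul{x}'>0}$ has $\Var \le \EE[(h_{\mul{x}} - \indic_{\mul{x}=0}\indic_{\mul{x}'>0})^2] = \EE[h_{\mul{x}}^2] + \EE[\indic_{\mul{x}=0}\indic_{\mul{x}'>0}]$ by orthogonality. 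Summing over $x$: the first term is $\sum_x \EE[h_{\mul{x}}^2] = \sum_x \sum_{i\ge1} h_i^2 \Pr(\mul{x}=i) \le \sup_{i\ge1} h_i^2 \cdot \sum_x \Pr(\mul{x}\ge1) = \sup_{i\ge1} h_i^2 \cdot \EE[\prev{+}]$, and the second term is $\sum_x \EE[\indic_{\mul{x}=0}\indic_{\mul{x}'>0}] = \EE[\U]$. This yields exactly the claimed inequality.

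The main obstacle is the variance bound rather than the bias: naively applying $\Var(A-B)\le 2(\Var A + \Var B)$ would give an extra constant, so the key observation to extract is that the estimator's coefficient $h_0$ is zero, making the "seen" contribution $h_{\mul{x}}$ and the "newly discovered" indicator $\indic_{\mul{x}=0}\indic_{\mul{x}'>0}$ live on complementary events and hence be uncorrelated (in fact their product is identically zero). Once that is noticed, bounding $\EE[h_{\mul{x}}^2]$ by $\sup_i h_i^2 \cdot \Pr(\mul{x}\ge1)$ and recognizing $\sum_x\Pr(\mul{x}\ge1)=\EE[\prev{+}]$ is immediate, and recognizing the cross term as $\EE[\U]$ uses the per-symbol formula for $\EE[\U]$ from the preliminaries. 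I would present the bias computation first as a short display-style chain of equalities, then the variance bound as a second short chain, flagging the orthogonality step explicitly.
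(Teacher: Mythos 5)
Your proof is correct and follows essentially the same route as the paper: decompose $\Uht - \U$ into per-symbol terms, use the independence of the pairs $(\mul{x},\mul{x}')$ under Poissonization to sum per-symbol variances, bound each variance by its second moment, and kill the cross term by observing that $h_0=0$ makes $h_{\mul{x}}$ and $\indic_{\mul{x}=0}\indic_{\mul{x}'>0}$ supported on disjoint events. The orthogonality observation you flag explicitly is exactly the paper's step $(a)$, stated there more tersely.
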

\begin{proof}
Note that
\begin{align}
\Uht - \Ut
& = \sum^\infty_{i=1} \prev{i} h_i -  \sum_x \indic_{\mul{x} = 0} \cdot \indic_{\mul{x}' > 0}\nonumber \\
& = \sum^\infty_{i=1}
  \sum_{x} \indic_{\mul{x}=i} \cdot h_i - \sum_x \indic_{\mul{x} = 0}
  \cdot  \indic_{\mul{x}' > 0}
\nonumber \\
& = \sum_x \left(\sum^\infty_{i=1} \indic_{\mul{x}=i} \cdot h_i -
  \indic_{\mul{x} = 0} \cdot  \indic_{\mul{x}' > 0} \right).
\nonumber
\end{align}
For every symbol $x$,
\begin{align}
\EE \left[\sum^\infty_{i=1} \indic_{\mul{x}=i} \cdot h_i -
  \indic_{\mul{x} = 0} \cdot  \indic_{\mul{x}' > 0} \right] 
& = 
 \sum^\infty_{i=1} e^{-\lambda_x}
\frac{\lambda^i_x}{i!} \cdot h_i - e^{-\lambda_x} \cdot (1 -
e^{-t\lambda_x})  \nonumber \\ 
& = e^{-\lambda_x} \left(
\sum^\infty_{i=1} \frac{\lambda^i_x h_i}{i!} - (1-e^{-t\lambda_x})
\right) \nonumber \\ 
& =  e^{-\lambda_x} \left( h(\lambda_x) -
(1-e^{-t\lambda_x}) \right),
\nonumber
\end{align}
from which \prettyref{eq:Upoisson} follows.
\ignore{
The bias of such an estimator is 
\begin{align*}
\EE[\Uht - \U] & = \EE\left[\sum^\infty_{i=1} \prev{i} h_i
  \right] - \EE\left[ \sum_x \indic_{\mul{x} = 0} \cdot \left(1 -
  e^{-t\lambda_x} \right)\right] \\ 
& = \EE\left[\sum^\infty_{i=1}
  \sum_{x} \indic_{\mul{x}=i} \cdot h_i - \sum_x \indic_{\mul{x} = 0}
  \cdot \left(1 - e^{-t\lambda_x} \right)\right] \\ 
& = \sum_{x}
\EE\left[\sum^\infty_{i=1} \indic_{\mul{x}=i} \cdot h_i -
  \indic_{\mul{x} = 0} \cdot \left(1 - e^{-t\lambda_x} \right)\right]
\\ & = \sum_{x} \left( \sum^\infty_{i=1} e^{-\lambda_x}
\frac{\lambda^i_x}{i!} \cdot h_i - e^{-\lambda_x} \cdot \left(1 -
e^{-t\lambda_x} \right)\right) \\ 
& = \sum_{x} e^{-\lambda_x} \left(
\sum^\infty_{i=1} \frac{\lambda^i_x h_i}{i!} - (1-e^{-t\lambda_x})
\right) \\ 
& = \sum_{x} e^{-\lambda_x} \left( h(\lambda_x) -
(1-e^{-t\lambda_x}) \right).
\end{align*}
}%
\ignore{
For every symbol $x$,
\begin{align*}
\Var \left(
\sum^\infty_{i=1} \indic_{\mul{x}=i} \cdot h_i - \indic_{\mul{x} = 0}
\cdot \left(1 - e^{-t\lambda_x} \right) \right) 
& \leq \EE
\left[ \left( \sum^\infty_{i=1} \indic_{\mul{x}=i} \cdot h_i -
  \indic_{\mul{x} = 0} \cdot \left(1 - e^{-t\lambda_x} \right)
  \right)^2 \right] \\ 
& \stackrel{(a)}{=}  \EE \left[ \sum^\infty_{i=1}
  \indic_{\mul{x}=i}h_i^2 \right] + \EE[\indic_{\mul{x} = 0}] \cdot
\left(1 - e^{-t\lambda_x} \right)^2 \\ 
& \leq \sum^\infty_{i=1}  \EE
[  \indic_{\mul{x}=i}]h_i^2 +
\EE[\indic_{\mul{x} = 0}] \cdot \left(1 - e^{-t\lambda_x} \right).
\end{align*}
For every $i \neq j$, $\EE[ \indic_{\mul{x}=i}  \indic_{\mul{x}=j}] = 0$ and hence $(a)$. 
}%
For the variance, observe that for every symbol $x$,
\begin{align*}
\Var \left(
\sum^\infty_{i=1} \indic_{\mul{x}=i} \cdot h_i - \indic_{\mul{x} = 0}
\cdot \indic_{\mul{x}'>0} \right) 
& \leq \EE
\left[ \left( \sum^\infty_{i=1} \indic_{\mul{x}=i} \cdot h_i -
  \indic_{\mul{x} = 0} \cdot \indic_{\mul{x}' >0}
  \right)^2 \right] \\ 
& \stackrel{(a)}{=}  \EE \left[ \sum^\infty_{i=1}
  \indic_{\mul{x}=i}h_i^2 \right] + \EE[\indic_{\mul{x} = 0}] \cdot
 \EE[\indic_{\mul{x}' >0}] \\ 
& = \sum^\infty_{i=1}  \EE
[  \indic_{\mul{x}=i}]\cdot h_i^2 +
\EE[\indic_{\mul{x} = 0}] \cdot  \EE[\indic_{\mul{x}' >0}],
\end{align*}
where $(a)$ follows as for every $i \neq j$, $\EE[ \indic_{\mul{x}=i}  \indic_{\mul{x}=j}] = 0$. 
Since the variance of a sum of independent random variables is the sum of variances,
\begin{align*}
\Var(\Uht - \U) 
& \leq  \sum_{x}   
\sum^\infty_{i=1}\EE[  \indic_{\mul{x}=i} ]h_i^2 +
\sum_x \EE[\indic_{\mul{x}=0}] \cdot  \EE[\indic_{\mul{x}' >0}]\\
& =
\sum^\infty_{i=1} \EE[\prev{i}] \cdot h_i^2 + \EE[\U] \\ 
& \leq
\EE[\prev{+}] \cdot \sup_{i \geq 1} h^2_i + \EE[U].
\qedhere
\end{align*}
\ignore{\begin{align*}
\Var(\Uht - \U) & = \Var \left( \sum^\infty_{i=1} \sum_{x}
\indic_{\mul{x}=i} \cdot h_i - \sum_x \indic_{\mul{x} = 0} \cdot
(1 - e^{-t\lambda_x})\right) \\
 & = \sum_x \Var \left(
\sum^\infty_{i=1} \indic_{\mul{x}=i} \cdot h_i - \indic_{\mul{x} = 0}
\cdot \left(1 - e^{-t\lambda_x} \right) \right) \\ 
& \leq \sum_x \EE
\left[ \left( \sum^\infty_{i=1} \indic_{\mul{x}=i} \cdot h_i -
  \indic_{\mul{x} = 0} \cdot \left(1 - e^{-t\lambda_x} \right)
  \right)^2 \right] \\ 
& = \sum_x \left( \EE \left[ \sum^\infty_{i=1}
  \indic_{\mul{x}=i} \right]h_i^2 + \EE[\indic_{\mul{x} = 0}] \cdot
\left(1 - e^{-t\lambda_x} \right)^2 \right)\\ 
& \leq \sum_x \left( \EE
\left[ \sum^\infty_{i=1} \indic_{\mul{x}=i} \right]h_i^2 +
\EE[\indic_{\mul{x} = 0}] \cdot t\lambda_x \right)\\ 
& =
\sum^\infty_{i=1} \EE[\prev{i}] \cdot h_i^2 + \EE[\prev{1}] t \\ 
& =
&\leq
\EE[\prev{+}] \cdot \left(t + \sup_{i \geq 1} \frac{h^2_i}{i} \right).
\qedhere 
\end{align*}
}
\end{proof}
\prettyref{lem:general_bounds} enables us to reduce the estimation
problem to a task on approximating functions. Specifically, in view of \eqref{eq:Upoisson}, the goal is to approximate $1-e^{-yt}$ by a
function $h(y)$ whose derivatives at zero all have small magnitude.

\subsection{Estimation via polynomial approximation and support size estimation}
\label{sec:support}
Approximation-theoretic techniques for estimating norms and other
properties such as support size and entropy have been successfully used in the
statistics literature. For example, estimating the $L_p$ norms in Gaussian models
\cite{LNS99,CL11} and  estimating entropy~\cite{WY14,JVHW15} and
support size~\cite{WY14b} of discrete distributions.
Among the aforementioned
problems, support size estimation is closest to ours. Hence, we now
discuss the difference between the approximation technique we use and the
those used for support size estimation.

The support size of a discrete distribution $p$ is 
\begin{equation}
S(p) = \sum_x \indic_{p_x > 0}.
	\label{eq:Sp}
\end{equation}
At the first glance, estimating $S(p)$ may
appear similar to species estimation problem as one can convert a
support size estimator $\suppest$ to $\hat U$ by
\[
\hat{U} = \suppest - \sum^\infty_{i=1} \prev{i}.
\]
However, without any assumption on the distribution it is impossible to estimate the support size. 
For example, regardless how many samples are collected, there could be infinitely many symbols
with arbitrarily small probabilities that will never be observed.  A common assumption is therefore
that the minimum non-zero probability of the underlying distribution $p$, denoted by $p^+_{\min}$, 
is at least $1/k$, for some known $k$. Under this assumption~\cite{VV11} used a linear programming estimator similar to the one in \cite{ET76}, to estimate the support size within an additive error of $k \epsilon$ with constant probability using 
$\Omega(\frac{k}{\log k} \frac{1}{\epsilon^2})$ samples. Based on {best polynomial approximations} recently \cite{WY14b} showed that the minimax risk
of support size estimation satisfies
\[
\min_{\suppest} \max_{p: p^+_{\min} \geq 1/k} \EE_p
    [(\suppest-\supp(p))^2] = k^2 \exp \left(- \Theta\pth{\max \sth{
    \sqrt{\frac{k\log k}{n}}, \frac{k}{n}, 1}} \right)
\]
and that the optimal sample complexity of for estimating $S(p)$ within an additive error of $k \epsilon$ with constant probability is in fact $\Theta(\frac{k}{\log k} \log^2 \frac{1}{\epsilon})$.
Note that the assumption $p^+_{\min} \geq 1/k$ is crucial for this result to hold for otherwise estimation is impossible; in contrast, as we
show later, for species estimation no such assumptions are necessary. The intuition is that if there exist a large number of very improbable symbols, most likely they will not appear in the new samples anyway.

To estimate the support size, in view of \prettyref{eq:Sp} and the assumption $p^+_{\min} \geq 1/k$, the technique of \cite{WY14b} is to approximate the indicator function $y\mapsto\indic_{y \geq 1/k}$ in the
range $\{0\} \cup [1/k, \log k/n]$ using Chebyshev polynomials.  
Since by assumption no $p_x$ lies in $(0,\frac{1}{k})$, the approximation error in this interval is irrelevant. For example, in \arxiv{Figure~\ref{fig:fsa}}{(\prettyref{fig:fsa})}, the red curve is a useful
  approximation for the support size, even though it behaves badly over
  $(0,1/k)$. To estimate the average number of unseen symbols $\U$, in view of \prettyref{eq:Upoisson}, we need to approximate $y\mapsto 1-e^{-yt}$ over the entire
  $[0,\infty)$ as in, \eg \arxiv{Figure~\ref{fig:fsb}}{(\prettyref{fig:fsb})}.
\begin{figure}[ht]
\centering
\subfigure[]{\label{fig:fsa}
\arxiv{\includegraphics[scale = 0.4]{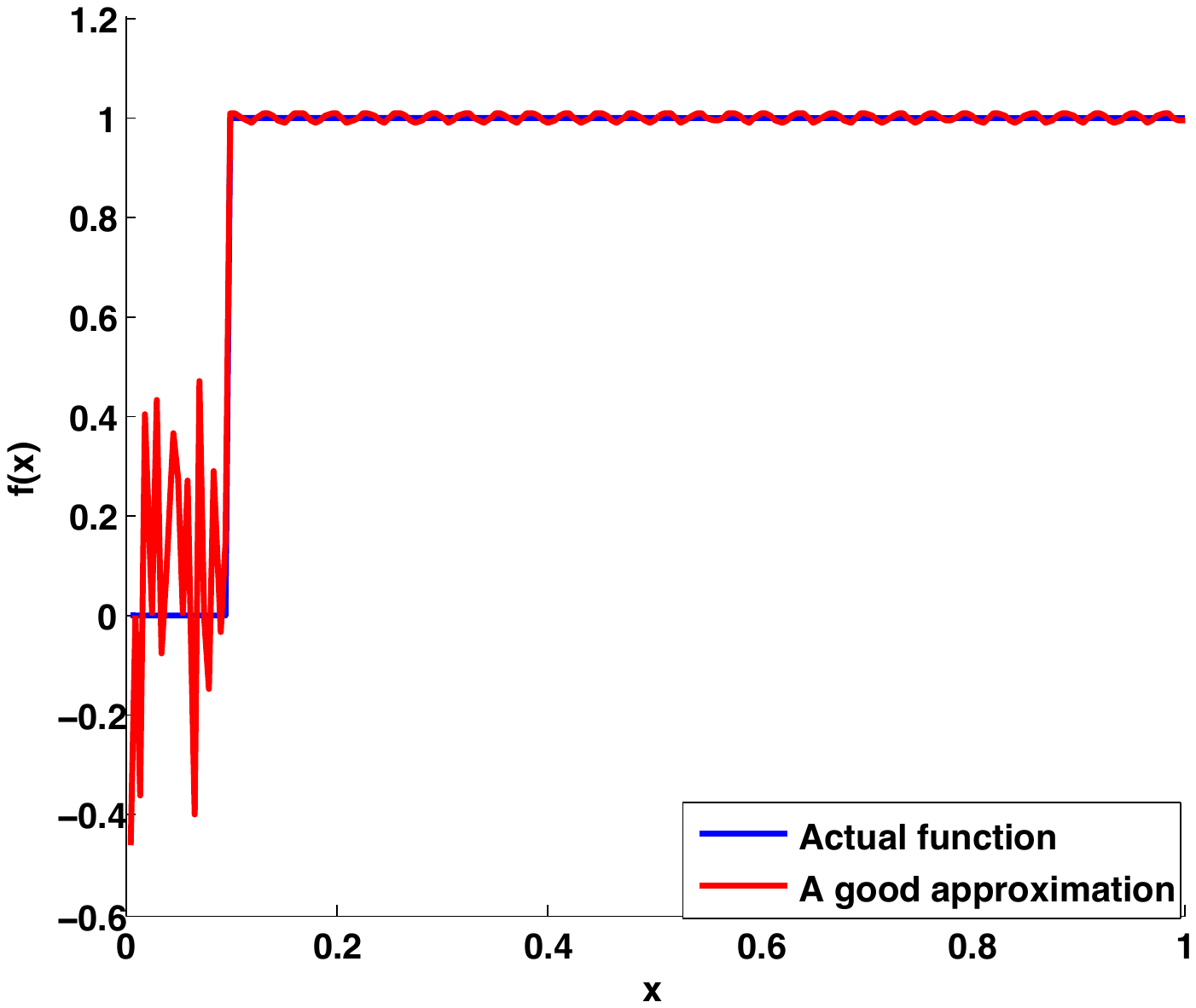}}{\includegraphics[scale = 0.4]{suppfigure.pdf}}}
\subfigure[]{\label{fig:fsb}
\arxiv{\includegraphics[scale = 0.4]{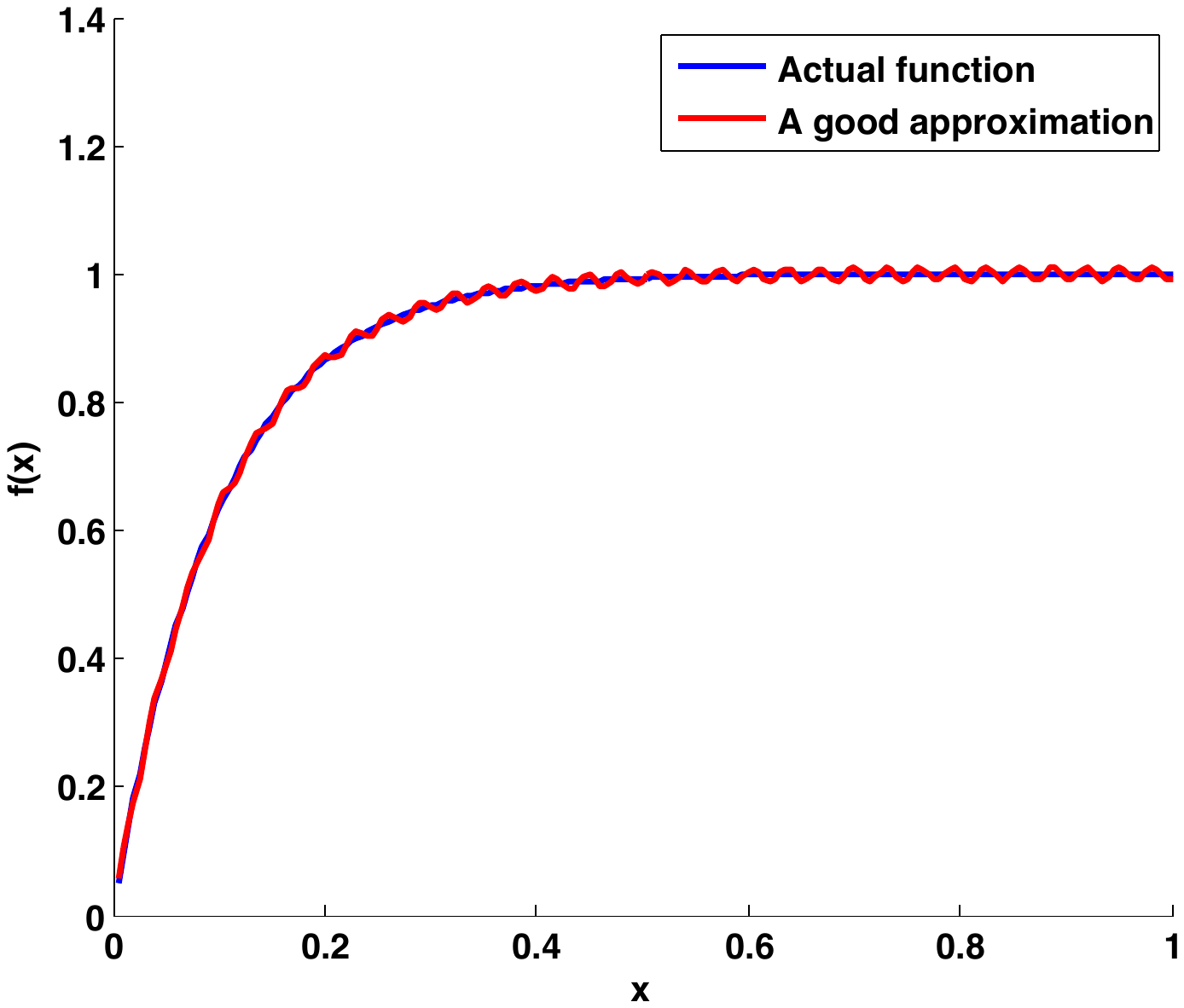}}{\includegraphics[scale = 0.4]{specifigure.pdf}}}
\caption{(a) a good approximation for support size; (b) a good
  approximation for species estimation.}
\end{figure}
Concurrent to this work,~\cite{VV15} proposed a linear programming algorithm to
estimate $U$.  However, their NMSE is $\cO(\frac{t}{\log n})$ compared
to the optimal result $\cO(n^{-1/t})$ in \prettyref{thm:res_main}, thus exponentially weaker for $t =
o(\log n)$. Furthermore, the computational cost far exceeds those of 
our linear estimators.
\section{Results for the Poisson model}
\label{sec:upper}
In this section, we provide the performance guarantee for SGT estimators under the Poisson sampling model.  We
first show that the truncated GT estimators incurs a high bias.  We
then introduce the class of smoothed GT estimators 
obtained by averaging several truncated GT estimators and bound their
mean squared error in Theorem~\ref{thm:general_bound} for an arbitrary smoothing distribution. We then apply
this result to obtain NMSE bounds for Poisson and Binomial smoothing
in Corollaries~\ref{cor:poi_tail} and~\ref{cor:bin_tail}
respectively, which imply the main result (Theorem~\ref{thm:res_main})
announced in \prettyref{sec:main} for the Poisson model.

\subsection{Why truncated Good-Toulmin does not work}
\label{sec:taylor}
Before we discuss the SGT estimator, we first show that the naive approach of truncating the GT estimator described in \prettyref{sec:SGT}
leads to bad performance when $t>1$.
Recall from \prettyref{lem:general_bounds}
that designing a good linear estimator boils to approximating $1-e^{-yt}$ by an analytic function $h(y)=\sum_{i\geq 1} \frac{h_i}{i!} y^i$ such
that all its derivatives at zero are small, namely, $\sup_{i\geq 1} |h_i|$ is small. The GT estimator 
corresponds to the perfect approximation
\[
h^{\scriptscriptstyle\rm GT}(y) = 1-e^{-yt};
\]
however, $\sup_{i\geq 1} |h_i| = \max(t,t^\infty)$, which is infinity if $t > 1$ and leads to large variance.
To avoid this situation, a natural approach is to use use the $\ell$-term Taylor expansion of $1-e^{-yt}$ at $0$, namely,
\begin{equation}
h^\fixed(y) = - \sum^{\fixed}_{i=1} \frac{(-yt)^i}{i!},
	\label{eq:h-ell}
	\end{equation}
	which corresponds to the estimator $U^\ell$ defined in \prettyref{eq:UGT-ell}.
Then $\sup_{i \geq 1} |h_i| = t^{\fixed}$ and, by
Lemma~\ref{lem:general_bounds}, the variance is at most $n (t^{\fixed}
+ t)$. Hence if ${\fixed} \leq \log_t m$, the variance is at most
$n(m+t)$.  However, note that the $\fixed$-term Taylor approximation is a degree-$\ell$ polynomial which eventually diverges and 
deviates from $1-e^{-yt}$ as $y$ increases, thereby incurring a large bias. \arxiv{Figure~\ref{fig:taylor}}{(Fig. S2a)} illustrates this phenomenon by plotting the function $1-e^{-yt}$ and its Taylor expansion
with $5,10,$ and $20$ terms. 
\begin{figure}[ht]
\centering
\subfigure[]{\label{fig:taylor}
\arxiv{\includegraphics[scale=0.4]{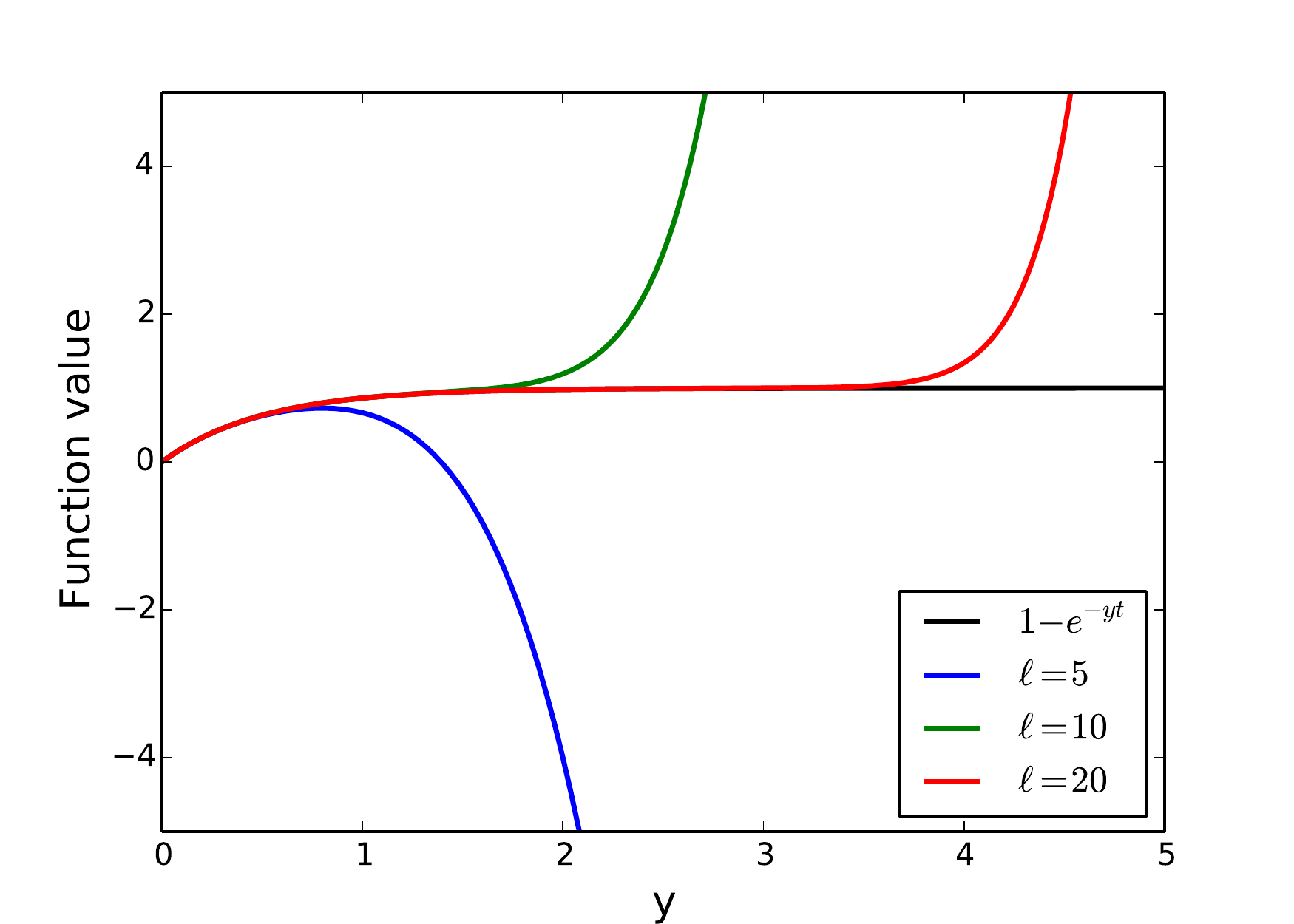}}{\includegraphics[scale=0.4]{Taylor.pdf}}}
\subfigure[.]{\label{fig:average_taylor}
\arxiv{\includegraphics[scale=0.4]{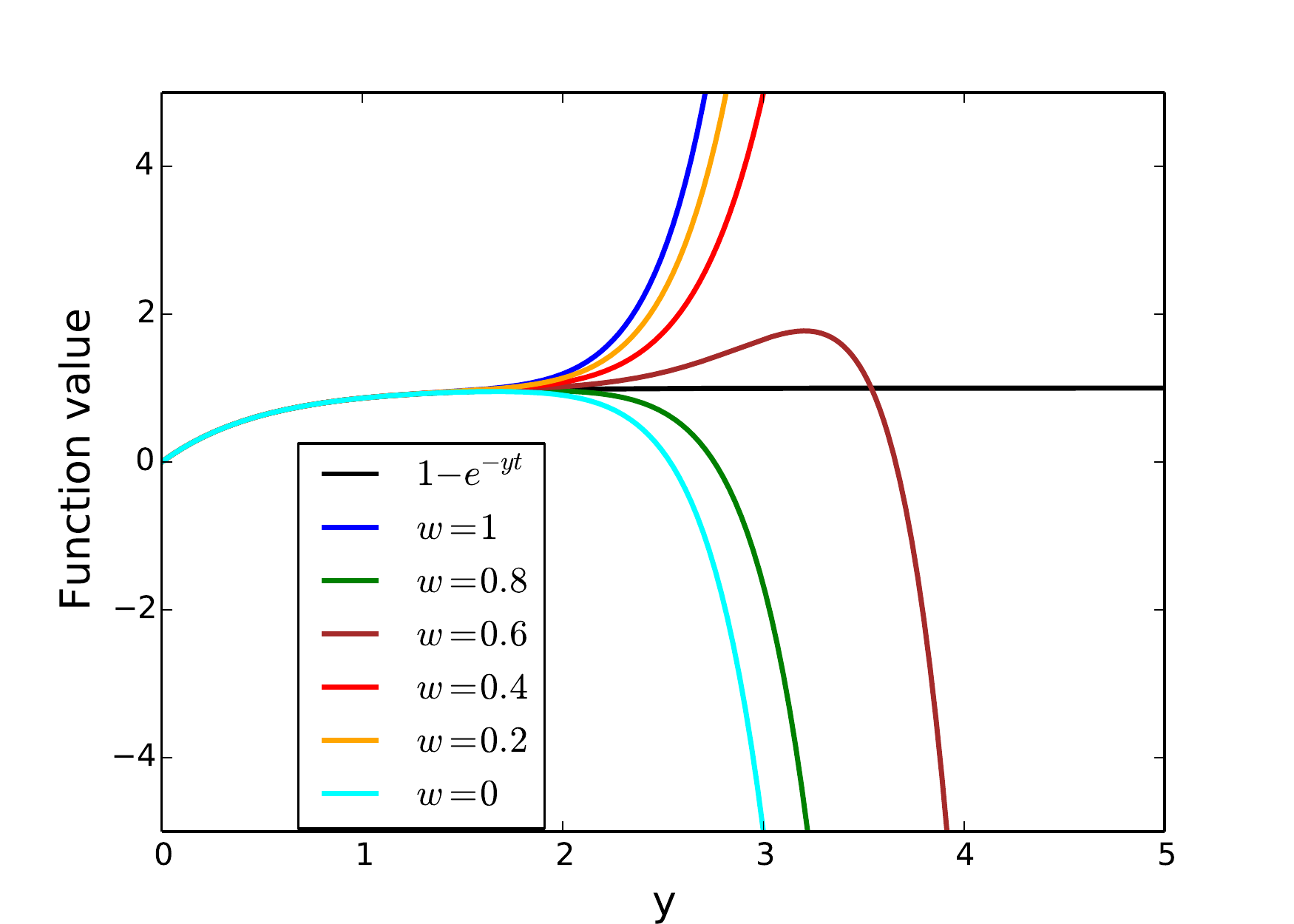}}{\includegraphics[scale=0.4]{Average_Taylor.pdf}}}
\caption{(a) Taylor approximation for $t = 2$, (b) Averages of $10$ and $11$ term Taylor approximation $t = 2$. }
\label{fig:taylor_all}
\end{figure}
Indeed, the next result (proved in \prettyref{app:ell}) rigorously shows that the NMSE of truncated GT estimator never vanishes:
\begin{Lemma}
\label{lmm:UGT-ell}	
There exist a constant $c > 0$ such that for any $\ell \geq 0$, any $t>1$ and any $n\in\naturals$,
	\[
	\LElnt \geq \frac{c(t-1)^5}{ t^4}.
	\]
\end{Lemma}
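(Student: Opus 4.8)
The plan is to lower-bound $\LElnt$ by exhibiting, for each pair $(\fixed,t)$, a single bad distribution, and the cleanest choice is a \emph{flat} one: let $p$ put mass $1/N$ on each of $N$ symbols, so that every $\lambda_x$ equals a common value $\lambda^*=n/N$ (treating $N=n/\lambda^*$ as an integer, since rounding perturbs the normalized bias and variance by only $O(1/(nt))$). For $\U^\fixed$ the coefficients are $h_i=-(-t)^i\indic_{i\le\fixed}$, and redoing the per-symbol computation in the proof of \prettyref{lem:general_bounds} — keeping equalities via $\indic_{\mul{x}=i}\indic_{\mul{x}=j}=0$, $\indic^2=\indic$, and the fact that $\sum_{i\ge1}\indic_{\mul{x}=i}h_i$ lives on $\{\mul{x}\ge1\}$, disjoint from $\{\mul{x}=0\}$ — gives, for a flat $p$,
\[
\EE_p\big[(\U^\fixed-\U)^2\big]\ \ge\ \max\left\{\,N e^{-\lambda^*}\sum_{i=1}^{\fixed}\frac{(t^2\lambda^*)^i}{i!}\;,\;\Big(N e^{-\lambda^*}\rho_\fixed(t\lambda^*)\Big)^2\right\},
\]
where $\rho_\fixed(z)\ed\sum_{i\ge\fixed+1}\tfrac{(-z)^i}{i!}=e^{-z}-\sum_{i=0}^{\fixed}\tfrac{(-z)^i}{i!}$ is the order-$\fixed$ Taylor remainder of $1-e^{-z}$, and the second entry is precisely the squared bias from \prettyref{eq:Upoisson}. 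Dividing by $(nt)^2$ and using $N=n/\lambda^*$, the task reduces to choosing the scale $\lambda^*\in(0,n]$ to inflate either the normalized bias $\tfrac{e^{-\lambda^*}}{t\lambda^*}|\rho_\fixed(t\lambda^*)|$ or the normalized variance $\tfrac{e^{-\lambda^*}}{nt^2\lambda^*}\sum_{i\le\fixed}\tfrac{(t^2\lambda^*)^i}{i!}$.

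For the bias I would cut the Good--Toulmin series essentially at the leading surviving term of its remainder: take $\lambda^*$ with $t\lambda^*=\fixed+1$ (available whenever $N=\tfrac{nt}{\fixed+1}\ge1$). Since $z^i/i!$ is non-increasing in $i$ for $i\ge\fixed+1$ when $z=\fixed+1$, the alternating-series estimate gives $|\rho_\fixed(\fixed+1)|\ge\tfrac{(\fixed+1)^{\fixed+1}}{(\fixed+1)!}\cdot\tfrac1{\fixed+2}$, and Stirling ($k^k/k!\ge e^k/(e\sqrt k)$) promotes this to $\tfrac{|\mathrm{bias}|}{nt}\gtrsim\tfrac{e^{(\fixed+1)(1-1/t)}}{(\fixed+1)^{5/2}}=\phi(\fixed+1)$ with $\phi(a)=e^{a(1-1/t)}/a^{5/2}$. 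The minimum of $\phi$ over $a\ge1$ sits at $a=\tfrac{5t}{2(t-1)}$ with value $\asymp\big(\tfrac{t-1}{t}\big)^{5/2}$, so this lower bound holds for \emph{every} integer $\fixed$ and yields $\LElnt\gtrsim\big(\tfrac{t-1}{t}\big)^{5}$. To reach the stated $\tfrac{(t-1)^5}{t^4}$ one introduces a scale factor $\alpha$, taking $t\lambda^*=\alpha(\fixed+1)$ with $\alpha\asymp t$: then all terms $z^i/i!$ with $i\le\fixed$ are increasing, $|\rho_\fixed(z)|$ is bounded below by $(t(\fixed+1))^{\fixed}/\fixed!$, and optimizing the exponent $1-\alpha/t+\ln\alpha$ over $\alpha>1$ (maximum at $\alpha=t$) gives a strictly sharper estimate that delivers $\tfrac{(t-1)^5}{t^4}$.

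When $\fixed+1>nt$ the series is truncated before it can reach the term that drives $\rho_\fixed$, so the bias construction is no longer available and one instead feeds the variance entry: take $\lambda^*=\min\{\fixed/t^2,\,n\}$, so that $t^2\lambda^*\le\fixed$ and the partial sum $\sum_{i\le\fixed}(t^2\lambda^*)^i/i!$ still contains the mode of $\exp(t^2\lambda^*)$ and is therefore a constant fraction of it; plugging this into the first entry of the display gives $\LElnt\gtrsim e^{\lambda^*(t^2-1)}/(n t^2\lambda^*)$, which — since $t^2\lambda^*\gtrsim nt$ in this regime — is exponentially large in $n$ for fixed $t>1$ and comfortably dominates $\tfrac{(t-1)^5}{t^4}$. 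The two trivial truncation levels are handled directly: $\U^0\equiv0$ has NMSE tending to $1$ on a flat $p$ with $\lambda^*\to0$, and for $\U^1$ the bias step already gives $\LElnt\gtrsim\big(\tfrac{t-1}{t}\big)^2$. The crux — and the step I expect to be the main obstacle — is the estimate of the Taylor remainder $\rho_\fixed$ at the critical argument $z\asymp\fixed$, together with the optimization of $\phi$ (and of its $\alpha$-scaled variant) certifying that the lower bound never degenerates wherever the integer $\fixed$ lands; a lesser nuisance is dovetailing the bias construction ($\fixed+1\le nt$) with the variance construction ($\fixed+1>nt$) so that the final $\tfrac{(t-1)^5}{t^4}$ bound is uniform in both $n$ and $\fixed$.
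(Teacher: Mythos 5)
Your strategy --- take a flat distribution with common rate $\lambda^*$, write the bias as $N e^{-\lambda^*}\rho_\ell(t\lambda^*)$ with $\rho_\ell$ the order-$\ell$ Taylor remainder of $1-e^{-z}$, lower-bound $|\rho_\ell|$ by an alternating-series estimate, and then optimize the scale $\lambda^*$ --- is exactly the paper's; the paper simply sets $\lambda^*=\ell+1$ from the outset, which is your $\alpha=t$. Your first variant with $t\lambda^*=\ell+1$ is carried out correctly (the tail of $\rho_\ell$ at $z=\ell+1$ does have non-increasing terms, so the $\tfrac{1}{\ell+2}$ in your estimate is right) and gives the weaker $((t-1)/t)^5$, and you correctly identify $\alpha\asymp t$ as the fix.

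The gap is in the $\alpha$-scaled remainder estimate. Once $z=\alpha(\ell+1)>\ell+1$ the tail of $\rho_\ell(z)$ no longer has monotone terms, so one bounds the retained partial sum instead: $-\rho_\ell(z)=(1-e^{-z})+\sum_{i=1}^\ell(-z)^i/i!$, and pairing adjacent terms of this alternating sum (whose magnitudes $a_i=z^i/i!$ increase for $i\le\ell<z$, $\ell$ even) gives $-\rho_\ell(z)\ge a_\ell-a_{\ell-1}=a_\ell\bigl(1-\tfrac{\ell}{z}\bigr)$, \emph{not} $a_\ell$. Your stated bound $|\rho_\ell(z)|\ge (t(\ell+1))^\ell/\ell!=a_\ell$ is false --- already at $\ell=2$, $z=3t$, one has $|\rho_2(3t)|=\bigl|1-e^{-3t}-3t+\tfrac{9t^2}{2}\bigr|<\tfrac{9t^2}{2}=a_2$ for $3t>1$. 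The missing factor $1-\ell/z\ge 1-1/\alpha=(t-1)/t$ at $\alpha=t$ is not cosmetic: it is the extra power of $t-1$ the paper carries through, giving $|\text{bias}|/(nt)\gtrsim\tfrac{t-1}{t^2}\cdot\tfrac{t^\ell}{\ell^{3/2}}$, whose minimum over even $\ell\ge 2$ is $\gtrsim(t-1)^{5/2}/t^2$ and hence NMSE $\gtrsim(t-1)^5/t^4$; starting from your estimate, that exponent simply does not follow. Lesser points: with $\lambda^*\asymp\ell+1$ the construction needs $\ell\lesssim n$ symbols (not $\ell\lesssim nt$); and for odd $\ell$ the last retained term $(-z)^\ell/\ell!$ is negative, so the pairing must be done from the other end --- the paper defers this to ``a similar argument,'' so your explicit attention to the variance regime and to $\ell\in\{0,1\}$ is addressing a real omission, even if your threshold and the resulting bounds there are not yet aligned with the stated $(t-1)^5/t^4$.
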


\subsection{Smoothing by random truncation}
\label{sec:smooth}
As we saw in the previous section, the $\fixed$-term Taylor 
approximation, where all the coefficients after the $\ell\Th$ term
are set to zero results in large bias. Instead, one can choose a weighted average of several Taylor
series approximations, whose biases cancel each other leading to significant bias reduction. 
For example, in \arxiv{Figure~\ref{fig:average_taylor}}{(Fig. S2b)},
 we plot
\[
w  h^{10} + (1-w) h^{11}
\]
for various values of $w\in[0,1]$. Notice that the weight $w=0.6$ leads to better approximation of $1-e^{-yt}$ than both $h^{10}$ and $h^{11}$.

A natural generalization of the above argument entails taking the weighted average of various Taylor approximations
with respect to a given probability distribution over $\Zplus
\ed \{0,1,2,\ldots\}$. For a $\Zplus$-valued random variable $\rand$, consider the power series
\[
\hrand(y) = \sum^{\infty}_{\fixed=0} \Pr(\rand = \fixed) \cdot h^{\fixed}(y),
\]
where $h^\ell$ is defined in	\prettyref{eq:h-ell}.
Rearranging terms, we have
\[
\hrand(y) = \sum^\infty_{\fixed=0} \Pr(\rand =\fixed) \sum^{\fixed}_{i=1} \frac{-(-yt)^i}{i!}
= - \sum^\infty_{i=1}  \frac{(-yt)^i}{i!} \Pr(\rand \geq i).
\]
Thus, the linear estimator with coefficients
\ignore{
A natural way to reduce the bias
would be to use a $\rand$-term Taylor series approximation, where
$\rand$ is distributed according to some known distribution over $\Zplus
\ed \{0,1,2,\ldots\}$.  Let $\U'$ be the estimator whose coefficients
are given by the formal power series
\[
h' = -\sum^{\rand}_{i=1} \frac{(-yt)^i}{i!}.
\]
However, by Jensen's inequality,
\[
\EE[(\U' - \U)^2] \geq \EE[(\EE_\rand[\U'] - \U)^2].
\]
Thus, instead of using $\U'$, we can use $\EE_\rand[\U']$ which has a
smaller mean squared loss. Thus, we propose to use estimators of the
form $\Urand \ed \EE_\rand[\U']$, where $\rand$ is distributed
according to some known distribution.
\[
\hrand = \sum^\infty_{\fixed=0} \Pr(\rand =\fixed) \sum^{\fixed}_{i=1} \frac{-(-yt)^i}{i!}
= - \sum^\infty_{i=1}  \frac{(-yt)^i}{i!} \Pr(\rand \geq i).
\]
Thus, randomizing the cutoff point $L$ amounts to attenuating the coefficients of the GT estimator by the tail probability of $L$, resulting in a linear estimator with the following coefficients:
}%
\begin{equation}
	\hrand_i = -(-t)^i \prob{\rand \geq i},
	\label{eq:hrandi}
\end{equation}
is precisely the SGT estimator $\UL$ defined in \prettyref{eq:UL}. Special cases of smoothing distributions include:
\begin{itemize}
\item $L=\infty$: This corresponds to the original Good-Toulmin estimator \prettyref{eq:GT} without smoothing;
	\item $L=\ell$ deterministically: This leads to the estimator $\U^\ell$ in \prettyref{eq:UGT-ell} corresponding to the $\ell$-term Taylor approximation;
\item $L\sim \Binom(\binlmt, 1/(1+t))$: This recovers the Efron-Thisted estimator \prettyref{eq:UET}, where $\binlmt$ is a tuning parameter to be chosen.
\end{itemize}
\begin{figure}[ht]
\centering
\subfigure[]{\label{fig:errors}\includegraphics[scale=0.5]{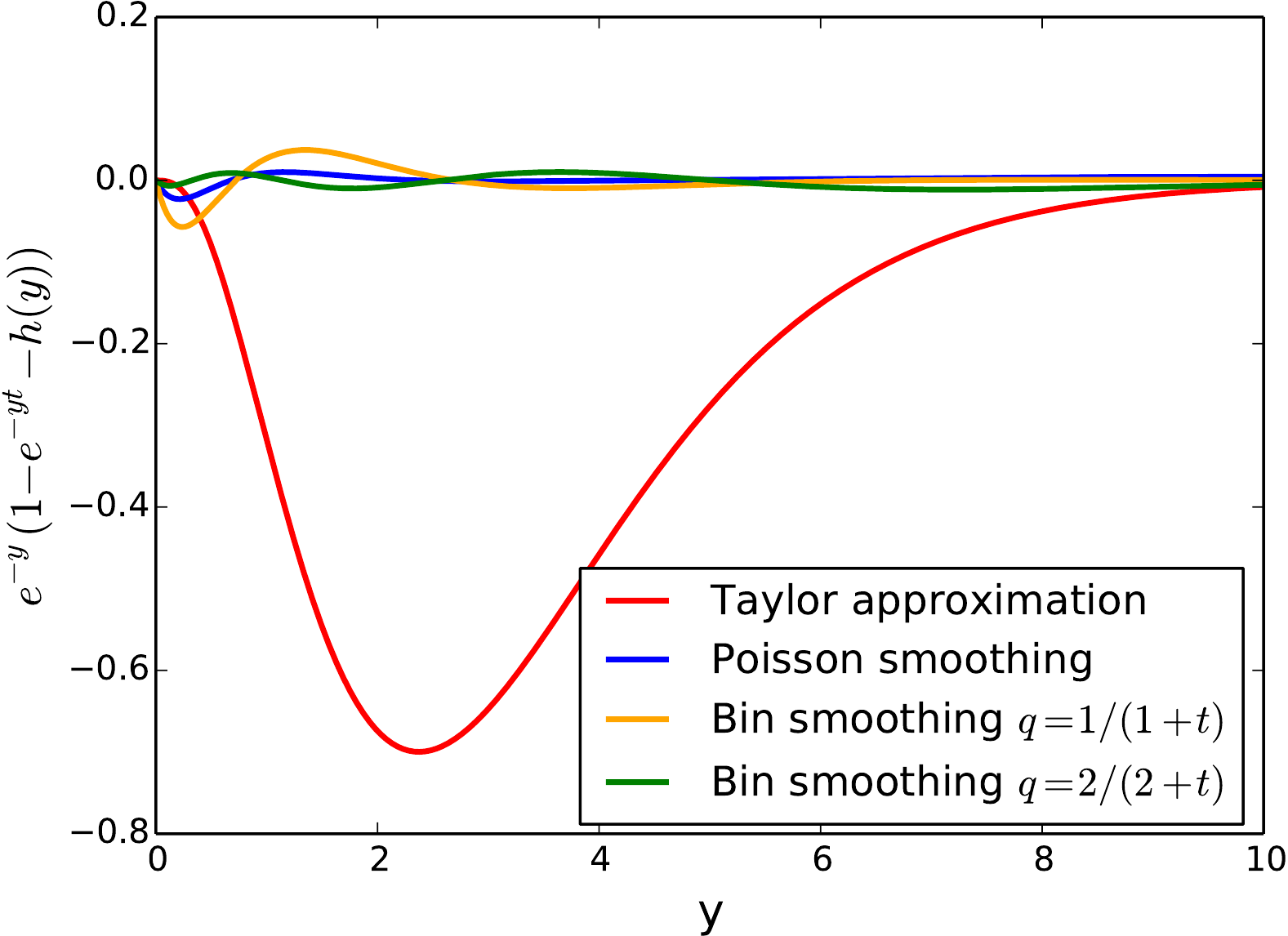}}
\subfigure[]{\label{fig:coeff}\includegraphics[scale=0.5]{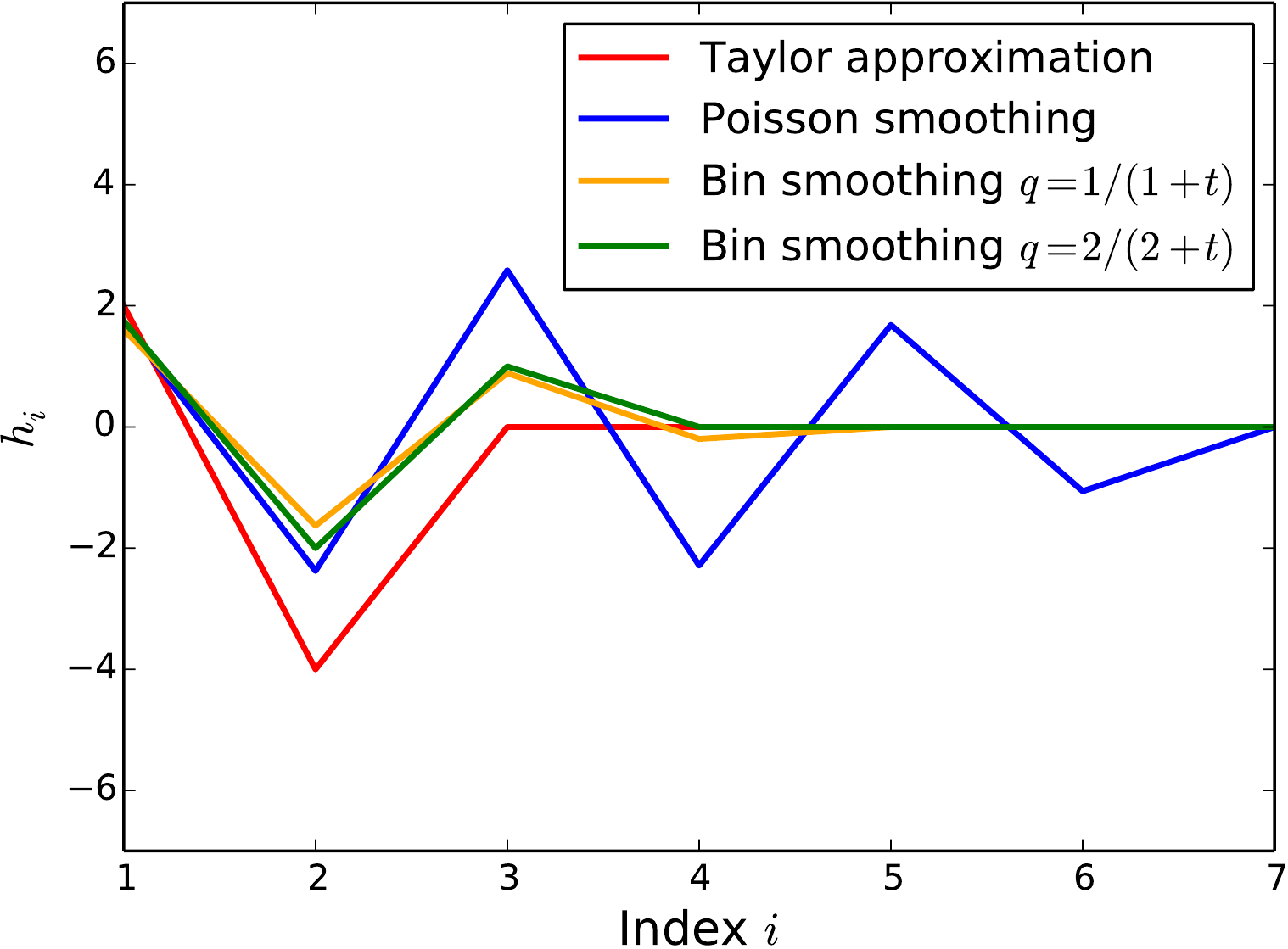}}
\caption{Comparisons of approximations of $\hL(\cdot)$ with  $\EE[\rand] = 2$ and $t=2$. (a) $e^{-y}(1-e^{-yt} - \hL(y))$ as a function of $y$. (b) Coefficients $\hL_i$ as a function of index $i$.}
\end{figure}
We study the performance of linear
estimators corresponding to the Poisson smoothing and the Binomial smoothing. To this
end, we first systematically upper bound the bias and variance for any
probability smoothing $\rand$.  We plot the error that corresponds to each
smoothing in 
\arxiv{Figure~\ref{fig:errors}}{(Fig. S3a)}. Notice that the Poisson and binomial
smoothings have significantly small error compared to the Taylor series
approximation. The coefficients of the resulting estimator is plotted
in 
\arxiv{Figure~\ref{fig:coeff}}{(Fig. S3b)}. It is easy so see that the maximum absolute
value of the coefficient is higher for the Taylor series approximation
compared to the Poisson or binomial smoothings.
\begin{Lemma}
For a random variable $\rand$ over $\Zplus$ and $t \geq 1$, 
\[
\Var(\Urand-\U) \leq \EE[\prev{+}] \cdot  {\EE}^2[t^{L}] + \EE[\U].
\]
\end{Lemma}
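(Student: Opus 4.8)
The plan is to reduce this to the general variance bound already established in \prettyref{lem:general_bounds}. That lemma tells us that for any linear estimator $\Uht = \sum_{i \geq 1} \prev{i} h_i$,
\[
\Var(\Uht - \U) \leq \EE[\prev{+}] \cdot \sup_{i \geq 1} h_i^2 + \EE[\U].
\]
Since $\Urand$ is exactly the linear estimator whose coefficients are $\hrand_i = -(-t)^i \prob{\rand \geq i}$ (see \prettyref{eq:hrandi}), it therefore suffices to show that
\[
\sup_{i \geq 1} (\hrand_i)^2 = \sup_{i \geq 1} t^{2i} \prob{\rand \geq i}^2 \leq \EE^2[t^L],
\]
or equivalently, taking square roots, that $\sup_{i \geq 1} t^i \prob{\rand \geq i} \leq \EE[t^L]$.

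The key (and essentially only) step is this last inequality, and here is where $t \geq 1$ enters. For any fixed $i \geq 1$, since $t \geq 1$ we have $t^\ell \geq t^i$ for every $\ell \geq i$, so
\[
\EE[t^L] = \sum_{\ell \geq 0} \prob{L = \ell} \, t^\ell \geq \sum_{\ell \geq i} \prob{L = \ell}\, t^\ell \geq t^i \sum_{\ell \geq i} \prob{L = \ell} = t^i \prob{L \geq i}.
\]
Taking the supremum over $i \geq 1$ gives $\sup_{i \geq 1} t^i \prob{\rand \geq i} \leq \EE[t^L]$, and squaring yields $\sup_{i \geq 1}(\hrand_i)^2 \leq \EE^2[t^L]$. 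Substituting this into the bound from \prettyref{lem:general_bounds} gives
\[
\Var(\Urand - \U) \leq \EE[\prev{+}] \cdot \EE^2[t^L] + \EE[\U],
\]
which is the claim.

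There is no real obstacle here: the lemma is a direct corollary of \prettyref{lem:general_bounds} once one observes that the tail probabilities $\prob{L \geq i}$ are precisely what damp the exponential factor $t^i$, and that for $t \geq 1$ the worst-case coefficient is controlled by the moment generating function $\EE[t^L]$ evaluated at the single point $t$. The only thing to be careful about is the restriction $t \geq 1$, which is exactly what makes $\ell \mapsto t^\ell$ nondecreasing and hence justifies the step $t^\ell \geq t^i$ for $\ell \geq i$; for $t < 1$ the inequality would go the other way and the bound would instead be trivial since $\sup_i t^i \prob{L \geq i} \leq 1$. I would present the argument in the order above: first invoke \prettyref{lem:general_bounds}, then prove the one-line bound $t^i \prob{L \geq i} \leq \EE[t^L]$, then conclude.
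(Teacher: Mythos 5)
Your proposal is correct and matches the paper's proof essentially line for line: both invoke \prettyref{lem:general_bounds} to reduce the task to bounding $\sup_{i\geq 1}|\hrand_i|$, and both use $t\geq 1$ to argue $t^i\,\prob{\rand\geq i}=t^i\sum_{j\geq i}\prob{\rand=j}\leq\sum_{j\geq i}t^j\prob{\rand=j}\leq\EE[t^\rand]$. Nothing to add.
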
 
\begin{proof}
By Lemma~\ref{lem:general_bounds}, to bound the variance it suffices to bound the highest coefficient in $\hrand$. 
\begin{equation}
\label{eq:coeff_rand}
|\hrand_i| \leq   t^i  \Pr(\rand \geq i ) = t^i \sum^\infty_{j=i}
\Pr(\rand =j) \leq   \sum^\infty_{j=i}
\Pr(\rand =j)  t^j  \leq \EE[t^\rand].
\end{equation}
The above bound together with Lemma~\ref{lem:general_bounds} yields the result.
\end{proof}
To bound the bias, we need few definitions. Let 
\begin{equation}
  g(y) \ed - \sum_{i=1}^\infty \frac{\prob{\rand \geq i}}{i!} (-y)^i.
	\label{eq:EGF}
\end{equation}
Under this definition, $\hrand(y) = g(yt)$. 
We use the following auxiliary lemma to bound the bias. 
\begin{Lemma}
	\label{lem:Delta}
	For any random variable $\rand$ over $\Zplus$, 
\begin{equation*}
	g(y) - (1-e^{-y}) = -e^{-y} \int_0^y \EE \left[{\frac{(-s)^\rand}{\rand!}}\right] e^s \diff s.
\end{equation*}
\end{Lemma}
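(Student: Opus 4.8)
The plan is to recognize that both sides of the claimed identity solve the same first‑order linear ODE. Set $\psi(y)\ed\EE\left[\frac{(-s)^\rand}{\rand!}\right]\big|_{s=y}$, i.e. $\psi(y)=\EE\left[\frac{(-y)^\rand}{\rand!}\right]$, and $\phi(y)\ed g(y)-(1-e^{-y})$, so that the assertion reads $\phi(y)=-e^{-y}\int_0^y\psi(s)e^s\,\diff s$. I will prove that $\phi$ solves the initial‑value problem
\[
\phi'(y)+\phi(y)=-\psi(y),\qquad \phi(0)=0 .
\]
Granting this, multiplying through by the integrating factor $e^y$ gives $\bigl(e^y\phi(y)\bigr)'=-e^y\psi(y)$, and integrating from $0$ to $y$ using $\phi(0)=0$ yields exactly the claimed formula.

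The initial condition is immediate, since $g(0)=0$ and $1-e^{0}=0$. For the differential equation, note that because $\prob{\rand\ge i}\le 1$ the power series defining $g$ in \prettyref{eq:EGF} has infinite radius of convergence and may be differentiated term by term; re‑indexing the resulting series gives
\[
g'(y)=\sum_{j\ge 0}\frac{\prob{\rand\ge j+1}}{j!}(-y)^j .
\]
Adding $g(y)=-\sum_{i\ge 1}\frac{\prob{\rand\ge i}}{i!}(-y)^i$ and using the telescoping identity $\prob{\rand\ge j+1}-\prob{\rand\ge j}=-\prob{\rand=j}$, the $j=0$ term of $g'$ survives as $\prob{\rand\ge 1}$ while the remaining terms collapse, so
\[
g'(y)+g(y)=\prob{\rand\ge1}-\sum_{j\ge 1}\frac{\prob{\rand=j}}{j!}(-y)^j
= 1-\EE\left[\frac{(-y)^\rand}{\rand!}\right]=1-\psi(y).
\]
Since $\phi(y)=g(y)-1+e^{-y}$, this gives $\phi'(y)+\phi(y)=\bigl(g'(y)+g(y)\bigr)-1=-\psi(y)$, as required, and the integrating‑factor step above finishes the proof.

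The remaining points are routine analytic bookkeeping rather than genuine obstacles: the series for $\psi$ likewise has infinite radius of convergence, so $s\mapsto\psi(s)e^s$ is continuous and the integral is well defined, and the term‑by‑term differentiation together with the interchange of summation and integration implicit in the integrating‑factor step is justified by uniform convergence on compact intervals. (One could instead expand both sides directly as power series in $y$ and match coefficients, integrating $\int_0^y s^\ell e^s\,\diff s$ by parts, but this route is messier; the ODE argument is cleaner.) The only real idea is spotting that $g$ satisfies $g'+g=1-\psi$; everything else is mechanical.
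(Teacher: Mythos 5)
Your proof is correct, but it takes a genuinely different route from the paper's. The paper works from the series side: it writes $g(y)-(1-e^{-y})=\sum_{i\ge1}\frac{\prob{L<i}}{i!}(-y)^i$, expands $\prob{L<i}=\sum_{j<i}\prob{L=j}$, swaps sums by Fubini, invokes the incomplete-gamma identity $\sum_{i>j}\frac{z^i}{i!}=\frac{e^z}{j!}\int_0^z\tau^j e^{-\tau}\,\diff\tau$ to package the inner sum as an integral, and then interchanges sum and integral once more and substitutes $\tau=-s$. You instead observe that $\phi\ed g-(1-e^{-y})$ solves the IVP $\phi'+\phi=-\psi$, $\phi(0)=0$, with $\psi(y)=\EE[(-y)^L/L!]$, which the integrating factor $e^y$ solves immediately. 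I checked the key identity $g'+g=1-\psi$ via the reindexing and telescoping $\prob{L\ge j+1}-\prob{L\ge j}=-\prob{L=j}$, and it holds, as does the elementary $\phi'+\phi=(g'+g)-1$. Your approach is arguably cleaner: it sidesteps the Fubini/incomplete-gamma bookkeeping, and the convergence justifications (both entire series dominated by $e^{|y|}$, so termwise differentiation and the integral are unproblematic) are lighter than the double sum–integral interchanges the paper needs. It is also worth noting that the paper itself uses exactly your ODE-plus-integrating-factor device later, in the proof of \prettyref{lem:hyper-equal} for the hypergeometric model, so you have in effect unified the two lemmas' methods; the paper's direct series computation for \prettyref{lem:Delta} is the odd one out.
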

\begin{proof}
Subtracting 
\prettyref{eq:EGF} from the Taylor series expansion of $1-e^{-y}$
, 
	\begin{align*}
g(y) - (1-e^{-y})
= & ~ \sum_{i=1}^\infty \frac{\prob{\rand < i}}{i!} (-y)^i	\\
= & ~ \sum_{i=1}^\infty \sum_{j=0}^{i-1} \frac{(-y)^i}{i!} \prob{\rand = j}	\\
= & ~ \sum_{j=0}^\infty \pth{\sum_{i=j+1}^{\infty} \frac{(-y)^i}{i!} } \prob{\rand = j}.
\end{align*}
Note that $\sum_{i=j+1}^{\infty} \frac{z^i}{i!}$ can be expressed (via incomplete Gamma function) as
\[
\sum_{i=j+1}^{\infty} \frac{z^i}{i!} = \frac{e^{z}}{j!} \int_0^z \tau^j e^{-\tau} \diff \tau.
\]
Thus by Fubini's theorem,
	\begin{align*}
g(y) - (1-e^{-y})
= & ~ \sum_{j=0}^\infty \frac{e^{-y}}{j!} \int_0^{-y} \tau^j e^{-\tau} \diff \tau \prob{\rand = j}	\\
= & ~ e^{-y} \int_0^{-y} e^{-\tau}\diff  \tau \pth{\sum_{j=0}^\infty \frac{\tau^j}{j!}    \prob{\rand = j}}	\\
= & ~ -e^{-y} \int_0^{y} e^{s} \diff  s \pth{\sum_{j=0}^\infty \frac{(-s)^j}{j!}    \prob{\rand = j}}	\\
= & ~ -e^{-y} \int_0^y \EE \left[{\frac{(-s)^\rand}{\rand!}}\right] e^s \diff s. \qedhere
\end{align*}
\end{proof}
To bound the bias, we need one more definition. For a random variable $\rand$ over  $\Zplus$, let
\[
\DLt \ed  \max_{0 \leq s < \infty} \left  \lvert \EE \left[{\frac{(-s)^\rand}{\rand!}}\right] \right \rvert e^{-s/t},
\]
\begin{Lemma}
\label{lem:rand_bias}
For a random variable $\rand$ over $\Zplus$,
\[
|\EE[\Urand - \U]| \leq (\EE[\prev{+}] + \EE[\U])\cdot \DLt.
\]
\end{Lemma}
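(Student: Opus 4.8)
The plan is to assemble two identities already in hand: the exact bias formula of \prettyref{lem:general_bounds} specialized to the SGT coefficients, and the integral representation of \prettyref{lem:Delta}; the bound then follows by controlling the integrand with the very definition of $\DLt$ and summing. First, recall that $\Urand$ is the linear estimator with coefficients \prettyref{eq:hrandi}, and that its generating function satisfies $\hrand(y) = g(yt)$ with $g$ as in \prettyref{eq:EGF}. Hence \prettyref{lem:general_bounds} gives
\[
\EE[\Urand - \U] = \sum_x e^{-\lambda_x}\bigl( g(t\lambda_x) - (1 - e^{-t\lambda_x}) \bigr),
\]
so it is enough to bound $\lvert g(t\lambda_x) - (1 - e^{-t\lambda_x}) \rvert$ for each symbol $x$ and then sum.

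Next I would apply \prettyref{lem:Delta} with $y = t\lambda_x \ge 0$:
\[
g(t\lambda_x) - (1 - e^{-t\lambda_x}) = -e^{-t\lambda_x}\int_0^{t\lambda_x} \EE\Bigl[\tfrac{(-s)^\rand}{\rand!}\Bigr] e^s \diff s .
\]
Since the integration variable $s$ runs over $[0, t\lambda_x] \subseteq [0,\infty)$, the definition of $\DLt$ yields the pointwise estimate $\bigl\lvert \EE[(-s)^\rand/\rand!]\bigr\rvert \le \DLt\, e^{s/t}$. Pulling $\DLt$ out through the triangle inequality for integrals and evaluating the elementary exponential integral,
\[
\int_0^{t\lambda_x} e^{s/t} e^s \diff s = \int_0^{t\lambda_x} e^{(1+1/t)s}\diff s = \frac{t}{t+1}\bigl(e^{(t+1)\lambda_x} - 1\bigr),
\]
and multiplying back by $e^{-t\lambda_x}$ gives $\bigl\lvert g(t\lambda_x) - (1 - e^{-t\lambda_x}) \bigr\rvert \le \DLt\,\tfrac{t}{t+1}\bigl(e^{\lambda_x} - e^{-t\lambda_x}\bigr)$.

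Finally I would sum over $x$, absorbing the $e^{-\lambda_x}$ prefactor:
\[
\lvert\EE[\Urand - \U]\rvert \le \DLt\,\frac{t}{t+1}\sum_x e^{-\lambda_x}\bigl(e^{\lambda_x} - e^{-t\lambda_x}\bigr) = \DLt\,\frac{t}{t+1}\sum_x \bigl(1 - e^{-(t+1)\lambda_x}\bigr),
\]
and then use the telescoping identity $1 - e^{-(t+1)\lambda_x} = (1 - e^{-\lambda_x}) + e^{-\lambda_x}(1 - e^{-t\lambda_x})$ together with the Poisson moment formulas $\EE[\prev{+}] = \sum_x (1 - e^{-\lambda_x})$ and $\EE[\U] = \sum_x e^{-\lambda_x}(1 - e^{-t\lambda_x})$ to conclude that $\sum_x (1 - e^{-(t+1)\lambda_x}) = \EE[\prev{+}] + \EE[\U]$; since $t/(t+1) \le 1$, the claimed bound follows.

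There is no deep obstacle here: the statement drops out once the earlier identities are in place. The only point demanding care is the bookkeeping of absolute values, so that $\DLt$ — defined as a supremum of $\lvert \EE[(-s)^\rand/\rand!] \rvert e^{-s/t}$ over all $s \ge 0$ — may legitimately be extracted from the integral. This is exactly why the weight $e^{-s/t}$ appears in the definition of $\DLt$: it is precisely matched to the factor $e^{s}$ in the integral representation so that the product integrates to something proportional to $e^{(t+1)\lambda_x}$, which in turn recombines (after the $e^{-\lambda_x}$ factor) into $\EE[\prev{+}] + \EE[\U]$.
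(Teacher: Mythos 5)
Your proof is correct and follows essentially the same route as the paper's: same starting identity from \prettyref{lem:general_bounds}, same integral representation from \prettyref{lem:Delta}, and the same recombination $\sum_x(1-e^{-(t+1)\lambda_x})=\EE[\prev{+}]+\EE[\U]$ at the end. The only difference is cosmetic: you insert the bound $\lvert\EE[(-s)^\rand/\rand!]\rvert\le\DLt\,e^{s/t}$ directly under the integral and integrate $e^{(1+1/t)s}$ exactly (picking up a harmless extra factor $t/(t+1)\le1$), whereas the paper first pulls out a constant $\max_{s\le y}$ bound and recovers the $e^{-s/t}$ weight afterward by combining with the $e^{-\lambda_x}$ prefactor; both reduce to the same estimate.
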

\begin{proof}
By Lemma~\ref{lem:Delta},
\begin{align*}
|g(y) - (1-e^{-y})| 
& \leq e^{-y} \int_0^y \left  \lvert \EE \left[{\frac{(-s)^\rand}{\rand!}}\right] \right \rvert e^s \diff s \\
& \leq \max_{s \leq y}  \left \lvert \EE \left[{\frac{(-s)^\rand}{\rand!}}\right]  \right \rvert e^{-y} \int_0^y e^s \diff s \\
&=    \max_{s \leq y} \left  \lvert \EE \left[{\frac{(-s)^\rand}{\rand!}}\right] \right \rvert (1-e^{-y})   .
\end{align*}
For a symbol $x$, 
\begin{align*}
e^{-\lambda_x} \left( \hrand(\lambda_x) - (1-e^{-\lambda_x t}) \right)
= e^{-\lambda_x} \left( g(\lambda_xt) - (1-e^{-\lambda_x t}) \right).
\end{align*}
Hence,
\begin{align*}
\lvert e^{-\lambda_x} \left( \hrand(\lambda_x) - 1-e^{-\lambda_x t} \right) \rvert
& \leq  (1-e^{-\lambda_xt}) \max_{0 \leq y \leq \infty} e^{-y} \max_{0 \leq s \leq yt} \left  \lvert \EE \left[{\frac{(-s)^\rand}{\rand!}}\right] \right \rvert \\
& \leq  (1-e^{-\lambda_xt}) \max_{0 \leq s \leq \infty} \left  \lvert \EE \left[{\frac{(-s)^\rand}{\rand!}}\right] \right \rvert e^{-s/t}.
\end{align*}
The lemma follows by summing over all the symbols and substituting
$\sum_x 1-e^{-\lambda_xt} \leq \sum_x 1-e^{-\lambda_x(t+1)} =  \EE[\prev{+}] + \EE[\U]$.
\end{proof}
The above two lemmas yield our main result.
\begin{Theorem}
\label{thm:general_bound}
For any random variable $\rand$ over $\Zplus$ and $t \geq 1$,
\[
\EE[(\Urand - \U)^2] \leq \EE[\prev{+}] \cdot {\EE}^2[t^{L}] + \EE[\U]
+  (\EE[\prev{+}] + \EE[\U] )^2  \DLt^2.
\]
\end{Theorem}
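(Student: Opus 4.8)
The plan is to obtain Theorem~\ref{thm:general_bound} as a direct consequence of the two preceding lemmas via the standard bias--variance (Pythagorean) decomposition of mean-squared error. Writing $Y \ed \Urand - \U$, we have the identity
\[
\EE[(\Urand - \U)^2] = \Var(\Urand - \U) + \pth{\EE[\Urand - \U]}^2,
\]
so it suffices to bound the variance term and the squared-bias term separately, and each has already been controlled above.

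For the variance term I would invoke the variance bound proved above, namely $\Var(\Urand-\U) \leq \EE[\prev{+}] \cdot {\EE}^2[t^{L}] + \EE[\U]$, which itself followed from the general linear-estimator estimate of Lemma~\ref{lem:general_bounds} together with the coefficient inequality $|\hrand_i| \le \EE[t^\rand]$ in \prettyref{eq:coeff_rand}. For the squared-bias term I would apply Lemma~\ref{lem:rand_bias}, which gives $|\EE[\Urand - \U]| \leq (\EE[\prev{+}] + \EE[\U])\cdot \DLt$; squaring both sides yields $\pth{\EE[\Urand - \U]}^2 \leq (\EE[\prev{+}] + \EE[\U])^2 \DLt^2$. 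Adding the two bounds produces
\[
\EE[(\Urand - \U)^2] \leq \EE[\prev{+}] \cdot {\EE}^2[t^{L}] + \EE[\U] + (\EE[\prev{+}] + \EE[\U])^2 \DLt^2,
\]
which is exactly the claimed inequality.

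The only point requiring a word of care is the legitimacy of the decomposition: the identity $\EE[Y^2] = \Var(Y) + (\EE Y)^2$ is unconditional if one reads $\EE[Y^2]$ in $[0,\infty]$, and in the only regime of interest, $\EE[t^L] < \infty$ (otherwise the asserted bound is vacuous), the variance lemma already certifies $\EE[Y^2] < \infty$, so all quantities are finite and the algebra is valid. In short there is essentially no obstacle at this step — the substantive work, controlling the variance through the attenuated coefficients and controlling the bias through the integral representation of Lemma~\ref{lem:Delta} and the functional $\DLt$, was carried out in the lemmas, and Theorem~\ref{thm:general_bound} is merely their conjunction through the mean-squared-error identity. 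The genuinely delicate analysis comes afterward, when this bound is specialized to Poisson and binomial smoothing and one must estimate $\EE[t^L]$, $\DLt$, $\EE[\prev{+}]$, and $\EE[\U]$ in order to extract the rate $n^{-1/t}$ of Theorem~\ref{thm:res_main}.
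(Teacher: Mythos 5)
Your proposal is correct and matches the paper's argument: the paper simply states that ``the above two lemmas yield our main result,'' which is precisely the bias--variance decomposition you wrote out, combining the variance lemma and Lemma~\ref{lem:rand_bias}. Your added remark on finiteness is careful but not strictly needed, since if $\EE[t^L]=\infty$ the bound is vacuous, exactly as you note.
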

We have therefore reduced the problem of computing mean-squared loss,
to that of computing expectation of certain function of the random
variable. We now apply the above theorem for Binomial and Poisson
smoothings. Notice that the above bound is distribution dependent and can
be used to obtain stronger results for certain distributions. However,
in the rest of the paper, we concentrate on obtaining minimax
guarantees.

\subsection{Poisson smoothing}

\begin{Corollary}
\label{cor:poi_tail}
%
For $t \geq 1$, $\rand \sim \poi(r)$ with $r=  \frac{1}{2t} \log  \left(\frac{n(t+1)^2}{t-1} \right)$,
\[
\LELnt 
\leq \frac{c_t}{n^{1/t}},
\]
where $0 \leq c_t \leq 3$ and $\lim_{t \to\infty} c_t = 1$.
\end{Corollary}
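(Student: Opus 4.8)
The plan is to apply \prettyref{thm:general_bound} with $\rand\sim\poi(r)$ and then choose $r$ to balance its variance and bias contributions. Four quantities enter that bound. Two of them are controlled by crude, distribution-free estimates relying only on $\sum_x\lambda_x=n$: $\EE[\prev{+}]=\sum_x(1-e^{-\lambda_x})\le n$; $\EE[\U]\le\EE[M]=nt$; and, via the identity $\EE[\prev{+}]+\EE[\U]=\sum_x(1-e^{-(t+1)\lambda_x})$ already noted in the proof of \prettyref{lem:rand_bias} together with $1-e^{-u}\le u$, also $\EE[\prev{+}]+\EE[\U]\le(t+1)n$. The third quantity, $\EE[t^\rand]$, is simply the probability generating function of the Poisson law evaluated at $t$, so $\EE[t^\rand]=e^{r(t-1)}$.

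The crux is the quantity $\DLt$. Expanding the Poisson law term by term,
\[
\EE\!\left[\frac{(-s)^\rand}{\rand!}\right]
=\sum_{\ell\ge0}e^{-r}\frac{r^\ell}{\ell!}\cdot\frac{(-s)^\ell}{\ell!}
=e^{-r}\sum_{\ell\ge0}\frac{(-rs)^\ell}{(\ell!)^2}
=e^{-r}J_0\!\left(2\sqrt{rs}\,\right),
\]
where $J_0$ is the order-zero Bessel function of the first kind. The elementary bound $|J_0(x)|\le1$, immediate from $J_0(x)=\frac1\pi\int_0^\pi\cos(x\sin\theta)\,\diff\theta$, gives $\bigl|\EE[(-s)^\rand/\rand!]\bigr|\le e^{-r}$ for all $s\ge0$, and since also $e^{-s/t}\le1$ we conclude $\DLt\le e^{-r}$. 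I expect this to be the heart of the matter: it is precisely the \emph{oscillation} of the alternating series that keeps $\DLt$ exponentially small, whereas bounding it term by term in absolute value would replace $J_0$ by the modified Bessel function $I_0$, which grows like $e^{2\sqrt{rs}}$ and is far too weak to balance the variance.

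Feeding these estimates into \prettyref{thm:general_bound} yields
\[
\EE[(\Urand-\U)^2]\le n\,e^{2r(t-1)}+nt+(t+1)^2n^2\,e^{-2r}.
\]
Writing $A\ed(t+1)^2/(t-1)$, the prescribed choice is $r=\frac1{2t}\log(nA)$, so $e^{2r}=(nA)^{1/t}$. Dividing by $(nt)^2$, the first term becomes $A^{1-1/t}/(t^2n^{1/t})$, the third becomes $(t+1)^2A^{-1/t}/(t^2n^{1/t})=(t-1)A^{1-1/t}/(t^2n^{1/t})$ using $(t+1)^2=(t-1)A$, and these two sum to the clean expression $A^{1-1/t}/(t\,n^{1/t})$; the middle term is at most $1/(t\,n^{1/t})$ because $n^{1-1/t}\ge1$. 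Hence $\LELnt\le c_t/n^{1/t}$ with $c_t\le\frac1t\bigl(A^{1-1/t}+1\bigr)$.

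It then remains to verify the claimed properties of $c_t$, which is a routine one-variable estimate. Since $A=t+3+\frac4{t-1}\sim t$ and $A^{-1/t}=e^{-(\log A)/t}\to1$, we get $A^{1-1/t}\sim t$, hence $c_t\to1$ as $t\to\infty$. For the uniform bound it suffices to check $A^{1-1/t}\le 3t-1$ for all $t>1$: the factor $1/(t-1)$ makes $A\to\infty$ as $t\downarrow1$, but it is raised to the power $1-1/t\to0$, so $A^{1-1/t}\to1$ there, while for large $t$ the left side grows only linearly; this yields $c_t\le3$. Finally, at the boundary $t=1$ the formula gives $r=\infty$, i.e.\ $\Urand=\UGT$, whence $\EE[t^\rand]=1$ and $\DLt=0$, so the bound collapses to $\EE[(\UGT-\U)^2]\le\EE[\prev{+}]+\EE[\U]\le2n$, consistent with $c_1=2$.
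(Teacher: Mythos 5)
Your proof is correct and takes essentially the same route as the paper: apply \prettyref{thm:general_bound} with Poisson smoothing, compute $\EE[t^L]=e^{r(t-1)}$, recognize $\EE[(-s)^L/L!]=e^{-r}J_0(2\sqrt{rs})$ and bound $\DLt\le e^{-r}$ via $|J_0|\le1$, and then optimize $r$. Your constant $c_t=\frac1t\bigl(A^{1-1/t}+1\bigr)$ with $A=(t+1)^2/(t-1)$ is algebraically identical to the paper's expression $\frac{1}{t^{1/t}}\bigl(\frac{t(t-1)}{(t+1)^2}\bigr)^{(1-t)/t}+\frac1t$, so the two proofs agree term for term.
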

\begin{proof}
For $\rand \sim \poi(r)$,
\begin{equation}
\label{eq:poivar}
\Expect[t^L] = e^{-r} \sum^\infty_{\fixed=0} \frac{(r t)^{\fixed}}{\fixed!} = e^{r(t-1)}.
\end{equation}
Furthermore, 
\[
\EE \left[{\frac{(-s)^\rand}{\rand!}}\right] = e^{-r} \sum_{j=0}^\infty \frac{(-s r)^j}{(j!)^2} = e^{-r} J_0(2\sqrt{sr}),
\]
where $J_0$ is the Bessel function of first order which takes values in $[-1,1]$ cf.~\cite[9.1.60]{AS64}. Therefore
\begin{equation}
\label{eq:poibias}
\DLt \leq e^{-r}.
\end{equation}
Equations~\eqref{eq:poivar} and~\eqref{eq:poibias} together with Theorem~\ref{thm:general_bound} yields
\[
\Expect[(\UL - \U)^2]  \leq \Expect[\prev{+}] \cdot e^{2r(t-1)} + \Expect[\U]
+  (\Expect[\prev{+}]  + \Expect[U])^2  \cdot  e^{-2r}.
\]
Since $\Expect[\prev{+}]  \leq n$ and $\Expect[\U] \leq nt$,
\[
\Expect[(\UL - \U)^2]  \leq n e^{2r(t-1)} + nt +  (n+nt)^2 e^{-2r}.
\]
Choosing $r = \frac{1}{2t} \log \frac{n(t+1)^2}{t-1}$ yields
\[
\LELnt  \leq \frac{1}{(nt)^{1/t}} \cdot \left(\frac{t(t-1)}{(t+1)^2} \right)^{\frac{1-t}{t}} + \frac{1}{nt},
\]
and the lemma with $c_t \ed  \frac{1}{t^{1/t}} \cdot \left(\frac{t(t-1)}{(t+1)^2} \right)^{\frac{1-t}{t}} + \frac{1}{t}$.
\end{proof}
\subsection{Binomial smoothing}
We now prove the results when $\rand \sim \Binom(\binlmt,q)$. Our analysis holds
for all $q \in [0,2/(2+t)]$ and in this range, the performance of the estimator improves as $q$ increases, and hence the NMSE bounds are strongest for $q=2/(2+t)$.
Therefore, we consider binomial smoothing for two cases: the Efron-Thisted suggested value $q = 1/(1+t)$ and the optimized value $q = 2/(2+t)$.
\begin{Corollary}
\label{cor:bin_tail}
For $t \geq 1$ and $\rand \sim \Binom(\binlmt,q)$, if $\binlmt = \left \lceil \frac{1}{2} \log_2  \frac{nt^2}{t-1} \right \rceil \text{ and } q = \frac{1}{t+1}$,
then
\[
\LELnt \leq \frac{c_t}{n^{\log_2(1+1/t)}},
\]
where $c_t$ satisfies $0 \leq c_t \leq 6$ and $\lim_{t \to\infty} c_t = 1$; if $\binlmt = \left \lceil \frac{1}{2} \log_3  \frac{nt^2}{t-1} \right \rceil \text{ and } q = \frac{2}{t+2}$,
then
\[
	\LELnt \leq \frac{c'_t}{(nt)^{\log_3(1+2/t)}},
\]
where $c'_t$ satisfies $0 \leq c'_t \leq 6$ and $\lim_{t \to\infty} c'_t = 1$.
\ignore{
\begin{align*}
\binlmt = \left \lceil \frac{1}{2} \log_2  \frac{nt^2}{t-1} \right \rceil \text{ and } q = \frac{1}{t+1}
\Rightarrow  & ~ \LELnt \leq \frac{c_t}{n^{\log_2(1+1/t)}},	\\
\binlmt = \left \lceil \frac{1}{2} \log_3  \frac{nt^2}{t-1} \right \rceil \text{ and } q = \frac{2}{t+2} \Rightarrow   & ~ 	\LELnt \leq \frac{c'_t}{(nt)^{\log_3(1+2/t)}},
\end{align*}
where $c_t$ satisfies $c_t \leq 6$ for $t \geq 1$ and $\lim_{t \to \infty} c_t = 1$, and 
$c'_t $ satisfies $c'_t \leq 11$ for all $t \geq 1$ and $\lim_{t \to \infty} c'_t = 1$.
}
\end{Corollary}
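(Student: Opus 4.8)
The plan is to specialize Theorem~\ref{thm:general_bound} to $\rand\sim\Binom(\binlmt,q)$, which reduces the whole corollary to evaluating the two scalars $\EE[t^{\rand}]$ and $\DLt$ as functions of $(\binlmt,q)$, and then choosing $\binlmt$ so that the variance and bias contributions are balanced. The first scalar is immediate: $\EE[t^{\rand}]$ is the probability generating function of the binomial evaluated at $t$, so $\EE[t^{\rand}]=(1+q(t-1))^{\binlmt}$.

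The crux is the bound on $\DLt=\max_{s\ge0}\bigl|\EE[(-s)^{\rand}/\rand!]\bigr|\,e^{-s/t}$. Expanding the binomial and factoring out $(1-q)^{\binlmt}$,
\[
\EE\!\left[\frac{(-s)^{\rand}}{\rand!}\right]=\sum_{j=0}^{\binlmt}\binom{\binlmt}{j}q^{j}(1-q)^{\binlmt-j}\frac{(-s)^{j}}{j!}=(1-q)^{\binlmt}\sum_{j=0}^{\binlmt}\binom{\binlmt}{j}\frac{1}{j!}\Bigl(\tfrac{-qs}{1-q}\Bigr)^{j}=(1-q)^{\binlmt}L_{\binlmt}\!\Bigl(\tfrac{qs}{1-q}\Bigr),
\]
where $L_{\binlmt}$ denotes the degree-$\binlmt$ Laguerre polynomial; this is the binomial analogue of the identity $\EE[(-s)^{\rand}/\rand!]=e^{-r}J_{0}(2\sqrt{sr})$ used for Poisson smoothing in Corollary~\ref{cor:poi_tail}. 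I would then invoke the classical bound $|L_{\binlmt}(x)|\le e^{x/2}$, valid for all $x\ge0$ (e.g.\ \cite[22.14.12]{AS64}), the analogue of $|J_{0}|\le1$, to get
\[
\Bigl|\EE\!\left[\tfrac{(-s)^{\rand}}{\rand!}\right]\Bigr|e^{-s/t}\le(1-q)^{\binlmt}\exp\!\Bigl(\bigl(\tfrac{q}{2(1-q)}-\tfrac1t\bigr)s\Bigr).
\]
Exactly when $q\le\frac{2}{2+t}$ the coefficient of $s$ is nonpositive, the supremum over $s\ge0$ is attained at $s=0$, and hence $\DLt\le(1-q)^{\binlmt}$; this identifies the admissible range of $q$ stated in the corollary. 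Moreover $\EE[t^{\rand}]=(1+q(t-1))^{\binlmt}$ increases and the bound $(1-q)^{\binlmt}$ on $\DLt$ decreases in $q$, and writing $r=q/(1-q)$ one checks that the exponent of the balanced rate, $\frac{\log(1/(1-q))}{\log\bigl((1+q(t-1))/(1-q)\bigr)}=\frac{\log(1+r)}{\log(1+rt)}$, is increasing in $q$, which is why $q=\frac{2}{2+t}$ gives the faster rate.

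With these estimates, Theorem~\ref{thm:general_bound} together with $\EE[\prev{+}]\le n$, $\EE[\U]\le nt$ and $\EE[\prev{+}]+\EE[\U]\le n(1+t)$ (all from the Poisson identities of Section~\ref{sec:prelim}) gives, after dividing by $(nt)^{2}$,
\[
\LELnt\le\frac{(1+q(t-1))^{2\binlmt}}{nt^{2}}+\frac{1}{nt}+\frac{(1+t)^{2}}{t^{2}}(1-q)^{2\binlmt}.
\]
For $q=\frac{1}{1+t}$ we have $1+q(t-1)=\frac{2t}{t+1}$ and $1-q=\frac{t}{t+1}$, so the first term is $\frac{1}{nt^{2}}\bigl(\frac{2t}{t+1}\bigr)^{2\binlmt}$ and the third is $\frac{(1+t)^{2}}{t^{2}}\bigl(\frac{t}{t+1}\bigr)^{2\binlmt}$; choosing $\binlmt=\lceil\frac12\log_{2}\frac{nt^{2}}{t-1}\rceil$ makes $2^{2\binlmt}\asymp\frac{nt^{2}}{t-1}$, which equalizes these two terms up to constants and leaves each of order $n^{-\log_{2}(1+1/t)}$. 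The case $q=\frac{2}{2+t}$ is identical with $1+q(t-1)=\frac{3t}{t+2}$, $1-q=\frac{t}{t+2}$, the base $2$ replaced by $3$, and $\binlmt=\lceil\frac12\log_{3}\frac{nt^{2}}{t-1}\rceil$, giving $n^{-\log_{3}(1+2/t)}$. The rest is constant-chasing: the ceiling gives $2^{2\binlmt}<\frac{4nt^{2}}{t-1}$ (resp.\ $3^{2\binlmt}<\frac{9nt^{2}}{t-1}$), each of the three terms is then bounded by an explicit multiple of the target rate, and one checks the multiples sum to at most $6$; as $t\to\infty$ the first and middle terms are lower order while the bias prefactor $\frac{(1+t)^{2}}{t^{2}}$ and the $\bigl(\frac{t^{2}}{t-1}\bigr)^{-\log(1+\cdot)}$-type corrections all tend to $1$, so $c_{t}\to1$. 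The only genuine obstacle is the $\DLt$ bound: recognizing $\EE[(-s)^{\rand}/\rand!]$ as a rescaled Laguerre polynomial and controlling its growth on all of $[0,\infty)$ via $|L_{\binlmt}(x)|\le e^{x/2}$ is what simultaneously pins down the cutoff $q\le\frac{2}{2+t}$ and the exponent in the rate; everything afterward is substitution and bookkeeping.
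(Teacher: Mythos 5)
Your proposal follows the paper's proof essentially step by step: you compute $\EE[t^{\rand}]$ via the binomial PGF, recognize $\EE[(-s)^{\rand}/\rand!]$ as $(1-q)^{\binlmt}L_{\binlmt}(qs/(1-q))$, invoke the Laguerre bound $|L_{\binlmt}(x)|\le e^{x/2}$ to get $\DLt\le(1-q)^{\binlmt}$ exactly for $q\le 2/(2+t)$, and substitute into Theorem~\ref{thm:general_bound} with the same choice of $\binlmt$. The only additional content is your observation that $\log(1+r)/\log(1+rt)$ is increasing in $q$ (explaining why $q=2/(2+t)$ is optimal within the admissible range), which is a nice remark but does not change the argument; the constant-chasing for $c_t,c'_t$ is left as a sketch, but the paper does nothing more than bound $4^{\binlmt}<\frac{4nt^2}{t-1}$ (resp.\ $9^{\binlmt}<\frac{9nt^2}{t-1}$) and collect terms, so this is consistent.
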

\begin{proof}
If $\rand\sim \Binom(\binlmt,q)$, 
\[
\Expect[t^\rand] = \sum^\binlmt_{\fixed=0} {\binlmt \choose \fixed} (tq)^{\fixed}(1-q)^{\binlmt - \fixed} = (1+q(t-1))^{\binlmt}.
\]
Furthermore, 
\[
\EE \left[{\frac{(-s)^\rand}{\rand!}}  \right]
=  \sum_{j=0}^\binlmt \frac{(-s)^j}{j!} \binom{\binlmt}{j} (q)^j (1-q)^{\binlmt-j} = (1-q)^{\binlmt}L_\binlmt\pth{\frac{qs}{1-q}},
\]
where 
\begin{equation}
L_{\binlmt}(y) = 	\sum_{j=0}^\binlmt \frac{(-y)^j}{j!}  \binom{\binlmt}{j}
	\label{eq:lag}
\end{equation}
 is the Laguerre polynomial of degree $\binlmt$.
If $\frac{tq}{2(1-q)} \leq 1$, for any $s \geq 0$,
\[
e^{-\frac{s}{t}} \left \lvert \EE \left[{\frac{(-s)^\rand}{\rand!}}  \right] \right \rvert
\leq (1-q)^{\binlmt} e^{-\frac{s}{t}} e^{\frac{qs}{2(1-q)}}
\leq (1-q)^{\binlmt},
\]
where the second inequality follows from the fact cf.~\cite[22.14.12]{AS64} that 
for all $y \geq 0$ and all $k\geq 0$,
\begin{equation}
|L_k(y)| \leq e^{y/2}.	
	\label{eq:lagbound}
\end{equation}
Hence for $q\leq 2/(t+2)$,
\[
\Expect[(\UL - \U)^2]  \leq \Expect[\prev{+}] \cdot (1+q(t-1))^{2\binlmt} + \Expect[\U]
+  (\Expect[\prev{+}] +\Expect[U])^2  \cdot  (1-q)^{2\binlmt}.
\]
Since $\Expect[\U] \leq nt$ and $\Expect[\prev{+}] \leq n$,
\begin{equation}
\label{eq:gen_bin}
\Expect[(\UL - \U)^2]  \leq n\cdot (1+q(t-1))^{2\binlmt} + nt
+   (nt+n)^2  \cdot  (1-q)^{2\binlmt}.
\end{equation}
Substituting the Efron-Thisted suggested $q = \frac{1}{t+1}$ results in
\[
\LELnt \leq \left(\frac{2^{2\binlmt}}{nt^2} + \frac{(t+1)^2}{t^2}\right)\left( \frac{t}{t+1}\right)^{2\binlmt} + \frac{1}{nt}.
\] 
Choosing $k = \left \lceil \frac{1}{2}\log_2 \frac{nt^2}{t-1} \right \rceil $
yields the first result with $c_t \ed  \left(\frac{4}{t-1} + \left(\frac{t+1}{t}\right)^2 \right)  \cdot \left(\frac{t-1}{t^2} \right)^{\log_2(1+1/t)} + \frac{1}{t}$.
For the second result, substituting $q = \frac{2}{t+2}$ in \eqref{eq:gen_bin}
results in 
\[
\LELnt \leq \left(\frac{3^{2\binlmt}}{nt^2} + \frac{(t+1)^2}{t^2}\right)\left( \frac{t}{t+2}\right)^{2\binlmt} + \frac{1}{nt}.
\] 
Choosing $k =\left \lceil  \frac{1}{2} \log_3 \frac{nt^2}{t-1} \right \rceil $ yields the result with $c'_t \ed \left(\frac{9}{t-1} + \frac{(t+1)^2}{t^2} \right) \cdot \left(\frac{t-1}{t^2} \right)^{\log_3(1+2/t)} + \frac{1}{t}$ .
\end{proof}
\ignore{
\subsection{Results on the smoothed Good-Toulmin estimators}
Combining the bounds on bias and variance leads to the following upper bound on the normalized mean square loss:
\begin{Theorem}
\label{thm:rupper}
For any strictly positive $n, t = m/n$ and $r$, 
the proposed estimator satisfies
\[
\LJnm \leq e^{-2r}  + \frac{1}{nt^2}\max\sth{t^{2r},t^2,\frac{e^{2r(t-1)}}{\max\{1, \frac{8e}{27} rt\}}} + \frac{1}{nt}.
\]
\end{Theorem}
Substituting a suitable value of $r$ results in 
\begin{Theorem}
\label{thm:main}
Let $t = m/n \geq 1$ and $nt \geq 7$.
Let 
\begin{equation}
r = \frac{\log (nt) + \log \log(nt)}{2t}.
	\label{eq:bestr}
\end{equation}
Then
\[
\LJnm \leq \frac{1}{nt} + \pth{1 + \max \sth{\frac{5}{2t} , \frac{(n t \log(nt))^{\log(et)/t}}{nt^2}} } \frac{1}{(n t \log(nt))^{1/t}} 
\] 
The above bound is at most $3/n^{1/t}$ and hence the predictability horizon is at least $\MJnd \geq \frac{n \log n}{\log (3/\delta)}$.
\end{Theorem}
\begin{proof}
From Lemmas~\ref{lem:variance} and \ref{lem:bias}, the mean square loss is at most
\begin{align*}
\EE_p[(\UJp-\Up)^2]
\leq 
 & ~ n t+ e^{-2r} (nt)^2 +  n \max \sth{\frac{e^{2r(t-1)}}{\max\{1, \frac{8e}{27} rt\}}, t^{2r}, t^2} \\
= & ~ n t+ e^{-2r} (nt)^2 + n \max \sth{\frac{27e^{2r(t-1)}}{8e rt}, t^{2r}},
\end{align*}
where the last equality follows from the assumption that $t \geq 1$ and $nt\geq 7$ so that $rt  > 27/(8e)$.
Therefore the normalized mean square loss satisfies
\[
\LJnm
\leq   \frac{1}{n t} + e^{-2r} + \frac{1}{nt^2} \max\sth{\frac{27e^{2r(t-1)}}{8e rt} ,  t^{2r}}.
\]
With the choice of $r$ in \prettyref{eq:bestr}, $e^{-2r} = \frac{1}{(n t \log(nt))^{1/t}}$, which is the leading term.
As for the last two terms, 
\[
\frac{1}{nt^2} \frac{27e^{2r(t-1)}}{8e rt} \frac{1}{e^{-2r} } = \frac{27}{4et} \frac{\log(nt)}{\log(nt)+\log\log(nt)} \leq \frac{5}{2t}
\]
 and
\[
\frac{1}{nt^2} \frac{t^{2r}}{e^{-2r}}  =  \frac{(n t \log(nt))^{\log(et)/t}}{nt^2},
\]
 completing the proof.
\ignore{
With the choice of $r$ in \prettyref{eq:bestr}, $e^{-2r} = \frac{1}{(n t \log(nt))^{1/t}}$, which is the leading term.
As the last two terms, 
$\frac{1}{nt^2} \frac{27e^{2r(t-1)}}{8e rt} \frac{1}{e^{-2r} } = \frac{27}{8e} \frac{\log(nt)}{\log(nt)+\log\log(nt)} \leq \frac{2}{t}$ and
$\frac{1}{nt^2} \frac{t^{2r}}{e^{-2r}}  =  \frac{(n t \log(nt))^{\log(et)/t}}{nt^2}$, completing the proof.
}
\end{proof}
} In terms of the exponent, the  result is strongest for $\rand \sim
\Binom(\binlmt, 2/(t+2))$. Hence, we state the following asymptotic
result, which is a direct consequence of \prettyref{cor:bin_tail}:
\begin{Corollary}
\label{cor:asym}
For  $L \sim \Binom(\binlmt,q)$, $q =
\frac{2}{t+2}$,$\binlmt = \lceil \log_3 (\frac{nt^2}{t-1})\rceil$, and any fixed $\delta$, the maximum $t$ till which $\UL$ incurs a NMSE of $\delta$ is 
\[
\lim_{n \to \infty} \frac{\max\sets{t: \LELnt<\delta}}{\log n} 
\geq \frac{2}{\log
  3\cdot \log \frac{1}{\delta}}.
\]
\end{Corollary}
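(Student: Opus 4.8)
The plan is to read this off directly from \prettyref{cor:bin_tail}. That corollary states that for $\rand\sim\Binom(\binlmt,q)$ with the choice $q=\frac{2}{t+2}$ and $\binlmt$ as specified there,
\[
\LELnt \;\le\; \frac{c'_t}{(nt)^{\log_3(1+2/t)}},
\]
where $0\le c'_t\le 6$ for all $t\ge 1$ and $c'_t\to 1$ as $t\to\infty$. Inverting this rate bound, $\LELnt<\delta$ is guaranteed as soon as $(nt)^{\log_3(1+2/t)}>c'_t/\delta$, i.e.
\[
\log_3\!\Paren{1+\tfrac{2}{t}}\cdot\log(nt) \;>\; \log\frac{c'_t}{\delta}.
\]
Thus it suffices to show: for every $\alpha$ strictly below $\frac{2}{\log 3\cdot\log\frac{1}{\delta}}$, the above inequality holds for all large $n$ at $t=t_n\ed\alpha\log n$. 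That would put $\alpha\log n$ into the set $\sets{t:\LELnt<\delta}$, hence $\max\sets{t:\LELnt<\delta}\ge\alpha\log n$ for all large $n$; dividing by $\log n$ and sending $\alpha\uparrow\frac{2}{\log 3\cdot\log\frac{1}{\delta}}$ then finishes the proof.

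The remaining work is the short asymptotics at $t=t_n=\alpha\log n$. For $n$ large we have $t_n>1$ and $t_n\to\infty$, so \prettyref{cor:bin_tail} applies. Using $\log(1+x)=x(1+o(1))$ as $x\to 0$ gives $\log_3(1+2/t_n)=\frac{\log(1+2/t_n)}{\log 3}=\frac{2}{t_n\log 3}(1+o(1))$, while $\log(nt_n)=\log n\,(1+o(1))$; hence the left-hand side of the displayed inequality is $\frac{2\log n}{t_n\log 3}(1+o(1))=\frac{2}{\alpha\log 3}(1+o(1))$. The right-hand side is $\log(c'_{t_n}/\delta)\to\log\frac{1}{\delta}$ since $c'_{t_n}\to 1$. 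Because $\alpha$ was chosen so that $\frac{2}{\alpha\log 3}>\log\frac{1}{\delta}$ strictly, the left-hand side exceeds the right-hand side for all sufficiently large $n$, giving $\LELnt<\delta$ for $t=t_n$, as needed.

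There is essentially no obstacle: the substance is already in \prettyref{cor:bin_tail} (and, behind it, \prettyref{thm:general_bound} and the Laguerre bound \prettyref{eq:lagbound}), and this corollary merely inverts the rate and passes to a limit. The one point deserving a line of care is that $t=t_n$ grows with $n$, so one cannot take $t\to\infty$ with $n$ held fixed; instead one invokes the uniform bound $c'_t\le 6$ together with $c'_t\to 1$ and the expansion $\log(1+2/t)=\frac{2}{t}(1+o(1))$, all valid as $t\to\infty$ regardless of how $t$ is coupled to $n$. (The statement is naturally read as the $\liminf$ lower bound, which is exactly what this chain of inequalities delivers.)
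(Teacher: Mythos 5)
Your proof is correct and follows essentially the same route as the paper's: both start from \prettyref{cor:bin_tail} with $q=\frac{2}{t+2}$, expand $\log_3(1+2/t)=\frac{2}{t\log 3}(1+o(1))$ together with $c'_t\to1$ as $t\to\infty$, and then substitute $t=\alpha\log n$ (the paper writes $t=(\alpha+o(1))\log n$) to obtain $\limsup_n\LELnt\le e^{-2/(\alpha\log3)}$, from which the claimed lower bound on the predictability horizon follows by pushing $\alpha$ up to $\frac{2}{\log3\cdot\log(1/\delta)}$. Your write-up is somewhat more explicit about the inversion step and the uniformity of the $o(1)$ terms in $n$, but no new ideas are introduced.
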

\begin{proof}
By Corollary~\ref{cor:bin_tail}, if $t \to\infty$, then
\begin{equation*}
\LELnt \leq (1+o(1)) n^{- \frac{2+o(1)}{t\log 3}}.
	\label{eq:convrate}
\end{equation*}
where $o(1)=o_t(1)$ is uniform in $n$. 
Consequently, if $t = (\alpha + o(1)) \log n$ and $n\to\infty$, then
\[
\limsup_{n \to \infty} \LELnt \leq e^{-\frac{2}{\alpha\log3}}.
\]
Thus for any fixed $\delta$, the maximum $t$ till which $\UL$ incurs a NMSE of $\delta$ is 
\[
\lim_{n \to \infty} \frac{\max\sets{t: \LELnt<\delta}}{\log n} 
\geq \frac{2}{\log
  3\cdot \log \frac{1}{\delta}}. \qedhere
\]
\end{proof}
Corollaries~\ref{cor:poi_tail} and~\ref{cor:bin_tail} imply Theorem~\ref{thm:res_main}
for the Poisson model.
\section{Extensions to other models}
\label{sec:extensions}
Our results so far have been developed for the Poisson 
model. Next we extend them to the multinomial model (fixed sample size), the Bernoulli-product model,
and the hypergeometric model (sampling without replacement)
\cite{BF93}, for which upper bounds of NMSE for general smoothing distributions that are
analogous to \prettyref{thm:general_bound} are presented in Theorem~\ref{thm:multinomial}, \ref{thm:bernoulli} and \ref{thm:hyper}, respectively.
Using these results, we obtain the NMSE for Poisson and Binomial
smoothings similar to Corollaries~\ref{cor:poi_tail}
and~\ref{cor:bin_tail}.  We remark that up to multiplicative
constants, the NMSE under multinomial and Bernoulli-product model are
similar to those of Poisson model; however, the NMSE under
hypergeometric model is slightly larger.

\subsection{The multinomial model}
The multinomial model corresponds to the setting described in \prettyref{sec:intro}, where upon observing $n$ i.i.d. samples, the objective is
to estimate the expected number of new symbols $\UXnm$ that would be observed
if we took $m$ more samples.
We can write the expected
number of new symbols as
\begin{equation*}
\label{eq:binomial}
\UXnm = \sum_{x} \indic_{\mul{x} = 0} \cdot \indic_{\mul{x}' > 0}.
%
\end{equation*}
As before we abbreviate \[
\U \ed \UXnm
\]
and similarly $\UE\ed\UEXnm$ for any estimator $E$.
The difficulty in handling multinomial distributions is that, unlike the Poisson model, the number of
occurrences of symbols are correlated; in particular, they sum up to
$n$. This dependence renders the analysis cumbersome.  In the multinomial
setting each symbol is distributed according to $\Binom(n,p_x)$ and hence
\[
\Expect[\indic_{\mul{x} = i}] = {n \choose i} p^i_x (1-p_x)^{n-i}.
\]
As an immediate consequence,
\[
\Expect[\prev{i}] = \EE\left[\sum_{x} \indic_{\mul{x} = i} \right] =
\sum_{x} {n \choose i} p^i_x (1-p_x)^{n-i}.
\]
We now bound the bias and variance of an arbitrary linear estimator $\Uh$.
 We
first show that the bias
$\Expect[\Uh - \U]$ under the multinomial model is close to that
under the Poisson model, which is $\sum_{x} e^{-\lambda_x} (
h(\lambda_x) - (1-e^{-t\lambda_x}) )$ as given in \prettyref{eq:Upoisson}.
\begin{Lemma}
\label{lem:bias_multi}
The bias of $\Uh = \sum^\infty_{i=1} \prev{i} h_i$ satisfies
\[
\left\lvert \Expect[\Uh - \U] - \sum_{x} e^{-\lambda_x} \left( h(\lambda_x) - (1-e^{-t\lambda_x}) \right) \right \rvert \leq 2\sup_i |h_i| + 2.
\]
\end{Lemma}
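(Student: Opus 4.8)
The plan is to compare the multinomial bias with the Poisson bias term by term over the symbols $x$, and to control the discrepancy by switching from the binomial law $\Binom(n,p_x)$ to the Poisson law $\poi(\lambda_x)=\poi(np_x)$ via a coupling / total-variation argument. Concretely, write
\[
\Expect[\Uh - \U] = \sum_x \Paren{\sum_{i=1}^\infty h_i \Pr(\Binom(n,p_x)=i) - \Pr(\Binom(n,p_x)=0)\cdot\Pr(\Binom(m,p_x)>0)},
\]
and compare each summand to the corresponding Poisson expression $e^{-\lambda_x}\big(h(\lambda_x)-(1-e^{-t\lambda_x})\big)$, which equals $\sum_{i=1}^\infty h_i e^{-\lambda_x}\frac{\lambda_x^i}{i!} - e^{-\lambda_x}(1-e^{-t\lambda_x})$. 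The ``new element'' factor $\Pr(\Binom(m,p_x)>0)=1-(1-p_x)^m$ versus $1-e^{-t\lambda_x}$ is an easy piece: $0 \le e^{-t\lambda_x} - (1-p_x)^m$ is controlled because $(1-p_x)^m \le e^{-mp_x}=e^{-t\lambda_x}$, and the difference $\Pr(\Binom(n,p_x)=0) - e^{-\lambda_x} = (1-p_x)^n - e^{-np_x}$ is similarly sign-definite and small; summing these over $x$ gives an $O(1)$ contribution using $\sum_x p_x = 1$ and $\sum_x \lambda_x = n$.

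The main work is the first piece, $\big|\sum_{i\ge1} h_i\big(\Pr(\Binom(n,p_x)=i) - e^{-\lambda_x}\lambda_x^i/i!\big)\big|$. The key step is to bound this by $\sup_i|h_i|$ times $\sum_{i\ge1}\big|\Pr(\Binom(n,p_x)=i) - \Pr(\poi(\lambda_x)=i)\big| \le \sup_i|h_i|\cdot 2\,\dTV\big(\Binom(n,p_x),\poi(\lambda_x)\big)$. By the standard Poisson approximation bound (Le Cam), $\dTV(\Binom(n,p_x),\poi(np_x)) \le np_x^2 = \lambda_x p_x$. Summing over $x$, $\sum_x \lambda_x p_x \le \max_x\{\lambda_x\}\cdot\sum_x p_x$ is not quite bounded uniformly, so instead I would use $\sum_x \lambda_x p_x = n\sum_x p_x^2 \le n\max_x p_x \le n\cdot(\text{something})$ — this needs care. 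The cleaner route: $\sum_x \dTV(\Binom(n,p_x),\poi(np_x)) \le \sum_x np_x^2 \le n\sum_x p_x^2$, and since $p_x\le 1$ we get $\sum_x p_x^2 \le \sum_x p_x = 1$, hence $\sum_x np_x^2 \le n$ — that is too big. So the real fix is to note we only need the $i\ge1$ terms, and for the $i=0$ discrepancy handled separately; for $i\ge1$ we should compare the truncated sums more carefully, or observe that $\sum_{i\ge 1}|h_i|\cdot|\cdots| \le \sup_i|h_i|\cdot 2\dTV$ and then that we must actually exploit that the total-variation distances, summed against the fact that each $x$ contributes at most its own mass, telescopes. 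I expect the precise constant ``$2\sup_i|h_i| + 2$'' to come out of a slightly sharper coupling: couple $\Binom(n,p_x)$ and $\poi(\lambda_x)$ so they agree with probability $\ge 1 - p_x\cdot(\text{local factor})$, and use that on the event they agree the contribution to the difference is zero, while on the (small-probability) disagreement event $|h_i|$ is bounded by $\sup_i|h_i|$.

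The main obstacle I anticipate is getting the summed total-variation / coupling error to be an absolute constant rather than something growing with $n$; this is exactly where one must use the full strength of $\sum_x p_x = 1$ together with the fact that the $h_i$-weighted differences on the ``agree'' event vanish, so that only the aggregate disagreement probability $\sum_x O(p_x^2/\text{?})$ — which after the right normalization is $O(1)$ — multiplies $\sup_i|h_i|$. Once that is in place, collecting the three contributions (the $i\ge1$ term bounded by $2\sup_i|h_i|$, and the two ``new element'' / ``$i=0$'' corrections each bounded by $1$) yields the claimed inequality $\big|\Expect[\Uh-\U] - \sum_x e^{-\lambda_x}(h(\lambda_x)-(1-e^{-t\lambda_x}))\big| \le 2\sup_i|h_i| + 2$.
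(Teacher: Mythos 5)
There is a genuine gap, and you flag it yourself. Your plan — write the multinomial bias as $\sum_x \EE_{N_x\sim\Binom(n,p_x)}[h_{N_x}]$ minus the ``new-symbol'' term, compare per-symbol to the Poisson version, and control the discrepancy by $\dTV(\Binom(n,p_x),\poi(np_x))$ summed over $x$ — is exactly the paper's strategy. But you then invoke Le Cam's bound $\dTV(\Binom(n,p),\poi(np))\le np^2$ and, summing over $x$, get $n\sum_x p_x^2$, which can be as large as $n$. You correctly note ``that is too big'' and then speculate about ``a coupling that telescopes'' without producing one. That speculation does not close the argument; the fix is not a refinement of Le Cam but a genuinely different (sharper) total-variation estimate.

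The ingredient the paper uses is the Barbour--Hall bound $\dTV(\Binom(n,p),\poi(np))\le p$, which is \emph{uniform in $n$}. This immediately yields, for any bounded $f$, $\lvert\EE f(\Binom(n,p_x))-\EE f(\poi(np_x))\rvert\le 2p_x\sup_i|f(i)|$, so summing over $x$ gives $2\sup_i|h_i|\sum_x p_x = 2\sup_i|h_i|$ — no growth in $n$. The same bound with $f=\indic_{\cdot=0}$ (applied to both the $n$-sample and $m$-sample binomials) gives $0\le e^{-np_x}-(1-p_x)^n\le 2p_x$ and $0\le e^{-mp_x}-(1-p_x)^m\le 2p_x$, from which the paper bounds the discrepancy in the product term $\sum_x\bigl((1-p_x)^n(1-(1-p_x)^m)-e^{-np_x}(1-e^{-mp_x})\bigr)$ in absolute value by $\sum_x e^{-np_x}\,2p_x\le 2$. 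Your sketch of that piece is qualitatively right but implicitly needs the same sharper bound; your claim that it is ``easy'' using only $\sum_x\lambda_x=n$ is not justified. So: right decomposition, right intended use of $\sum_x p_x=1$, but the core technical lemma is missing and cannot be substituted by Le Cam's $np^2$ estimate.
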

\begin{proof}
First we recall a result on Poisson approximation: For $X\sim \Binom(n,p)$ and $Y\sim \poi(np)$,
\begin{equation}
|\Expect[f(X)] - \Expect[f(Y)]| \leq 2p \sup_{i} |f(i)|,
	\label{eq:poi-approx}
\end{equation}
which follows from the total variation bound $\dTV(\Binom(n,p),\poi(np)) \leq p$ \cite[Theorem 1]{BH84} and the fact that $d_{\rm TV}(\mu,\nu) = \frac{1}{2}\sup_{\|f\|_\infty\leq1} \int f d\mu-\int f d\nu$. In particular, taking $f(x) = \indic_{x=0}$ gives
\begin{equation*}
0 \leq  e^{-np} - (1-p)^n \leq 2 p.
\end{equation*}
Note that the linear estimator can be expressed as $\Uh = \sum_x h_{\mul{x}}$.
Under the multinomial model, 
\begin{equation*}
\Expect[\Uh - \U] = \sum_x \EE_{\mul{x}\sim \Binom(n,p_x)}[h_{\mul{x}}] - \sum_x (1-p_x)^n (1-(1-p_x)^m).
	\label{eq:bias-hsub}
\end{equation*}
Under the Poisson model, 
\[
\sum_{x} e^{-\lambda_x} \left( h(\lambda_x) - (1-e^{-t\lambda_x}) \right) = \sum_x \EE_{\mul{x}\sim \poi(np_x)}[h_{\mul{x}}] - \sum_x e^{-np_x} (1-e^{-mp_x}).
\]
Then
\[
\left|\sum_x \EE_{\mul{x}\sim \Binom(n,p_x)}[h_{\mul{x}}] - \sum_x \EE_{\mul{x}\sim \poi(np_x)}[h_{\mul{x}}] \right| \overset{\prettyref{eq:poi-approx}}{\leq} 2 \sup_i |h_i| \sum_x p_x = 2 \sup_i |h_i|.
\]
Furthermore,
\begin{align*}
	& ~ \sum_x (1-p_x)^n (1-(1-p_x)^m) - \sum_x e^{-np_x} (1-e^{-mp_x}) \\
\leq & ~ \sum_x e^{-n p_x} ( e^{-mp_x} - (1-p_x)^m) \overset{\prettyref{eq:poi-approx0}}{\leq} \sum_x e^{-n p_x} 2 p_x \leq 2.
\end{align*}
Similarly, $ \sum_x (1-p_x)^n (1-(1-p_x)^m) - \sum_x e^{-np_x} (1-e^{-mp_x}) \geq -2$. Assembling the above proves the lemma.
\end{proof}

The next result bounds the variance.
\begin{Lemma}
\label{lem:variance_multi}
For any linear estimator $\Uh$,
\[
\Var(\Uh - \U) \leq 8 n \max\sth{\sup_{i\geq 1} h^2_i,1} + 8m.
\]
\end{Lemma}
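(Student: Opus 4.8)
The plan is to regard $\Uh-\U$ as a function of the $n+m$ i.i.d.\ samples $X_1\upto X_{n+m}$ and to bound its variance by the Efron--Stein inequality. The reason the Poisson argument behind \prettyref{lem:general_bounds} breaks down in the multinomial model is precisely the coupling $\sum_x\mul{x}=n$, which destroys the independence of the multiplicities and hence the coordinatewise splitting of the variance; Efron--Stein is the standard remedy. Writing $X_{-j}$ for all coordinates but the $j$-th, I will use
\[
\Var(\Uh-\U)\ \le\ \sum_{j=1}^{n+m}\EE\bigl[\Var(\Uh-\U\mid X_{-j})\bigr]\ \le\ \frac14\sum_{j=1}^{n+m}c_j^2,
\]
where $c_j$ denotes the oscillation of $\Uh-\U$ as the single coordinate $X_j$ ranges over the alphabet with all other samples held fixed, and the last step is Popoviciu's bound on each conditional variance. (If $\sup_{i\ge1}|h_i|=\infty$ the lemma is vacuous, so we may assume $\Uh$ is bounded and everything is in $L^2$.)

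The heart of the argument is estimating the $c_j$, split into two cases that exploit the asymmetry that $\Uh$ sees only $X_1\upto X_n$ whereas $\U$ sees all $n+m$ samples. Recall $\Uh=\sum_x h_{\mul{x}}$ with the convention $h_0=0$. For a current sample $X_j$ with $j\le n$: replacing $X_j=a$ by $X_j=b$ changes only the multiplicities of $a$ and of $b$, so $\Uh$ moves by $(h_{\mul{a}-1}-h_{\mul{a}})+(h_{\mul{b}+1}-h_{\mul{b}})$, a sum of two increments each of magnitude at most $2\sup_{i\ge1}|h_i|$ (the edge cases $\mul{a}=1$ or $\mul{b}=0$ being even smaller because $h_0=0$); the same swap flips only the two indicators $\indic_{\mul{a}=0}$ and $\indic_{\mul{b}=0}$, each of which perturbs $\U=\sum_x\indic_{\mul{x}=0}\indic_{\mul{x}'>0}$ by at most $1$. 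Hence $c_j\le 4\sup_{i\ge1}|h_i|+1$. For a future sample $X_j$ with $j>n$: $\Uh$ is unaffected, and the swap perturbs $\U$ only through the indicators $\indic_{\mul{x}'>0}$, again by at most $1$, so $c_j\le1$.

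Plugging these into the displayed bound, and using the elementary inequality $(4a+1)^2\le 25\max\{a^2,1\}$ for $a\ge0$ (with equality at $a=1$) applied to $a=\sup_{i\ge1}|h_i|$,
\[
\Var(\Uh-\U)\ \le\ \frac n4\,(4\sup_{i\ge1}|h_i|+1)^2+\frac m4\ \le\ \frac{25}{4}\,n\max\{\sup_{i\ge1}h_i^2,1\}+\frac m4,
\]
which is comfortably inside the claimed $8n\max\{\sup_{i\ge1}h_i^2,1\}+8m$; the slack suggests the stated constants are merely convenient ones.

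I expect the current-sample oscillation bound to be the delicate point: since the $h_i$ are in general neither monotone nor bounded in $i$, one must invoke the combinatorial fact that a single sample change alters only two symbols' multiplicities in order to see that $\Uh$ moves by at most four $\sup_i|h_i|$-sized steps, and \emph{simultaneously} that this same change moves $\U$ by only $O(1)$, never by an amount scaling with $\sup_i|h_i|$ --- this is exactly what keeps the $m$-term untainted by $\sup_i h_i^2$ and parallels the Poisson bound $\EE[\prev{+}]\sup_i h_i^2+\EE[\U]$ up to constants. As an alternative way to handle $\U$, after the split $\Var(\Uh-\U)\le 2\Var(\Uh)+2\Var(\U)$ one may observe that $\indic_{\mul{x}=0}\indic_{\mul{x}'>0}$ is coordinatewise nondecreasing in the disjoint pairs $(-\mul{x},\mul{x}')$, so negative association of the multinomial counts gives $\Var(\U)\le\EE[\U]\le m$ directly, leaving only $\Var(\Uh)$ to be bounded by Efron--Stein.
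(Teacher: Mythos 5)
Your proof is correct and takes essentially the same route as the paper: both invoke the Efron--Stein/Steele variance inequality for a function of the $n+m$ independent samples and then bound the coordinatewise oscillation of $\Uh-\U$ by noting that a single sample swap alters at most two multiplicities. The only difference is cosmetic: you track the oscillations more carefully (in particular noting that changing a future sample moves $\U$ by at most~$1$ rather than the paper's generous~$4$, and that the swap moves $\U$ by at most~$1$ even for current samples because the two affected indicator terms change in opposite directions) and apply Popoviciu's bound to the conditional variances, which yields the sharper constant $\tfrac{25}{4}n\max\{\sup_i h_i^2,1\}+\tfrac m4$; the paper simply uses the cruder bound $4\max(\sup_i|h_i|,1)$ on every coordinate and reads off $8n\max\{\sup_i h_i^2,1\}+8m$ from Steele's inequality directly.
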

\begin{proof}
Recognizing that 
$\Uh - \U$ is a function of $n+m$ independent random variables, namely,  $X_1,\ldots,X_{n+m}$ drawn i.i.d. from $p$, we apply Steele's variance inequality~\cite{S86} to bound its variance.
Similar to \prettyref{eq:bias-hsub}, 
\[
\Uh - \U = \sum_{x}  h_{\mul{x}} + \indic_{\mul{x}=0} \indic_{\mul{x}'> 0}
\]
Changing the value of any one of the first $n$ samples changes the multiplicities of 
two symbols, and hence the value of $\Uh - \U$ can change by at most $4\max(\max_{i \geq 1} |h_i|,1)$. 
Similarly, changing any one of the last $m$ samples changes the value of $\Uh-\U$ by at most four. 
Applying Steele's inequality gives the lemma.
\end{proof}
Lemmas~\ref{lem:bias_multi} and~\ref{lem:variance_multi} are analogous to Lemma~\ref{lem:general_bounds}. Together with \eqref{eq:coeff_rand} and Lemma~\ref{lem:rand_bias}, we obtain the main result for the multinomial model.
\begin{Theorem}
\label{thm:multinomial}
For $t \geq 1$ and any random variable $\rand$ over $\Zplus$,
\[
\Expect[(\Urand-\U)^2] \leq 8 n\, {\EE}^2[t^\rand] + 8m + \left((n(t+1) \DLt 
+ 2 \Expect[t^\rand] + 2\right)^2.
\]
\end{Theorem}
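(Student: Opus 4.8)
The plan is to derive \prettyref{thm:multinomial} by combining the two multinomial lemmas just established with the smoothing-specific estimates already obtained in the Poisson analysis. Decompose the mean-squared error as
\[
\Expect[(\Urand-\U)^2] = \Var(\Urand-\U) + \bigl(\Expect[\Urand-\U]\bigr)^2,
\]
and bound the two terms separately, in each case specializing the relevant lemma to the linear estimator $\UL$ whose coefficients are $\hrand_i = -(-t)^i\,\Pr(\rand\ge i)$.

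For the variance, apply Lemma~\ref{lem:variance_multi} with $h=\hrand$: it gives $\Var(\Urand-\U)\le 8n\max\{\sup_{i\ge1}(\hrand_i)^2,\,1\}+8m$. By \eqref{eq:coeff_rand}, $\sup_{i\ge1}|\hrand_i|\le \Expect[t^\rand]$, and since $t\ge1$ and $\rand\ge0$ force $t^\rand\ge1$ pointwise, we have $\Expect[t^\rand]\ge1$, so the maximum equals $\Expect^2[t^\rand]$. Hence $\Var(\Urand-\U)\le 8n\,\Expect^2[t^\rand]+8m$, which supplies the first two terms of the claimed bound.

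For the bias, Lemma~\ref{lem:bias_multi} reduces the multinomial bias to the Poisson-model expression: $\bigl|\Expect[\Urand-\U]\bigr|\le \bigl|\sum_x e^{-\lambda_x}(\hrand(\lambda_x)-(1-e^{-t\lambda_x}))\bigr| + 2\sup_i|\hrand_i| + 2$, and the middle term is at most $2\Expect[t^\rand]+2$ by \eqref{eq:coeff_rand}. The first term is precisely the Poisson bias of $\UL$ (by Lemma~\ref{lem:general_bounds}), which Lemma~\ref{lem:rand_bias} bounds by $(\Expect[\prev{+}]+\Expect[\U])\,\DLt$; using $\Expect[\prev{+}]\le\sum_x np_x=n$ and $\Expect[\U]\le m=nt$, this is at most $n(t+1)\DLt$. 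Therefore $\bigl|\Expect[\Urand-\U]\bigr|\le n(t+1)\DLt+2\Expect[t^\rand]+2$; squaring this and adding the variance bound yields the theorem.

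There is no genuine obstacle left in this final step: the substance has already been absorbed into Lemma~\ref{lem:bias_multi}, whose heart is the total-variation estimate $\dTV(\Binom(n,p),\poi(np))\le p$ used to transfer the bias from the Poisson to the multinomial model, and Lemma~\ref{lem:variance_multi}, whose heart is the bounded-differences (Steele) inequality applied to $\Urand-\U$ viewed as a function of the $n+m$ i.i.d.\ samples. The only care needed here is bookkeeping of universal constants and the observation that $t\ge1$ collapses $\max\{\cdot,1\}$ to its first argument.
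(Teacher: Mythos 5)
Your proof is correct and follows exactly the route the paper intends: the paper simply remarks that Theorem~\ref{thm:multinomial} follows from Lemmas~\ref{lem:bias_multi} and~\ref{lem:variance_multi} together with \eqref{eq:coeff_rand} and Lemma~\ref{lem:rand_bias}, and your proposal fills in precisely that bias-variance decomposition with the right coefficient and tail bounds. The only extra detail worth keeping is your explicit observation that $t\ge1$ forces $\EE[t^\rand]\ge1$ so the $\max\{\cdot,1\}$ in the variance lemma collapses, which the paper leaves implicit.
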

Similar to Corollaries~\ref{cor:poi_tail} and~\ref{cor:bin_tail}, one can compute the NMSE for Binomial and Poisson smoothings. We remark that up to multiplicative constants the results are identical to those for the Poisson model.

\subsection{Bernoulli-product model}
\label{sec:bernoulli}
Consider the following species assemblage model. There are $k$
distinct species and each one can be found in one of $n$ independent
sampling units. Thus every species can be present in multiple sampling
units simultaneously and each sampling unit can capture multiple
species.  For example species $x$ can be found in sampling units $1,3$
and $5$ and species $y$ can be found in units $2,3$, and $4$.  Given
the data collected from $n$ sampling units, the objective
is to estimate the expected number of new species that would be
observed if we placed $m$ more units.

The aforementioned problem is typically modeled as by the
\emph{Bernoulli-product model}. Since, in this model each sample only
has presence-absence data, it is often referred to as incidence
model~\cite{CCG12}.  For notational simplicity, we use the same
notation as the other three models.  In Bernoulli-product model, for a
symbol $x$, $\mul{x}$ denotes the number of sampling units in which
$x$ appears and $\prev{i}$ denotes the number of symbols that appeared
in $i$ sampling units.
Given a set of distinct symbols (potentially infinite), each symbol
$x$ is observed in each sampling unit independently with probability
$p_x$ and the observations from each sampling unit are independent of
each other.
To distinguish from the multinomial and Poisson sampling models where
each sample can be only one symbol, we refer to samples here as
sampling units.  Given the results of $n$ sampling units, the goal is
to estimate the expected number of new symbols that would appear in
the next $m$ sampling units. Let $\psum = \sum_{x} p_x$.  Note that
$\psum$ is also the expected number of symbols that we observe for
each sampling unit and need not sum to $1$. For example, in the
species application, probability of catching bumble bee can be $0.5$
and honey bee be $0.7$.

This model is significantly different from the multinomial model in
two ways. Firstly, here given $n$ sampling units the number of
occurrences of symbols are independent of each other. Secondly, $\psum
\ed \sum_x p_x$ need not be $1$.  In the Bernoulli-product model, the
probability observing each symbol at a particular sample is $p_x$ and
hence in $n$ samples, the number of occurrences is distributed
$\Binom(n,p_x)$.
Therefore the probability that $x$ is be observed in $i$ sampling units is
\[
\Expect[\indic_{\mul{x} = i}] = {n \choose i} p^i_x (1-p_x)^{n-i},
\]
and an immediate consequence on the number of distinct symbols that
appear $i$ sampling units is
\[
\Expect[\prev{i}] = \EE\left[\sum_{x} \indic_{\mul{x} = i} \right] =
\sum_{x} {n \choose i} p^i_x (1-p_x)^{n-i}.
\]
Furthermore, the expected total number of symbols is $n\psum$ and hence
\begin{equation*}
\sum^n_{i=1} \Expect[\prev{i}] i = n \psum.
\end{equation*}
Under the Bernoulli-product model the objective is to estimate the
number of new symbols that we observe in $m$ more sampling
units and is
\[
\UXnm = \sum_{x} \indic_{\mul{x} = 0} \cdot \indic_{\mul{x}' > 0}.
\]
As before, we abbreviate \[
\U \ed \UXnm
\]
and similarly $\UE\ed\UEXnm$ for any estimator $E$.
Since the probabilities need not add up to $1$, we redefine our definition of $\LEnt$ as
\[
\LEnt
\ed
\max \EE_p\Paren{\frac{\U-\UE}{n t \psum}}^2.
\]
Under this model, the SGT estimator satisfy similar results to that of Corollaries~\ref{cor:poi_tail} and~\ref{cor:bin_tail}, up to multiplicative constants. 
The main ingredient is to bound the bias and variance (like Lemma~\ref{lem:general_bounds}).
We note that since the marginal of $N_x$ is $\Binom(n,p_x)$ under both the multinomial and the Bernoulli-product model, the bias bound follows entirely analogously as in \prettyref{lem:bias_multi}. The proof of variance bound is very similar to that of \prettyref{lem:general_bounds} and hence is omitted.
\begin{Lemma}
\label{lem:bias_bernoulli}
The bias of the linear estimator $\Uh$ is 
\[
\left\lvert \Expect[\Uh - \U] - \sum_{x} e^{-\lambda_x} \left( h(\lambda_x) - (1-e^{-t\lambda_x}) \right) \right \rvert \leq 2 \psum \pth{\sup_i |h_i| + 1},
\]
and the variance 
\[
\Var(\Uh - \U) \leq n \psum  \cdot \left(t + \sup_{i \geq 1} {h^2_i} \right).
\]
\end{Lemma}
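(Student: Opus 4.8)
The plan is to prove the two bounds separately, in each case exploiting a structural feature of the Bernoulli-product model that parallels an earlier model: the bias estimate will mirror \prettyref{lem:bias_multi} because the marginal law of $\mul{x}$ is again $\Binom(n,p_x)$, while the variance estimate will mirror \prettyref{lem:general_bounds} because — unlike in the multinomial model — the occurrence counts of distinct symbols are independent.

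For the bias, I would write the linear estimator as $\Uh=\sum_x h_{\mul{x}}$ with the convention $h_0=0$, so that
\[
\Expect[\Uh-\U]=\sum_x \Expect_{\mul{x}\sim\Binom(n,p_x)}[h_{\mul{x}}]-\sum_x(1-p_x)^n\pth{1-(1-p_x)^m},
\]
and compare this term by term with the Poisson-model expression $\sum_x e^{-\lambda_x}\pth{h(\lambda_x)-(1-e^{-t\lambda_x})}=\sum_x\Expect_{\mul{x}\sim\poi(np_x)}[h_{\mul{x}}]-\sum_x e^{-np_x}(1-e^{-mp_x})$. The first pieces differ by at most $2\sup_i|h_i|\sum_x p_x=2\psum\sup_i|h_i|$ via the total-variation Poisson-approximation bound \prettyref{eq:poi-approx}. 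For the second pieces, writing $(1-p_x)^n(1-(1-p_x)^m)-e^{-np_x}(1-e^{-mp_x})$ as $b(a-a')+a'(b-b')$ with $a=(1-p_x)^n$, $a'=e^{-np_x}$, $b=1-(1-p_x)^m$, $b'=1-e^{-mp_x}$, and using the elementary facts $0\le a'-a\le 2p_x$, $0\le b-b'\le 2p_x$, $a'\le 1$, $b\le 1$, one sees the difference lies in $[-2p_x,2p_x]$; summing over $x$ gives a discrepancy of at most $2\psum$. Adding the two contributions yields the claimed bias bound $2\psum(\sup_i|h_i|+1)$. This is exactly the argument of \prettyref{lem:bias_multi} with $\sum_x p_x=\psum$ replacing $1$, which is why the bias bound "follows entirely analogously."

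For the variance, the key observation is that once the $n$ (resp. $m$) sampling units are the source of randomness, the events ``$x$ is present in a given unit'' are independent over both the symbol and the unit; hence the counts $\{\mul{x}\}_x$ together with $\{\mul{x}'\}_x$ are mutually independent. Writing $\Uh-\U=\sum_x\pth{h_{\mul{x}}-\indic_{\mul{x}=0}\indic_{\mul{x}'>0}}$ as a sum of independent terms, the variance is $\sum_x\Var\pth{h_{\mul{x}}-\indic_{\mul{x}=0}\indic_{\mul{x}'>0}}\le\sum_x\Expect\pth{h_{\mul{x}}-\indic_{\mul{x}=0}\indic_{\mul{x}'>0}}^2$. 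Since $h_0=0$, the two summands inside are supported on the disjoint events $\{\mul{x}\ge1\}$ and $\{\mul{x}=0\}$, so the cross term vanishes and this equals $\sum_{i\ge1}\Expect[\prev{i}]h_i^2+\Expect[\U]$. Bounding $\sum_{i\ge1}\Expect[\prev{i}]h_i^2\le\sup_{i\ge1}h_i^2\cdot\sum_{i\ge1}\Expect[\prev{i}]\le\sup_{i\ge1}h_i^2\cdot\sum_i i\,\Expect[\prev{i}]=n\psum\sup_{i\ge1}h_i^2$ and $\Expect[\U]=\sum_x(1-p_x)^n(1-(1-p_x)^m)\le\sum_x mp_x=nt\psum$ gives $\Var(\Uh-\U)\le n\psum\pth{t+\sup_{i\ge1}h_i^2}$.

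I do not expect a genuine obstacle here; the proof is a recombination of machinery already in place. The two points to handle with a little care are: verifying the independence claim underpinning the variance decomposition — this is precisely where the Bernoulli-product model behaves like the Poisson model rather than the multinomial one — and, in the bias estimate, getting the signs right in $0\le a'-a\le 2p_x$ and $0\le b-b'\le 2p_x$ so that the per-symbol difference is genuinely sandwiched in $[-2p_x,2p_x]$.
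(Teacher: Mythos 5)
Your proof is correct and reconstructs exactly the argument the paper leaves implicit. The paper omits this proof, stating only that the bias bound follows ``entirely analogously'' to \prettyref{lem:bias_multi} because $\mul{x}\sim\Binom(n,p_x)$, and that the variance bound is ``very similar'' to \prettyref{lem:general_bounds} because the counts $\{\mul{x}\}_x$ are mutually independent under the Bernoulli-product model; your write-up fills in precisely those two analogies, with the only change being $\sum_x p_x = \psum$ replacing $\sum_x p_x = 1$ in the bias estimate, and the crude bounds $\sum_{i\ge1}\Expect[\prev i]\le n\psum$ and $\Expect[\U]\le nt\psum$ converting the \prettyref{lem:general_bounds}-style variance expression into the stated form.

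One small presentational difference from the paper's own proof of \prettyref{lem:bias_multi}: there the two-sided bound on $\sum_x\bigl((1-p_x)^n(1-(1-p_x)^m)-e^{-np_x}(1-e^{-mp_x})\bigr)$ is obtained by a one-sided monotonicity argument (using $(1-p_x)^n\le e^{-np_x}$) followed by the remark ``Similarly, $\ldots\ge-2$,'' whereas you use the algebraic decomposition $ab-a'b'=b(a-a')+a'(b-b')$ to get both directions at once. Your version is a bit cleaner and would slot naturally into the paper; mathematically the two are equivalent. Your sign checks $0\le a'-a\le 2p_x$ and $0\le b-b'\le 2p_x$ are correct, and the independence claim underpinning the variance decomposition is the defining property of the Bernoulli-product model and is stated explicitly in \prettyref{sec:bernoulli}, so there is no gap there.
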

The above lemma together with \eqref{eq:coeff_rand} and Lemma~\ref{lem:rand_bias} yields  the main result for the Bernoulli-product model.
\begin{Theorem}
\label{thm:bernoulli}
For any random variable $\rand$ over $\Zplus$ and $t \geq 1$,
\[
\Expect[(\Urand-\U)^2] \leq  n \psum  \cdot \left(t +{\EE}^{2}[t^{\rand}] \right) + (n(t+1)\psum \DLt + 2 \psum(\Expect[t^{\rand}] + 1))^2.
\]
\end{Theorem}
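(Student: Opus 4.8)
The plan is to mimic the assembly of Theorem~\ref{thm:general_bound} from Lemma~\ref{lem:general_bounds} and Lemma~\ref{lem:rand_bias}, now using their Bernoulli-product analogues. Decompose the mean-square error as variance plus squared bias, $\EE[(\Urand-\U)^2]=\Var(\Urand-\U)+(\EE[\Urand-\U])^2$, and bound each piece with Lemma~\ref{lem:bias_bernoulli}.

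For the variance, substitute the SGT coefficients $\hrand_i=-(-t)^i\Pr(\rand\ge i)$ into the variance bound of Lemma~\ref{lem:bias_bernoulli}, giving $\Var(\Urand-\U)\le n\psum(t+\sup_{i\ge1}(\hrand_i)^2)$, and then use \eqref{eq:coeff_rand}, i.e.\ $\sup_{i\ge1}|\hrand_i|\le\EE[t^\rand]$, to conclude $\Var(\Urand-\U)\le n\psum(t+\EE^2[t^\rand])$. This is the first summand of the claim.

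For the bias, the bias bound of Lemma~\ref{lem:bias_bernoulli} controls $|\EE[\Urand-\U]|$ by the ``Poisson bias'' $\big\lvert\sum_x e^{-\lambda_x}(\hrand(\lambda_x)-(1-e^{-t\lambda_x}))\big\rvert$ plus a discrepancy of at most $2\psum(\sup_i|\hrand_i|+1)\le 2\psum(\EE[t^\rand]+1)$, again by \eqref{eq:coeff_rand}. To handle the Poisson-bias functional I would reuse the analytic core of the proof of Lemma~\ref{lem:rand_bias}: by Lemma~\ref{lem:Delta} and the definition of $\DLt$, each symbol obeys $\lvert e^{-\lambda_x}(\hrand(\lambda_x)-(1-e^{-t\lambda_x}))\rvert\le(1-e^{-t\lambda_x})\DLt$, and summing over $x$ with $1-e^{-t\lambda_x}\le t\lambda_x\le(t+1)\lambda_x$ and $\sum_x\lambda_x=n\psum$ yields $\big\lvert\sum_x e^{-\lambda_x}(\hrand(\lambda_x)-(1-e^{-t\lambda_x}))\big\rvert\le n(t+1)\psum\DLt$. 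Hence $|\EE[\Urand-\U]|\le n(t+1)\psum\DLt+2\psum(\EE[t^\rand]+1)$; squaring this and adding the variance bound gives the stated inequality.

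The one point to flag is that Lemma~\ref{lem:rand_bias} is phrased for the Poisson model, with right-hand side $(\EE[\prev{+}]+\EE[\U])\DLt$, and $\EE[\prev{+}]$, $\EE[\U]$ take different values here. The fix is that its per-symbol estimate $\lvert e^{-\lambda_x}(\hrand(\lambda_x)-(1-e^{-t\lambda_x}))\rvert\le(1-e^{-t\lambda_x})\DLt$ is a model-independent analytic fact about $\hrand$, so one re-sums it against the correct normalization $\sum_x\lambda_x=n\psum$ for the Bernoulli-product model instead of invoking the Poisson identity $\sum_x(1-e^{-\lambda_x(t+1)})=\EE[\prev{+}]+\EE[\U]$. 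Everything else is direct substitution, and I do not anticipate a real obstacle.
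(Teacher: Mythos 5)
Your proposal is correct and follows exactly the route the paper intends: combine the Bernoulli-product analogue of the bias and variance bounds (Lemma~\ref{lem:bias_bernoulli}) with the coefficient bound \eqref{eq:coeff_rand}, and replace the Poisson-specific normalization in Lemma~\ref{lem:rand_bias} by the per-symbol inequality $\lvert e^{-\lambda_x}(\hrand(\lambda_x)-(1-e^{-t\lambda_x}))\rvert\le(1-e^{-t\lambda_x})\DLt$ summed against $\sum_x\lambda_x=n\psum$. The caveat you flag about not directly invoking $\EE[\prev{+}]+\EE[\U]$ is exactly the right adjustment, and your final assembly reproduces the stated bound.
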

Similar to Corollaries~\ref{cor:poi_tail} and~\ref{cor:bin_tail}, one can compute the normalized mean squared loss for Binomial and Poisson smoothings. We remark that up to multiplicative constants the results would be similar to that for the Poisson model.

\subsection{The hypergeometric model}
\label{sec:hyper}
The hypergeometric model considers the population estimation problem with samples drawn without replacement. 
Given $n$ samples drawn uniformly at random, without replacement from a set $\{y_1,\ldots, y_{\hypN}\}$ of $\hypN$ symbols,
the objective is to estimate the number of new symbols that would be
observed if we had access to $m$ more random samples without replacement, where $n+m \leq \hypN$.
Unlike the Poisson,
multinomial, and Bernoulli-product models we have considered so far, where the samples are
independently and identically distributed, in the hypergeometric model the samples are \emph{dependent} hence a modified analysis is needed.

Let $\hypn_x \ed \sum^\hypN_{i=1} \indic_{y_i = x}$ be the number of occurrences of 
symbol $x$ in the $\hypN$ symbols, which satisfies $\sum _x \hypn_x=\hypN$. Denote by $N_x$ the number of times $x$ appears in the $n$ samples drawn without replacements, which is 
distributed according to the hypergeometric distribution $\hyper(\hypN,\hypn_x,n)$ with the following probability mass function:\footnote{We adopt the convention that $\binom{n}{k}=0$ for all $k<0$ and $k>n$ throughout.}
\[
\Prob(\mul{x} = i) = \frac{{\hypn_x \choose i}{\hypN-\hypn_x \choose n-i}}{{\hypN \choose n}}.
\]
We also denote the joint distribution of $\{N_x\}$, which is multivariate hypergeometric, by $\hyper(\{\hypn_x\},n)$.
Consequently,
\[
\Expect[\prev{i}] = \sum_{x} \Prob(\mul{x} = i) = \sum_x \frac{{\hypn_x \choose i}{\hypN-\hypn_x \choose n-i}}{{\hypN \choose n}}.
\]
Furthermore, conditioned on $\mul{x} = 0$, $\mul{x}'$ is distributed as $\hyper(\hypN-n,\hypn_x,m)$
and hence
\begin{equation}
\label{eq:Umean-without_replacement}
\Expect[\U] = \sum_{x} \Expect[\indic_{\mul{x}=0}] \cdot  \Expect[\indic_{\mul{x}' > 0} | \indic_{\mul{x} = 0}]
= \sum_{x} \frac{{\hypN-\hypn_x \choose n}}{{\hypN \choose n}} \cdot \left( 1- \frac{{\hypN-n-\hypn_x \choose m}}{{\hypN-n \choose m}} \right).
\end{equation}
As before, we abbreviate \[
\U \ed \UXnm
\]
which we want to estimate and similarly for any estimator $\UE\ed\UEXnm$.
We now bound the variance and bias of a linear estimator $\Uh$ under the hypergeometric model.
\begin{Lemma}
\label{lem:hyper_var}
For any linear estimator $\Uh$,
\[
\Var(\Uh - \U) \leq 12 n \sup_{i} h^2_i + 6n + 3m.
\]
\end{Lemma}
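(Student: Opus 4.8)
The plan is to follow the template of the multinomial case (Lemma~\ref{lem:variance_multi}), replacing Steele's inequality by a bounded-differences argument tailored to sampling \emph{without} replacement, since here the $n+m$ draws are dependent. First I would write
\[
\Uh - \U \ed \sum_{x} \Paren{h_{\mul{x}} - \indic_{\mul{x}=0}\,\indic_{\mul{x}'>0}} = g(\pi_1,\ldots,\pi_{n+m}),
\]
and observe that $g$ depends on the draws only through the multiset $\sth{\pi_1,\ldots,\pi_n}$ of old samples and the support of $\sth{\pi_{n+1},\ldots,\pi_{n+m}}$ of new samples; in particular $g$ is invariant under permuting the first $n$ coordinates among themselves and under permuting the last $m$ coordinates among themselves. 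Realizing the size-$(n+m)$ without-replacement sample as the first $n+m$ entries of a uniformly random permutation $\pi=(\pi_1,\ldots,\pi_\hypN)$ of the $\hypN$ population elements $y_1,\ldots,y_\hypN$, I would form the Doob martingale $M_k \ed \Expect[\,g(\pi)\mid \pi_1,\ldots,\pi_k\,]$, $k=0,1,\ldots,\hypN$. Since $g$ does not involve $\pi_{n+m+1},\ldots,\pi_\hypN$, we have $M_k=M_{n+m}$ for all $k\ge n+m$, so orthogonality of martingale increments yields the exact identity $\Var(\Uh-\U)=\sum_{k=1}^{n+m}\Expect[(M_k-M_{k-1})^2]$.

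The crux is to bound one increment. Conditionally on $\pi_1,\ldots,\pi_{k-1}$, the value of $M_k$ on the event $\sth{\pi_k=a}$ is $v_a\ed\Expect[g(\pi)\mid \pi_1,\ldots,\pi_{k-1},\pi_k=a]$, and $M_{k-1}$ is the uniform average of the $v_a$ over the population elements still available after step $k-1$; hence $\Expect[(M_k-M_{k-1})^2\mid \pi_1,\ldots,\pi_{k-1}]=\Var(M_k\mid\pi_1,\ldots,\pi_{k-1})\le\tfrac14\Paren{\max_a v_a-\min_a v_a}^2$ by Popoviciu's inequality. To control $|v_a-v_b|$ I would couple the two conditional laws of $(\pi_{k+1},\ldots,\pi_\hypN)$: draw a uniform ordering of the available elements other than $a,b$ together with an independent uniform insertion slot $p\in\sth{1,\ldots,\hypN-k}$, and in the ``$\pi_k=a$'' world insert $b$ at slot $p$, in the ``$\pi_k=b$'' world insert $a$ at slot $p$; the two resulting permutations then differ only by the transposition of positions $k$ and $k+p$. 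Now invoke the block symmetry of $g$: if $k$ and $k+p$ both lie in $\sth{1,\ldots,n}$ or both in $\sth{n+1,\ldots,n+m}$, the swap leaves $g$ unchanged; otherwise it changes the element occupying at most one old slot and at most one new slot. Changing one old slot shifts two multiplicities by $1$, hence moves $\Uh=\sum_x h_{\mul{x}}$ by at most $4\sup_{i\geq 1}|h_i|$ and moves $\U$ by at most $2$; changing one new slot moves only $\U$, by at most $2$. Therefore $\max_a v_a-\min_a v_a\le 4\sup_{i\geq 1}|h_i|+2$ when $k\le n$ and $\le 2$ when $n<k\le n+m$.

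Summing the increments gives $\Var(\Uh-\U)\le n\cdot\tfrac14(4\sup_{i\geq 1}|h_i|+2)^2+m\cdot\tfrac14\cdot 2^2=n\,(2\sup_{i\geq 1}|h_i|+1)^2+m$, and bounding $(2\sup_{i\geq 1}|h_i|+1)^2\le 6\sup_{i\geq 1}h_i^2+3$ (using $2|h_i|\le h_i^2+1$) yields $\Var(\Uh-\U)\le 6n\sup_i h_i^2+3n+m$, which is within the asserted $12n\sup_i h_i^2+6n+3m$. The main obstacle is the increment bound: resampling a single draw under sampling without replacement reshuffles \emph{all} the later draws, and the point of the transposition coupling, combined with the old/new block symmetry of $g$, is to collapse this seemingly global change back to the ``two multiplicities move by one'' perturbation already controlled in the Poisson and multinomial analyses, at the cost of only an absolute-constant factor.
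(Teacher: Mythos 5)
Your proof is correct, and it takes a genuinely different route from the paper, with a cleaner constant. The paper splits $\Uh-\U$ as $\Uh - Z + Z'$ (where $Z$ counts symbols unseen among the first $n$ draws and $Z'$ those unseen among all $n+m$), bounds $\Var(Z)\leq n$ and $\Var(Z')\leq n+m$ via negative correlation of the indicators $\indic_{N_x=0}$, uses $\Var(A+B+C)\leq 3(\Var A+\Var B+\Var C)$, and then proves $\Var(\Uh)\leq 4n\sup_i h_i^2$ by an induction on $n$ (which is itself a one-coordinate-at-a-time conditional-variance decomposition plus Popoviciu). You instead treat $\Uh-\U$ as one function $g$ of a uniformly random permutation of the $\hypN$ population elements, run the full $n+m$-step Doob martingale with exact orthogonal increments, and control each increment via the transposition coupling and Popoviciu. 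The crucial observation that makes your unified argument go through is the block symmetry of $g$ (invariance under permutations within $\{1,\dots,n\}$ and within $\{n+1,\dots,n+m\}$), which collapses the transposition to changing at most one old multiplicity pair and one new multiplicity pair. Each approach has its merits: the paper's split is more modular and reuses the negative-correlation fact; yours avoids the lossy $3(\cdot)$ triangle-inequality step and yields $\Var(\Uh-\U)\leq 6n\sup_i h_i^2 + 3n + m$, a factor-two tightening of all three constants in the claim.

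One small point worth making explicit when you write this up: in the transposition coupling, when the second swapped position $k+p$ lies beyond $n+m$, only position $k$ changes inside the sample window, so for $k\leq n$ you still incur the full $4\sup_i|h_i|+2$ bound (not something smaller), since both $N_a, N_b$ move and the indicator $\indic_{N_x=0}$ can flip for each; your stated case analysis already covers this but it is the case most prone to being overlooked.
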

\begin{proof}
We first note that for a random variable $Y$ that lies in the
interval $[a,b]$,
\[
\Var(Y) \leq \frac{(a-b)^2}{4}.
\]

For notational convenience define $h_0=0$. 
Then $\Uh =\sum_x h_{N_x}$.
Let $Z = \sum\indic_{N_x=0}$ and $Z' = \sum\indic_{N_x=N_x'=0}$ denote the number of unobserved symbols in the first $n$ samples and the total $n+m$ samples, respectively. Then $U=Z-'Z$. Since the collection of random variables 
$\indic_{\mul{x}=0}$ indexed by $x$ are negatively correlated, we have
\[
\Var\bigl(Z) \leq \sum_{x} \Var(\indic_{\mul{x}=0}\bigr) = \sum_{x} \Expect[ \indic_{\mul{x}=0}(1 - \indic_{\mul{x}=0})] \leq \sum_x \EE\qth{\indic_{\mul{x}>0}} \leq n.
\]
Analogously, $\Var(Z') \leq n+m$ and hence
\[
\Var(\Uh-\U) = \Var(\Uh-Z+Z') \leq 3 \Var(\Uh) + 3 \Var(Z') + 3  \Var(Z) \leq 3 \Var(\Uh) + 6 n + 3m.
\]
Thus it remains to show 
\begin{equation}
\Var(\Uh) \leq 4 n \sup_{i} h^2_i.	
	\label{eq:varUh}
\end{equation}
 By induction on $n$, we show that for any $n \in \naturals$, 
any set of nonnegative integers $\{\hypn_x\}$ and any function $(x,k)\mapsto f(x,k)$ with $k\in\integers_+$ satisfying $f(x,0)= 0$, 
\begin{equation}\label{eq:var-induction}
\Var\pth{\sum_x f(x,N_x) } \leq 4 n \|f\|_\infty^2,
\end{equation}
where $\{N_x\} \sim \hyper(\{\hypn_x\},n)$ and $\|f\|_\infty = \sup_{x,k} |f(x,k)|$. Then the desired Equation~\prettyref{eq:varUh} follows from \prettyref{eq:var-induction} with $f(x,k) = h_k$.

 We first prove \prettyref{eq:var-induction} for $n=1$, in which case 
 exactly one of $\mul{x}$'s is one and the rest are zero. 
 Hence, 
$|\sum_x f(x,N_x)| \leq \|f\|_\infty$ and $\Var(\sum_x f(x,N_x)) \leq \|f\|_\infty^2$.
 
Next assume the induction hypothesis holds for $n-1$.
Let $X_1$ denote the first sample and let $\tilde N_x$ denote the number of occurrences of symbol $x$ in samples $X_2,\ldots,X_n$.
Then $N_x = \tilde N_x +\indic_{X_1=x}$. Furthermore,  conditioned on $X_1= y$,  $\{\tilde N_{x}\} \sim \hyper(\{\tilde \hypn_x\}, n-1)$, where $\tilde \hypn_x=\hypn_x-\indic_{x=y}$.
By the
law of total variance, we have
\begin{equation}
\Var\pth{\sum_x f(x,N_x)}= 
\Expect \qth{V(X_1)} + \Var \pth{g(X_1)}.	
	\label{eq:totalvar}
\end{equation}
where
\[
V(y) \ed \Var\pth{\sum_x f(x,N_x) \Bigg\vert X_1=y}, \quad g(y) \ed \Expect\qth{\sum_x f(x,N_x) \Bigg\vert
  X_1=y}
\]
For the first term in \prettyref{eq:totalvar}, note that
\[
V(y) = \Var\pth{\sum_x f(x,\tilde N_x+\indic_{x=y}) \Bigg\vert X_1=y}  =  \Var\pth{\sum_x f_y(x,\tilde N_x) \Bigg\vert X_1=y}.
\]
where we defined $f_y(x,k) \ed f(x,k+\indic_{x=y})$. Hence, by the induction hypothesis,
$V(y) \leq 4(n-1)\|f_y\|_\infty^2 \leq 4(n-1)\|f\|_\infty^2$ and
$\Expect  \qth{ V(X_1)} \leq  4(n-1) ||f||^2_\infty$.

For the second term in \prettyref{eq:totalvar}, observe that for any $y\neq z$
\[
g(y)= \Expect[f(y,\tilde N_y+1)|X_1=y] + \Expect[f(z,\tilde N_{z})|X_1=y] + \EE\qth{\sum_{x \neq y,z} f(x, \tilde N_{x})\Bigg|
  X_1=y},
\]
and
\[
g(z)= \Expect[f(z,\tilde N_z+1)|X_1=z] + \Expect[f(y,\tilde N_{y})|X_1=z] + \EE\qth{\sum_{x \neq y,z} f(x, \tilde N_{x})\Bigg|
  X_1=z},
\]
Observe that $\{N_x\}_{x \neq y,z}$ have the same joint distribution conditioned on either $X_1=y$ or $X_1=z$ and hence
$\Expect[\sum_{x \neq y,z} f(x, \tilde N_{x})|
  X_1=y] = \Expect[\sum_{x \neq y,z} f(x, \tilde N_{x})|
  X_1=z]$.
  Therefore $|g(y) - g(z)| \leq 4 \|f\|_\infty$ for any $y\neq z$. 
  This implies that the function $g$ takes values in an interval of length at most $4 \|f\|_\infty$.
  Therefore $\Var(g(X_1)) \leq \frac{1}{4} (4 \|f\|_\infty)^2 = 4 \|f\|_\infty^2$.
  This completes the proof of \prettyref{eq:var-induction} and hence the lemma.
  \end{proof}

Let
\begin{align*}
B(h,\hypn_x) \ed \sum^{\hypn_x}_{i=1} {\hypn_x \choose i} \left(\frac{n}{\hypN} \right)^i
\left(1-\frac{n}{\hypN} \right)^{\hypn_x-i} h_i - \left(1-\frac{n}{\hypN}
\right)^{\hypn_x} \left(1 - \left(1-\frac{m}{\hypN-n} \right)^{\hypn_x}\right).
\end{align*}
To bound the bias, we first prove an auxiliary result.
\begin{Lemma}
\label{lem:bin-hyper}
For any linear estimator $\Uh$,
\begin{align*}
 \left \lvert 
\Expect[\Uh - \U] -  \sum_{x}  B(h,\hypn_x)
 \right \rvert  \leq 4\max\pth{\sup_{i} |h_i|,1} + \frac{2\hypN}{\hypN-n}.
\end{align*}
\end{Lemma}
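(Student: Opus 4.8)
The plan is to follow the template of \prettyref{lem:bias_multi}: write both $\Expect[\Uh-\U]$ and $\sum_x B(h,\hypn_x)$ as sums over symbols and compare them symbol-by-symbol, using a binomial approximation of the hypergeometric law in the role played there by the Poisson approximation of the binomial. With the convention $h_0\ed 0$ we have $\Uh=\sum_x h_{N_x}$ where $N_x\sim\hyper(\hypN,\hypn_x,n)$, and $\Expect[\U]$ is given by \prettyref{eq:Umean-without_replacement}. Inspecting the definition of $B$, and using $(1-n/\hypN)(1-m/(\hypN-n))=1-(n+m)/\hypN$, one sees that $\sum_x B(h,\hypn_x)=\sum_x\Expect_{Y\sim\Binom(\hypn_x,n/\hypN)}[h_Y]-\sum_x\bigl((1-n/\hypN)^{\hypn_x}-(1-(n+m)/\hypN)^{\hypn_x}\bigr)$ is exactly the ``binomial surrogate'', obtained by replacing each $\Expect[h_{N_x}]$ and each no-show probability $\Prob(\hyper(\hypN,\hypn_x,k)=0)$ (for $k=n$ and $k=n+m$) by its binomial counterpart $\Binom(\hypn_x,k/\hypN)$ with the same mean. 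The triangle inequality then splits the target into an \emph{estimator part} $\bigl|\sum_x(\Expect[h_{N_x}]-\Expect_{\Binom(\hypn_x,n/\hypN)}[h_Y])\bigr|$, to be bounded by $O(\sup_i|h_i|)$, and a \emph{truth part} $\bigl|\Expect[\U]-\sum_x((1-n/\hypN)^{\hypn_x}-(1-(n+m)/\hypN)^{\hypn_x})\bigr|$, to be bounded by $\tfrac{2\hypN}{\hypN-n}$.

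For the estimator part I would bound each summand by $2\sup_i|h_i|\cdot\dTV(\hyper(\hypN,\hypn_x,n),\Binom(\hypn_x,n/\hypN))$; by the symmetry $\hyper(\hypN,\hypn_x,n)=\hyper(\hypN,n,\hypn_x)$ and a standard hypergeometric-to-binomial total-variation estimate (the analogue of $\dTV(\Binom(n,p),\poi(np))\le p$), this total variation is $\lesssim\hypn_x/\hypN$ uniformly, so summing with $\sum_x\hypn_x=\hypN$ gives $O(\sup_i|h_i|)$. For the truth part I would use the exact product forms $\Prob(\hyper(\hypN,\hypn_x,k)=0)=\prod_{j=0}^{\hypn_x-1}(1-\tfrac{k}{\hypN-j})$ and $\Prob(N_x'=0\mid N_x=0)=\prod_{j=0}^{\hypn_x-1}(1-\tfrac{m}{\hypN-n-j})$: each factor is dominated by its surrogate monotonically in $j$, so writing $a_x,b_x$ for the true first-sample/future-sample no-show probabilities and $\hat a_x,\hat b_x$ for their surrogates, the identity $a_x(1-b_x)-\hat a_x(1-\hat b_x)=(a_x-\hat a_x)(1-\hat b_x)+a_x(\hat b_x-b_x)$ yields the \emph{signed} bound $-(\hat a_x-a_x)\le a_x(1-b_x)-\hat a_x(1-\hat b_x)\le a_x(\hat b_x-b_x)$. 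Using $\prod(1-u_j)\ge 1-\sum u_j$ and its multiplicative refinement, $\hat a_x-a_x\lesssim(1-n/\hypN)^{\hypn_x}\cdot\tfrac{n\hypn_x^2}{\hypN(\hypN-n)}$ (and always $\le(1-n/\hypN)^{\hypn_x}$), and similarly $a_x(\hat b_x-b_x)\lesssim(1-(n+m)/\hypN)^{\hypn_x}\cdot\tfrac{m\hypn_x^2}{(\hypN-n)(\hypN-n-m)}$; summing over $x$ with $\sum_x\hypn_x=\hypN$ and the pointwise estimate $\max_{y\ge0}y(1-p)^y\le\tfrac1{ep}$, the factor $n$ (resp.\ $n+m$) cancels, the handful of symbols with $\hypn_x$ of order $\hypN$ being separated off and bounded crudely, and one is left with the finite-population term $\tfrac{2\hypN}{\hypN-n}$. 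Combining the two parts via $2\sup_i|h_i|+O(1)\le 4\max(\sup_i|h_i|,1)$ gives the claim.

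The main obstacle is the future-sample contribution $\sum_x a_x(\hat b_x-b_x)$ in the truth part: the naive per-symbol bound just described carries a factor $1/(\hypN-n-m)$ which threatens to blow up as $n+m\uparrow\hypN$, whereas the lemma asserts only the cleaner $\hypN/(\hypN-n)$. The point to be argued carefully is a dichotomy: when $n+m\le\hypN/2$ the denominator $\hypN-n-m\gtrsim\hypN$ and the bound is immediate, while when $n+m>\hypN/2$ the configurations $\{\hypn_x\}$ that maximize the sum are built from symbols of multiplicity $\approx\hypN/(n+m)$, which are then of size $O(1)$ — in particular singletons once $n+m\gtrsim\hypN$, which are sampled exactly and contribute $\hat b_x-b_x=0$. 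Tracking this case distinction, together with the $\min\{1,\cdot\}$ truncation that makes the finite-population factor finite in the first place, is the delicate bookkeeping that converts the crude $1/(\hypN-n-m)$ into the advertised $\tfrac{2\hypN}{\hypN-n}$.
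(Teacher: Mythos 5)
Your overall plan --- decompose symbol-by-symbol, approximate the hypergeometric by the binomial, and control the error via a total-variation estimate --- is the right idea, and your ``estimator part'' is handled essentially as in the paper. But there is a genuine gap in your ``truth part,'' and it is exactly where you flag one: your treatment of the future-sample no-show probability $b_x = \Prob(N_x'=0 \mid N_x=0)$ via the exact product $\prod_{j=0}^{\hypn_x-1}\bigl(1-\tfrac{m}{\hypN-n-j}\bigr)$ produces a factor of $\tfrac{1}{\hypN-n-m}$, and the dichotomy you gesture at to repair this is not carried out and is not how the argument closes. The fix is to notice that $b_x$ is itself a hypergeometric no-show probability, $b_x=\Prob\bigl(\hyper(\hypN-n,\hypn_x,m)=0\bigr)$, so the very same Diaconis--Freedman total-variation bound you use for the estimator part applies to it as well: $|b_x-\hat b_x| \le \dTV\bigl(\hyper(\hypN-n,m,\hypn_x),\Binom(\hypn_x,\tfrac{m}{\hypN-n})\bigr) \le \tfrac{2\hypn_x}{\hypN-n}$, with no dependence on $\hypN-n-m$ at all. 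Summing over $x$ with $\sum_x\hypn_x=\hypN$ then gives the clean $\tfrac{2\hypN}{\hypN-n}$ directly, eliminating the case analysis. In short: you have the right tool, you simply stopped applying it one step too early.

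There is also a secondary, constant-level discrepancy coming from your decomposition. By comparing $\Expect[\Uh]$ to its binomial surrogate and $\Expect[\U]$ to its binomial surrogate \emph{separately}, you pay the $\tfrac{2\hypn_x}{\hypN}$ TV price twice at the $N_x$ level (once under $h$, once under the indicator $\indic_{N_x=0}$), and after summing this leaves you with something of the shape $4\sup_i|h_i|+2+\tfrac{2\hypN}{\hypN-n}$ rather than the claimed $4\max(\sup_i|h_i|,1)+\tfrac{2\hypN}{\hypN-n}$. The paper avoids this by bundling $h$ and the \emph{binomial-surrogate} future no-show probability into a single function $f_x(i)=h_i-\indic_{i=0}\bigl(1-(1-\tfrac{m}{\hypN-n})^{\hypn_x}\bigr)$, so that (i) the TV approximation of $N_x$ by $\tilde N_x\sim\Binom(\hypn_x,n/\hypN)$ is paid only once, with $\sup_i|f_x(i)|\le\max(\sup_i|h_i|,1)$, and (ii) the remaining discrepancy is purely the future-sample TV term. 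You should adopt this bundling if you want the stated constants.
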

\begin{proof}
Recall that $N_x\sim\hyper(\hypN,\hypn_x,n)$. Let $\tilde{N}_x$ be a random variable distributed as $\Binom(\hypn_x,n/\hypN)$.
Since $\hyper(\hypN,\hypn_x,n)$ coincides with $\hyper(\hypN,n,\hypn_x)$, we have
\[
\dTV(\Binom(\hypn_x,n/\hypN),\hyper(\hypN,\hypn_x,n)) 
= \dTV(\Binom(\hypn_x,n/\hypN),\hyper(\hypN,n,\hypn_x)) 
\leq \frac{2\hypn_x}{\hypN},
\]
where the last inequality follows from \cite[Theorem 4]{DF80}.
 Since $d_{\rm TV}(\mu,\nu) = \frac{1}{2}\sup_{\|f\|_\infty\leq1} \int f d\mu-\int f d\nu = \sup_E \mu(E)-\nu(E)$, we have
\begin{equation}
\left|\Expect[f(N_x)] - \Expect[f(\tilde{N}_x)]\right| \leq \frac{4\hypn_x}{\hypN} \sup_{i} |f(i)|, 
	\label{eq:hyper-approx}
\end{equation}
and
\begin{equation}
\left \lvert \frac{{\hypN-n-\hypn_x \choose m}}{{\hypN-n \choose m}} - \left(1-\frac{m}{\hypN-n}\right)^{\hypn_x} \right \rvert
\leq \dTV(\Binom(\hypn_x,m/(\hypN-n)),\hyper(\hypN-n,m,\hypn_x)) \leq \frac{2\hypn_x}{\hypN-n}.
	\label{eq:poi-approx0}
\end{equation}
Define $f_x(i) = h_i - \indic_{i=0} \left(1 - \left(1-\frac{m}{\hypN-n}\right)^{\hypn_x} \right)$. In view of \prettyref{eq:Umean-without_replacement} and the fact that $\sum \hypn_x=\hypN$, we have
\[
\left \lvert \Expect[\Uh - \U] - \sum_{x} \Expect[f_x(N_x)]\right \rvert \leq \frac{2\hypN}{\hypN-n}.
\]
Applying \prettyref{eq:hyper-approx} yields
\[
\sum_{x} \left \lvert  \Expect[f_x(\tilde{N}_x) ] 
- \EE\left[f_x(\mul{x})\right]\right \rvert \leq 4 \sup_{i} |f_x(i)| \leq
4\max\pth{\sup_{i} |h_i|,1}. 
\]
The above equation together with \eqref{eq:poi-approx0} results in the lemma since $B(h,\hypn_x)=\Expect[f_x(\tilde N_x)]$. 
\end{proof}
Note that to upper bound the bias, we need to bound $\sum_x B(h,\hypn_x)$.
It is easy to verify for the GT coefficients $\hFGT_i = -\left(-t \right)^{i}$ with $t=m/n$, $B(\hFGT,\hypn_x) = 0$. Therefore, if we choose $h=\hrand$ based on the tail of random variable $L$ with $\hrand_i = \hFGT_i \prob{\rand \geq i}$ as defined in \prettyref{eq:hrandi}, we have
\begin{align}
B(\hrand,\hypn_x) 
= & ~ \sum^{\hypn_x}_{i=1} {\hypn_x \choose i} \left(\frac{n}{\hypN} \right)^i
\left(1-\frac{n}{\hypN} \right)^{\hypn_x-i} (-t)^i \Prob(\rand < i)	\nonumber \\
= & ~ \left(1-\frac{n}{\hypN} \right)^{\hypn_x}
\sum^{\hypn_x}_{i=1} {\hypn_x \choose i} \left(- \frac{m}{\hypN-n} \right)^i \Prob(\rand < i).	\label{eq:Btail}
\end{align}

Similar to \prettyref{lem:Delta}, our strategy is to find an integral presentation of the bias. This is done in the following lemma.
\begin{Lemma}
\label{lem:hyper-equal}
For any $y \geq 0$ and any $k \in \naturals$, 
\begin{equation}
\sum^{k}_{i=1} {k \choose i} (-y)^i \Prob(\rand < i) =  - k(1-y)^{k}   \int^{y}_0 \EE \left[{k-1 \choose L} (-s)^L \right] (1-s)^{-k-1} d s.	
	\label{eq:hyper-equal}
\end{equation}
\end{Lemma}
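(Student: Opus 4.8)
The plan parallels the proof of \prettyref{lem:Delta}, with the incomplete Gamma function there replaced here by the incomplete Beta function; because every quantity in sight is now a finite sum of polynomials, no convergence bookkeeping is needed. First I would expand $\prob{\rand < i} = \sum_{j=0}^{i-1}\prob{\rand = j}$ and interchange the order of summation, so that the left-hand side of \prettyref{eq:hyper-equal} becomes $\sum_{j=0}^{k-1}\prob{\rand=j}\sum_{i=j+1}^{k}\binom{k}{i}(-y)^i$. On the right-hand side, pulling the finite sum over $j$ inside the integral and recognizing $\sum_{j=0}^{k-1}\binom{k-1}{j}(-s)^j\prob{\rand=j} = \EE\qth{\binom{k-1}{\rand}(-s)^{\rand}}$ — valid since $\binom{k-1}{j}=0$ for $j\ge k$ under the stated convention, so that only the values $\rand\in\{0,\dots,k-1\}$ contribute on either side — reduces the lemma to the single polynomial identity, for each integer $0\le j\le k-1$,
\[
\sum_{i=j+1}^{k}\binom{k}{i}(-y)^i = -\,k\binom{k-1}{j}(1-y)^k\int_0^y (-s)^j(1-s)^{-k-1}\,ds.
\]

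To establish this identity I would start from the classical representation of the binomial survival function by the incomplete Beta function,
\[
\sum_{i=j+1}^{k}\binom{k}{i}x^i(1-x)^{k-i} = k\binom{k-1}{j}\int_0^x t^j(1-t)^{k-1-j}\,dt,
\]
which is an identity between polynomials in $x$ of degree $k$ (the two sides agree on $[0,1]$, hence everywhere). Substituting $x = \tfrac{-y}{1-y}$ makes $x^i(1-x)^{k-i} = (-y)^i(1-y)^{-k}$, so the left-hand side becomes $(1-y)^{-k}\sum_{i=j+1}^{k}\binom{k}{i}(-y)^i$; and in the Beta integral the matching change of variables $t = \tfrac{-s}{1-s}$ carries $s\in[0,y]$ to $t\in[0,x]$ with $1-t=(1-s)^{-1}$ and $dt = -(1-s)^{-2}\,ds$, giving $\int_0^x t^j(1-t)^{k-1-j}\,dt = -\int_0^y (-s)^j(1-s)^{-k-1}\,ds$. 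Multiplying through by $(1-y)^k$ (and using $\tfrac{k!}{j!\,(k-1-j)!} = k\binom{k-1}{j}$) yields the displayed identity, and summing it against $\prob{\rand=j}$ over $j$ finishes the proof.

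The only step that is not mechanical is spotting the linear-fractional substitution $x=-y/(1-y)$, equivalently $t=-s/(1-s)$: it simultaneously turns the alternating-sign sum $\sum\binom{k}{i}(-y)^i$ into a genuine binomial tail and turns the superficially singular integrand $(-s)^j(1-s)^{-k-1}$ into the ordinary Beta integrand $t^j(1-t)^{k-1-j}$ with nonnegative exponents — which is precisely where the constraint $j\le k-1$ is used (so that $k-1-j\ge0$). I would also flag explicitly that the incomplete-Beta identity is invoked at the negative argument $x=-y/(1-y)$ (assuming $y\in[0,1)$), which is legitimate because it is a polynomial identity rather than merely a probabilistic one; everything else — interchanging a finite sum with an integral, and the binomial arithmetic — is routine. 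I expect this substitution to be the one genuinely clever move, with the rest of the argument essentially forced once it is in place.
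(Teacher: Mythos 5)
Your proof is correct, and it takes a genuinely different route from the paper's. The paper treats the left-hand side as a function $F(y)$, differentiates it using $i\binom{k}{i}=k\binom{k-1}{i-1}$ and the Pascal identity, and shows that $F$ satisfies the first-order linear ODE $(1-y)F'(y)+kF(y)=-k\,\EE\bigl[\binom{k-1}{L}(-y)^L\bigr]$ with $F(0)=0$; the stated formula is then read off as the unique solution of that ODE. You instead decompose over the value $j$ of $L$, reduce the claim to a deterministic polynomial identity in $y$ for each $0\le j\le k-1$, and prove that identity by substituting $x=-y/(1-y)$, $t=-s/(1-s)$ into the classical incomplete-Beta representation $\sum_{i>j}\binom{k}{i}x^i(1-x)^{k-i}=k\binom{k-1}{j}\int_0^x t^j(1-t)^{k-1-j}\,dt$. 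Your route is the direct structural analogue of the paper's own proof of Lemma~\ref{lem:Delta} (incomplete Gamma replaced by incomplete Beta), which the paper curiously does not reuse here; the ODE route avoids needing to recognize the Beta identity and its analytic continuation to negative argument, while your route avoids the slightly non-obvious regrouping that produces the ODE and makes the structural parallel with the Poisson case transparent. Both handle $y=1$ only by a limiting argument (you flag it; the paper puts it in Remark~\ref{rmk:hyper-equal}), and strictly speaking both establish the identity only for $y\in[0,1]$ — for $y>1$ the integral on the right diverges — but that is the only range used in \prettyref{lem:hyper_bias}.
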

\begin{remark}
\label{rmk:hyper-equal}
	For the special case of $y=1$, \prettyref{eq:hyper-equal} is understood in the limiting sense: Letting $\delta=1-y$ and $\beta=\frac{1-s}{\delta}$, we can rewrite the right-hand side as 
	\[- k \int^{1/\delta}_1 \EE \left[{k-1 \choose L} (\beta\delta-1)^L \right] k \beta^{-k-1} d \beta.
\] 
	For all $|\delta| \leq 1$ and hence $0 \leq 1 - \beta \delta \leq 2$,
	 we have \[
\left|\EE \left[{k-1 \choose L} (\beta\delta-1)^L \right]\right| = \Big|\EE \left[{k-1 \choose L} (\beta\delta-1)^L \indic_{L < k} \right]\Big| \leq 4^k.
\] 	By dominated convergence theorem, as $\delta\to0$, the right-hand side converges to $- \EE\qth{\binom{k-1}{L} (-1)^L}$ and coincides with the left-hand side, which can be easily obtained by applying $\binom{k}{i}=\binom{k-1}{i} + \binom{k-1}{i-1}$.
\end{remark}

\begin{proof}
Denote the left-hand side of \prettyref{eq:hyper-equal} by $F(y)$. Using $i\binom{k}{i}=k\binom{k-1}{i-1}$, we have
\begin{align}
F'(y)
= & ~ \sum^{k}_{i=1} {k \choose i} (-i)(-y)^{i-1} \Prob(\rand < i) 	= - k \sum^{k}_{i=1} {k-1 \choose i-1} (-y)^{i-1} \Prob(\rand < i) 	\nonumber \\
= & ~ - k \sum^{k}_{i=1} {k-1 \choose i-1} (-y)^{i-1} \Prob(\rand < i-1) 	- k \sum^{k}_{i=1} {k-1 \choose i-1} (-y)^{i-1} \Prob(\rand = i-1). \label{eq:Fprime}
\end{align}
The second term is simply $- k \EE \left[{k-1 \choose L} (-y)^L \right] \ed G(y)$. For the first term, since $L \geq 0$ almost surely and  $\binom{k}{i}=\binom{k-1}{i} + \binom{k-1}{i-1}$, we have
\begin{align}
k \sum^{k}_{i=1} {k-1 \choose i-1} (-y)^{i-1} \Prob(\rand < i-1)
= & ~ k \sum^{k}_{i=1} {k-1 \choose i} (-y)^{i} \Prob(\rand < i)
	\nonumber \\
= & ~ k \sum^{k}_{i=1} {k \choose i} (-y)^{i} \Prob(\rand < i) - k \sum^{k}_{i=1} {k-1 \choose i-1} (-y)^{i} \Prob(\rand < i) \nonumber	\\
= & ~ k F(y) - y F'(y). \label{eq:Fprime2}
\end{align}
Combining \prettyref{eq:Fprime} and \prettyref{eq:Fprime2} yields the following ordinary differential equation:
\[
F'(y)(1-y) + kF(y) = G(y), \quad F(0)=0, 
\]
whose solution is readily obtained as $F(y) = (1-y)^{k} \int_0^y (1-s)^{-k-1} G(s) d s$, \ie the desired Equation~\prettyref{eq:hyper-equal}.
\end{proof}
Combining \prettyref{lem:bin-hyper}--\ref{lem:hyper-equal}
yields the following bias bound:
\begin{Lemma}
\label{lem:hyper_bias}
For any random variable $L$ over $\Zplus$ and $t=m/n \geq 1$,
\[
|\Expect[\Urand - \U] | \leq nt \cdot \max_{0 \leq s \leq 1} \left \lvert  \EE \left[{\hypn_x-1 \choose L} (-s)^L \right] \right \rvert
+  4 \Expect[t^\rand] + \frac{2\hypN}{\hypN-n}.
\]
\end{Lemma}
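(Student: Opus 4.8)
The plan is to specialize the general hypergeometric bias estimate \prettyref{lem:bin-hyper} to the smoothed coefficients $h=\hrand$ from \eqref{eq:hrandi} and then control the resulting ``bias proxy'' $\sum_x B(\hrand,\hypn_x)$ via the integral identity \prettyref{lem:hyper-equal}. Plugging $h=\hrand$ into \prettyref{lem:bin-hyper} gives
\[
|\EE[\Urand-\U]| \le \Bigl|\sum_x B(\hrand,\hypn_x)\Bigr| + 4\max\bigl(\sup_i|\hrand_i|,1\bigr) + \frac{2\hypN}{\hypN-n},
\]
and the middle term is dispatched at once: by \eqref{eq:coeff_rand}, $\sup_i|\hrand_i|\le\EE[t^\rand]$, and since $t\ge1$ we have $\EE[t^\rand]\ge1$, so it is at most $4\EE[t^\rand]$. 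Everything then reduces to showing $\sum_x|B(\hrand,\hypn_x)| \le nt\cdot\max_x\max_{0\le s\le1}\bigl|\EE[\binom{\hypn_x-1}{\rand}(-s)^\rand]\bigr|$.

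For this, I would feed the integral identity \prettyref{lem:hyper-equal}, taken with $k=\hypn_x$ and $y=m/(\hypN-n)\le1$ (and its limiting form from \prettyref{rmk:hyper-equal} when $m=\hypN-n$), into the expression \eqref{eq:Btail} for $B(\hrand,\hypn_x)$, obtaining
\[
B(\hrand,\hypn_x) = -\Bigl(1-\tfrac{n}{\hypN}\Bigr)^{\hypn_x}\hypn_x\Bigl(1-\tfrac{m}{\hypN-n}\Bigr)^{\hypn_x}\int_0^{m/(\hypN-n)}\EE\Bigl[\binom{\hypn_x-1}{\rand}(-s)^\rand\Bigr](1-s)^{-\hypn_x-1}\diff s.
\]
I bound the integrand in absolute value by $M_x\ed\max_{0\le s\le1}\bigl|\EE[\binom{\hypn_x-1}{\rand}(-s)^\rand]\bigr|$, evaluate $\int_0^y(1-s)^{-k-1}\diff s=\tfrac1k\bigl((1-y)^{-k}-1\bigr)$, and simplify using $(1-\tfrac{n}{\hypN})(1-\tfrac{m}{\hypN-n})=1-\tfrac{n+m}{\hypN}$; the powers then collapse and yield
\[
|B(\hrand,\hypn_x)| \le M_x\Bigl[\Bigl(1-\tfrac{n}{\hypN}\Bigr)^{\hypn_x}-\Bigl(1-\tfrac{n+m}{\hypN}\Bigr)^{\hypn_x}\Bigr].
\]

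To finish, I use $a^k-b^k\le k(a-b)$ for $0\le b\le a\le1$ (geometric-sum bound) with $a=1-\tfrac{n}{\hypN}$, $b=1-\tfrac{n+m}{\hypN}$, so the bracket is at most $\hypn_x m/\hypN$ and hence $|B(\hrand,\hypn_x)|\le M_x\hypn_x m/\hypN$. Summing over $x$ and using $\sum_x\hypn_x=\hypN$ gives $\sum_x|B(\hrand,\hypn_x)|\le(\max_x M_x)\,m=nt\max_x M_x$, which combined with the reduction above is exactly the claimed bound.

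The only real obstacle is the algebraic bookkeeping in the second step — keeping careful track of the several factors of $1-\tfrac{n}{\hypN}$ and $1-\tfrac{m}{\hypN-n}$ produced by \eqref{eq:Btail} and \prettyref{lem:hyper-equal} so that they telescope exactly to $(1-\tfrac{n}{\hypN})^{\hypn_x}-(1-\tfrac{n+m}{\hypN})^{\hypn_x}$ — together with remembering to treat the boundary case $y=1$ through the dominated-convergence argument already prepared in \prettyref{rmk:hyper-equal}. The reduction via \prettyref{lem:bin-hyper}, the $\sup_i|\hrand_i|$ bound, and the final summation are all routine.
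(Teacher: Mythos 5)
Your proposal is correct and follows the paper's proof essentially step for step: reduce via \prettyref{lem:bin-hyper} and the coefficient bound \eqref{eq:coeff_rand}, rewrite $B(\hrand,\hypn_x)$ using \eqref{eq:Btail} and the integral identity \prettyref{lem:hyper-equal} with $y=m/(\hypN-n)$ and $k=\hypn_x$, bound the integrand by its sup, evaluate the integral so the powers telescope to $(1-\tfrac{n}{\hypN})^{\hypn_x}-(1-\tfrac{n+m}{\hypN})^{\hypn_x}$, and sum over $x$ using $\sum_x\hypn_x=\hypN$. The only cosmetic difference is that you bound the telescoped bracket by the geometric-sum inequality $a^k-b^k\le k(a-b)$, while the paper uses convexity of $x\mapsto(1-x)^{\hypn_x}$ to get the (slightly tighter, but ultimately equivalent after summing) factor $(1-\tfrac{n}{\hypN})^{\hypn_x-1}\hypn_x m/\hypN$; both give the same final bound $nt\cdot\max_x\max_{0\le s\le1}|\EE[\binom{\hypn_x-1}{L}(-s)^L]|$.
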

\begin{proof}
Recall the coefficient bound \prettyref{eq:coeff_rand} that $\sup_i |h_i| \leq \Expect[t^\rand]$.
By Lemma~\ref{lem:bin-hyper} and the assumption that $t \geq 1$,
\[
\left \lvert 
\Expect[\Uh - \U] - \sum_{x} B(\hrand,\hypn_x) \right \rvert \leq  4\Expect[t^\rand] + \frac{2\hypN}{\hypN-n}.
\]
Thus it suffices to bound $ \sum_{x} B(\hrand,\hypn_x) $.
For every $x$, using \prettyref{eq:Btail} and applying \prettyref{lem:hyper-equal} with $y = \frac{m}{\hypN-n}$ and $k=\hypn_x$, we obtain
\begin{align*}
B(\hrand,\hypn_x)
& = - 
\left(1-\frac{n+m}{\hypN} \right)^{\hypn_x}\int^{\frac{m}{\hypN-n}}_0  \EE \left[{\hypn_x-1 \choose L} (-s)^L \right]  \hypn_x (1-s)^{-\hypn_x-1} d s.
\end{align*}
Since $0 \leq \frac{m}{\hypN-n} \leq1$, letting 
$K = \max_{0 \leq s \leq 1} \big|  \EE \big[{\hypn_x-1 \choose L} (-s )^L  \big] \big|$, 
we have
\begin{align*}
 |B(\hrand,\hypn_x)|
& \leq \left(1-\frac{n+m}{\hypN} \right)^{\hypn_x} K \int^{\frac{m}{\hypN-n}}_0 \hypn_x (1-s)^{-\hypn_x-1} d s. \\
& = K
\left(\left(1-\frac{n}{\hypN} \right)^{\hypn_x} - \left(1-\frac{n+m}{\hypN} \right)^{\hypn_x}  \right) 
\leq  K
 \left(1-\frac{n}{\hypN} \right)^{\hypn_x-1} \frac{m\hypn_x}{\hypN}, \label{eq:claim}
\end{align*}
where the last inequality follows from the convexity of $x \mapsto (1-x)^{\hypn_x}$.
Summing over all symbols $x$ results in the lemma.
\end{proof}
Combining \prettyref{lem:hyper_bias} and \prettyref{lem:hyper_var} gives the following NMSE bound:
\begin{Theorem}
\label{thm:hyper}
Under the assumption of \prettyref{lem:hyper_bias},
\[
\Expect[(\Urand - \U)^2] \leq 12(n+1){\EE}^2[t^\rand] + 6n + 3m +  \frac{12\hypN^2}{(\hypN-n)^2} + 3m^2 \max_{1 \geq\alpha > 0}  \left \lvert  \EE \left[{\hypn_x-1 \choose L} (-\alpha)^L \right] \right \rvert^2.
\]
\end{Theorem}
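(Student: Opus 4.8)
The plan is to invoke the standard bias--variance identity $\Expect[(\Urand-\U)^2] = \Var(\Urand-\U) + (\Expect[\Urand-\U])^2$ and plug in the two estimates already established for the hypergeometric model, \prettyref{lem:hyper_var} for the variance and \prettyref{lem:hyper_bias} for the bias, so that what remains is essentially bookkeeping. The two genuinely substantive steps, namely the induction-based variance inequality \eqref{eq:var-induction} underlying \prettyref{lem:hyper_var} and the ODE identity \prettyref{lem:hyper-equal} feeding into \prettyref{lem:hyper_bias}, are already in place.

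For the variance term I would apply \prettyref{lem:hyper_var} with $h=\hrand$, giving $\Var(\Urand-\U) \leq 12 n \sup_i (\hrand_i)^2 + 6n + 3m$, and then invoke the coefficient bound \eqref{eq:coeff_rand}. That bound, $\sup_i |\hrand_i| \leq \Expect[t^\rand]$, uses only $\hrand_i = -(-t)^i \Prob(\rand\geq i)$ and $t\geq 1$, hence holds verbatim here; it yields $\Var(\Urand-\U) \leq 12 n\, \EE^2[t^\rand] + 6n + 3m$.

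For the bias term, \prettyref{lem:hyper_bias} already bounds $|\Expect[\Urand-\U]|$ by the sum of the three nonnegative quantities $nt\cdot K$, $4\Expect[t^\rand]$, and $\tfrac{2\hypN}{\hypN-n}$, where $K \ed \max_{0\leq s\leq 1} |\EE[\binom{\hypn_x-1}{L}(-s)^L]|$. Squaring via $(a+b+c)^2 \leq 3(a^2+b^2+c^2)$ and using $nt=m$ gives $(\Expect[\Urand-\U])^2 \leq 3m^2 K^2 + 48\,\EE^2[t^\rand] + \tfrac{12\hypN^2}{(\hypN-n)^2}$. Since $t\geq 1$ and $L\geq 0$ almost surely, $\Expect[t^\rand]\geq 1$, so the residual $\EE^2[t^\rand]$ contribution can be absorbed into the $n\,\EE^2[t^\rand]$ term at the cost of the constant; adding the variance bound then produces the claimed inequality $12(n+1)\EE^2[t^\rand] + 6n + 3m + \tfrac{12\hypN^2}{(\hypN-n)^2} + 3m^2 K^2$.

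I do not expect a real obstacle. The one point that needs a little care is that $K$ depends on the unknown multiplicities $\hypn_x$ through $\binom{\hypn_x-1}{L}$, so when the per-symbol bias terms $B(\hrand,\hypn_x)$ are summed in the proof of \prettyref{lem:hyper_bias} one must pass to the worst case over $x$ (or carry the dependence along); and one should confirm that the way the squared bias is split leaves the advertised constants intact, the safe alternative being to keep the three bias pieces separate in the final display rather than merging them into a single constant.
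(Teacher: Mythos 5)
Your proposal is exactly the paper's argument: the theorem is stated immediately after "Combining \prettyref{lem:hyper_bias} and \prettyref{lem:hyper_var} gives the following NMSE bound" with no further proof, and the intended mechanism is precisely the bias--variance decomposition plus the coefficient bound $\sup_i|\hrand_i|\leq\EE[t^\rand]$ from \eqref{eq:coeff_rand} that you invoke. One small quantitative remark: carrying out the bookkeeping as you describe, $(a+b+c)^2\leq 3(a^2+b^2+c^2)$ applied to the three bias pieces $nt\cdot K$, $4\EE[t^\rand]$, and $2\hypN/(\hypN-n)$ gives $3m^2K^2 + 48\,\EE^2[t^\rand] + 12\hypN^2/(\hypN-n)^2$, and adding $12n\,\EE^2[t^\rand]$ from the variance yields $12(n+4)\,\EE^2[t^\rand]$ rather than the stated $12(n+1)\,\EE^2[t^\rand]$; this cannot be repaired by reweighting the Cauchy--Schwarz split (the coefficient on $a^2$ and $c^2$ is already $3$, which forces at least $3$ on $b^2$), so the displayed $(n+1)$ appears to be a small typo in the paper, and your instinct to keep the three bias terms separate (or simply write $n+4$) is the right fix. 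You are also right that the quantity $\max_\alpha|\EE[\binom{\hypn_x-1}{L}(-\alpha)^L]|$ carries a tacit maximum over $\hypn_x$ (equivalently over $x$) inherited from summing the per-symbol bias bounds in \prettyref{lem:hyper_bias}; the paper relies on the Laguerre bound being uniform in the degree when this is later specialized in \prettyref{cor:hyper}.
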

As before, we can choose various smoothing distribution and obtain upper bounds on the mean squared error.
\begin{Corollary}
\label{cor:hyper}
If $\rand\sim\poi(r)$ and $\hypN-n \geq m \geq n$, then
\[
\Expect[(\UL - \U)^2] \le 12(n+1)e^{2r(t-1)} +  3m^2 e^{-r} + 9m + 48.
\]
Furthermore, if $r = \frac{1}{2t-1} \cdot \log (nt^2)$,
\[
\LELnt
 \leq \frac{27}{(nt^2)^{\frac{1}{2t-1}}} + \frac{9nt + 48}{(nt)^2}.
\]
\end{Corollary}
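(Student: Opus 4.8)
The plan is to specialize the general hypergeometric bound \prettyref{thm:hyper} to the Poisson smoothing $L\sim\poi(r)$, reusing the moment identity $\Expect[t^L]=e^{r(t-1)}$ from the Poisson analysis and the Laguerre-polynomial estimate from the binomial analysis. So the proof is essentially a substitution followed by elementary simplifications.

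First I would recall from \prettyref{eq:poivar} that $\Expect[t^L]=e^{r(t-1)}$ for $L\sim\poi(r)$, so the leading term $12(n+1)\EE^2[t^L]$ in \prettyref{thm:hyper} is exactly $12(n+1)e^{2r(t-1)}$. The only genuinely new quantity to control is $\max_{0<\alpha\le1}\lvert\EE[\binom{\hypn_x-1}{L}(-\alpha)^L]\rvert$. Expanding the Poisson expectation term by term,
\[
\EE\left[\binom{\hypn_x-1}{L}(-\alpha)^L\right]
= e^{-r}\sum_{j=0}^{\hypn_x-1}\binom{\hypn_x-1}{j}\frac{(-\alpha r)^j}{j!}
= e^{-r}\,L_{\hypn_x-1}(\alpha r),
\]
where $L_{\hypn_x-1}$ is the Laguerre polynomial of \prettyref{eq:lag}. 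Invoking the bound $|L_k(y)|\le e^{y/2}$ (\prettyref{eq:lagbound}), which is uniform in the degree $k$ and valid for all $y\ge0$, gives $\lvert\EE[\binom{\hypn_x-1}{L}(-\alpha)^L]\rvert\le e^{-r}e^{\alpha r/2}\le e^{-r/2}$ for $\alpha\le1$, so the last term of \prettyref{thm:hyper} is at most $3m^2e^{-r}$. Since this estimate does not depend on $x$, the variation of $\hypn_x$ across symbols causes no difficulty.

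It then remains to absorb the lower-order terms using the hypothesis $\hypN-n\ge m\ge n$: from $\hypN\ge n+m\ge2n$ we get $\hypN-n\ge\hypN/2$, so $12\hypN^2/(\hypN-n)^2\le48$, while $m\ge n$ gives $6n+3m\le9m$. Collecting terms yields the first displayed bound. For the second, substitute $r=\frac1{2t-1}\log(nt^2)$, so that $e^{-r}=(nt^2)^{-1/(2t-1)}$ and $e^{2r(t-1)}=(nt^2)^{2(t-1)/(2t-1)}$; dividing by $(nt)^2$ and using $m=nt$ together with $n+1\le2n$, one checks $\frac{12(n+1)e^{2r(t-1)}}{(nt)^2}\le24(nt^2)^{-1/(2t-1)}$ (the exponent arithmetic is $\frac{2(t-1)}{2t-1}-1=-\frac1{2t-1}$) and $\frac{3m^2e^{-r}}{(nt)^2}=3(nt^2)^{-1/(2t-1)}$, whose sum is $27(nt^2)^{-1/(2t-1)}$, while the remaining term $\frac{9m+48}{(nt)^2}$ is $\frac{9nt+48}{(nt)^2}$, giving the claimed NMSE bound.

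I do not expect a serious obstacle: the statement is a direct consequence of \prettyref{thm:hyper} plus an identity that is close to one already used for binomial smoothing. The only points needing care are recognizing the Poisson-smoothed kernel $\EE[\binom{k-1}{L}(-\alpha)^L]$ as $e^{-r}L_{k-1}(\alpha r)$ and applying the Laguerre bound uniformly in the degree, and the bookkeeping of the constants and exponents when passing from the absolute squared error to the normalized error.
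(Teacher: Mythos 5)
Your proof is correct and follows essentially the same route as the paper: specialize Theorem~\ref{thm:hyper} to $L\sim\poi(r)$, recognize $\EE[\binom{\hypn_x-1}{L}(-\alpha)^L]=e^{-r}L_{\hypn_x-1}(\alpha r)$, apply the Laguerre bound \prettyref{eq:lagbound} to get $e^{-r/2}$ uniformly in $\hypn_x$, and use $\hypN-n\ge m\ge n$ to absorb the lower-order terms. The exponent bookkeeping $\tfrac{2(t-1)}{2t-1}-1=-\tfrac1{2t-1}$ and the resulting $24+3=27$ constant match the paper's (more terse) derivation.
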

\begin{proof}
For $\rand\sim\poi(r)$, $\Expect[t^\rand] = e^{r(t-1)}$ and 
\[
\max_{0 \leq \alpha \leq 1}  \left \lvert  \EE \left[{\hypn_x-1 \choose L} (-\alpha)^L \right] \right \rvert
= e^{-r} \max_{0 \leq \alpha \leq 1} |L_{\hypn_x-1} \left(\alpha r \right)| \leq e^{-r/2},
\]
where $L_{\hypn_x-1}$ is the Laguerre polynomial of degree $\hypn_x-1$ defined in \prettyref{eq:lag} and the last equality follows the bound \prettyref{eq:lagbound}.
Furthermore, $\hypN/(\hypN-n) = 1 + n/(\hypN-n) \leq 1 + n/m \leq 2$ and $n \leq m$, and hence the first part of the lemma.
The second part follows by substituting the value of $r$.
\end{proof}
%
\section{Lower bounds}
\label{sec:lower}
Under the multinomial model (i.i.d.~sampling), we lower bound the risk
$\LEnt$ for any estimator $\UE$ using the support size estimation
lower bound in~\cite{WY14b}. Since the lower bound in~\cite{WY14b}
also holds for the Poisson model, so does our lower bound.

Recall that for a discrete distribution $p$, $S(p) = \sum_x \indic_{p_x > 0}$ denotes its support size. 
It is shown that given $n$ i.i.d.~samples drawn from a distribution $p$ whose minimum non-zero mass $p^+_{\min}$ is at least $1/k$, the minimax mean-square error for estimating $S(p)$ satisfies
\begin{equation}
\min_{\hat{S}} \max_{p : p^+_{\min} \geq 1/k} \Expect[(\hat{S} - S(p))^2] \geq c' k^2 \cdot \exp \left(- c \max\left(\sqrt{\frac{n\log k}{k}}, \frac{n}{k} \right) \right).	
	\label{eq:supplb}
\end{equation}
where $c,c'$ are universal positive constants with $c > 1$. We prove Theorem~\ref{thm:res_lb} under the multinomial model with $c$ being the universal constant from \prettyref{eq:supplb}. 

Suppose we have an estimator $\Ugm$ for $\Up$ that can accurately predict the number of new symbols arising in the next $m$ samples, 
we can then produce an estimator for the support size by adding the number of symbols observed, $\prev{+}$, in the current $n$ samples, namely,
\begin{equation}
\hat{S} = \Ugm +  \prev{+}.
	\label{eq:hatS}
\end{equation}
Note that $\Up = \sum_x \indic_{N_x=0} \indic_{N'_x>0}$.
When $m=\infty$, 
$U$ is the total number of unseen symbols and we have $S(p) = U + \prev{+}$. Consequently, if $\Ugm$ can foresee too far into the future (\ie for too large an $m$), then \prettyref{eq:hatS} will constitute a support size estimator that is too good to be true. 

Combining \prettyref{thm:res_lb} with the positive result (\prettyref{cor:poi_tail} or \ref{cor:bin_tail}) 
yields the following characterization of the minimax risk:
\begin{Corollary}
\label{cor:rate}
For all $t \geq c$, we have
\[
\inf_{\UE} \LEnt = \exp\pth{ - \Theta\pth{\max\sth{\frac{\log n}{t}, 1}}}
\]
Consequently, as $n\to\infty$, the minimax risk $\inf_{\UE} \LEnt \to 0$ if and only if $t = o(\log n)$.
\end{Corollary}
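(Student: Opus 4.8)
The plan is to read off Corollary~\ref{cor:rate} by sandwiching $\inf_{\UE}\LEnt$ between the upper bound of \prettyref{thm:res_main} (equivalently \prettyref{cor:poi_tail}) and the lower bound of \prettyref{thm:res_lb}, and then checking that each of these two bounds already has the advertised form $\exp(-\Theta(\max\{(\log n)/t,1\}))$ after an elementary rearrangement of the exponent. It is convenient to write $x \ed \frac{\log n}{t}$, so that the goal becomes $\inf_{\UE}\LEnt = \exp(-\Theta(\max\{x,1\}))$ for all $t\ge c$.

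For the upper bound I would combine two facts. First, since $0 \le \U \le nt$ (in the multinomial model; up to an $o(1)$ correction in the other three), the estimator that always predicts $nt/2$ has NMSE at most $1/4$, so $\inf_{\UE}\LEnt \le 1/4$ unconditionally. Second, \prettyref{cor:poi_tail} together with the extensions of \prettyref{sec:extensions} gives an SGT estimator with $\LELnt \le 3\,n^{-1/t} = 3 e^{-x}$ for every $t\ge1$. Hence $\inf_{\UE}\LEnt \le \min\{\tfrac14,\,3e^{-x}\}$, and a short case check --- split on $x<1$ versus $x\ge1$, and in the latter case on whether $3e^{-x}$ lies above or below $\tfrac14$ --- yields $\min\{\tfrac14,\,3e^{-x}\}\le e^{-\frac12\max\{x,1\}}$. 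I expect this splicing to be the only mildly delicate point: in the regime $t\gtrsim\log n$ the bound $3n^{-1/t}$ tends to $3$ and is useless on its own, so it is the trivial $\tfrac14$ that produces the ``$1$'' in the maximum, and one needs the prefactor $3$ in \prettyref{cor:poi_tail} to be strictly below $4$ for the two estimators to patch cleanly.

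For the matching lower bound, \prettyref{thm:res_lb} already gives $\inf_{\UE}\LEnt \gtrsim n^{-c'/t} = e^{-c'x}$ for all $t\ge c$; since $x \le \max\{x,1\} \le x+1$ one absorbs the implicit constant into the exponent to obtain $\inf_{\UE}\LEnt \ge e^{-C_2\max\{x,1\}}$ for an absolute constant $C_2$ and all $t\ge c$ (for $x\ge1$ this is $e^{-c'x}$ up to the constant, and for $x<1$ one simply uses $e^{-c'x}>e^{-c'}$). Combining this with the upper bound establishes the displayed identity. The ``consequently'' clause then falls out by inspecting the two sides as $n\to\infty$: if $t = o(\log n)$ then $x\to\infty$, and the upper bound (which needs only $t\ge1$) forces $\inf_{\UE}\LEnt\to 0$; conversely, if $t\ne o(\log n)$, then along a subsequence $t\ge\alpha\log n$ for some fixed $\alpha>0$, so eventually $t\ge c$ while $\max\{x,1\}$ stays bounded by $\max\{1/\alpha,1\}$, and the lower bound keeps $\inf_{\UE}\LEnt$ bounded away from $0$. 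Hence $\inf_{\UE}\LEnt\to 0$ if and only if $t = o(\log n)$.
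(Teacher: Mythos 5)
Your proposal is correct and takes essentially the same approach as the paper, which states no explicit proof for \prettyref{cor:rate} beyond the one-line remark that it follows by combining \prettyref{thm:res_lb} with \prettyref{cor:poi_tail} (or \prettyref{cor:bin_tail}); you are simply filling in the elementary case analysis that splices the trivial $\tfrac14$ bound with the $3n^{-1/t}$ bound. One small nitpick: the observation that the prefactor $3$ must be strictly below $4$ is an artifact of fixing the exponent at $\tfrac12$ --- for any universal prefactor $C\ge 1$ the same patching goes through by choosing a smaller exponent $\gamma = \log 4/\log(4C) > 0$, so no fortuitous numerology is actually required for the $\Theta$ statement.
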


\begin{proof}[Proof of \prettyref{thm:res_lb}]
Recall that $m=nt$.
Let $\hat U$ be an arbitrary estimator for $U$.
For the support size estimator $\hat S = \hat{U} + \prev{+}$ defined in \prettyref{eq:hatS}, it must obey the lower bound \prettyref{eq:supplb}. Hence there exists some $p$ satisfying $p_{\min}^+ \geq 1/k$,
such that
\begin{equation}
\Expect[(S(p)- {\hat{S}})^2] \geq c' k^2 \cdot \exp \left(- c\max\left(\sqrt{\frac{n\log k}{k}}, \frac{n}{k} \right) \right).	
	\label{eq:supplb1}
\end{equation}
Let $S=S(p)$ denote the support size, which is at most $k$. 
Let $\tilde{U} \ed \EE_{X^{n+m}_{n+1}}[U]$ be the expectation of $U$ over the unseen samples $X^{n+m}_{n+1}$ conditioned on the available samples $X_1^n$. Then
$\tilde U = \sum_x \indic_{\mul{x} = 0} \left(1 - (1-p_x)^{nt} \right)$.
Since the estimator $\hat{U}$ is independent of $X^{n+m}_{n+1}$, by convexity,
\begin{equation}
\EE_{X^{n+m}_1}
[(U-\hat{U})^2] \geq 
\EE_{X^{n}_{1}}[(\EE_{X^{n+m}_{n+1}}[U-\hat{U}])^2] = \Expect[(\tilde{U}-\hat{U})^2].
\label{eq:UUtilde}
\end{equation}
Notice that with probability one,
\begin{equation}
|S - \tilde{U} - \prev{+} |\leq  Se^{-nt/k}	 \leq k e^{-nt/k},
	\label{eq:Snewt}
\end{equation}
which follows from
\[
\tilde{U} +  \prev{+}
=  \sum_{x: p_x > 0} \indic_{\mul{x} = 0} \left(1 - (1-p_x)^{nt} \right) +  \indic_{\mul{x} > 0}  \leq  S,
\]
and, on the other hand, 
\begin{align*}
\tilde{U} +  \prev{+}
&=  \sum_{x: p_x \geq 1/k} \indic_{\mul{x} = 0} \left(1 - (1-p_x)^{nt} \right) +  \indic_{\mul{x} > 0}  \\
& \geq  \sum_x \indic_{\mul{x} = 0} \left(1 - (1-1/k)^{nt} \right) +  \indic_{\mul{x} > 0}  \geq S(1 -  (1-1/k)^{nt}) \geq S(1-e^{-nt/k}).
\end{align*}
Expanding the left hand side of \prettyref{eq:supplb1},
\begin{align*}
\Expect[(S - {\hat{S}})^2]
& = \EE\qth{\pth{  S - \tilde{U} - \prev{+} + \tilde{U} - \hat{U} }^2} \leq  2\Expect[(S - \tilde{U} - \prev{+} )^2] + 2\Expect[ (\tilde{U} - \hat{U}))^2] \\
& \overset{\prettyref{eq:Snewt}}{\leq} 2 k^2 e^{-2nt/k}  + 2\Expect[ (\tilde{U} - \hat{U}))^2] \overset{\prettyref{eq:UUtilde}}{\leq} 2 k^2 e^{-2nt/k}  + 2\Expect[ (U - \hat{U}))^2] \\
\end{align*}
Let
\[
k = \min\sth{\frac{nt^2}{c^2 \log \frac{nt^2}{c^2}}, \frac{nt}{\log \frac{4}{c'}}},
\]
which ensures that
\begin{equation}
c' k^2 \cdot \exp \pth{- c \max\sth{ \sqrt{\frac{n\log k}{k}}, \frac{n}{k} } } \geq 4 k^2 e^{-2nt/k}.	
	\label{eq:lb1}
\end{equation}
 Then
\[
\Expect[ (\U - \hat{U})^2] \geq k^2 e^{-2nt/k},
\]
establishes the following lower bound  with 
$\alpha \ed \frac{c'^2}{4 \log^2 (4/c')}$ 
and $\beta \ed c^2$:
\begin{equation*}
\min_{E} \LEnt
\geq \min\sth{\alpha, \frac{4t^2}{\beta^2 \log^2 \frac{nt^2}{\beta}} \pth{\frac{\beta}{nt^2}}^{2\beta/t}}.	
\end{equation*}
To verify \prettyref{eq:lb1}, since $t \geq c$ by assumption, we have
$\exp(\frac{2tn}{k} - \frac{cn}{k}) \geq \exp(\frac{nt}{k}) \geq \frac{4}{c'}$.  Similarly, 
since $k \log k \leq \frac{nt^2}{c^2}$ by definition, we have
$\frac{2nt}{k} \geq 2 c' \sqrt{\frac{n \log k}{k}}$ and hence $\exp\big(\frac{2tn}{k} - c\sqrt{\frac{n \log k}{k}}\big) \geq \exp(\frac{nt}{k}) \geq \frac{4}{c'}$, completing the proof of \prettyref{eq:lb1}.

Thus we have shown that there exist universal
positive constants $\alpha,\beta$ such that
\begin{equation*}
\min_{E} \LEnt
\geq \min\sth{\alpha, \frac{4t^2}{\beta^2 \log^2 \frac{nt^2}{\beta}} \pth{\frac{\beta}{nt^2}}^{2\beta/t}}.	
\end{equation*}
Let $y = \left( \frac{nt^2}{\beta}\right)^{2\beta/t}$, then
\[
\min_{E} \LEnt \geq \min\sth{\alpha, 16 \frac{1}{y\log^2 y}}.
\]
Since $y > 1$, $y^3 \geq y \log^2 y$ and hence for some constants $c_1, c_2 > 0$,
\[
\min_{E} \LEnt \geq \min\sth{\alpha, 16 \frac{1}{y^3}} 
\geq \min \sth{\alpha, \left( \frac{\beta}{nt^2}\right)^{6\beta/t}} 
\geq c_1 \min \sth{1,  \left( \frac{1}{n}\right)^{c_2/t}}
\geq \frac{c_1}{n^{c_2/t}}. \qedhere
\]
\end{proof}
\section{Experiments}
\label{sec:experiments}
We demonstrate the efficacy of our estimators by comparing their
performance with that of several state-of-the-art support-size estimators
currently used by ecologists: 
Chao-Lee estimator~\cite{C84, CL92},
Abundance Coverage Estimator (ACE)~\cite{C05},
and the jackknife estimator~\cite{SV84},
combined with the Shen-Chao-Lin unseen-species estimator~\cite{SCL03}.
We consider various natural synthetic distributions and established datasets.
Starting with the former, \arxiv{Figure~\ref{fig:synthetic}}{(Fig.\ 2)} shows the \emph{species discovery curve}, 
the prediction of $U$ as a function of $t$ of several
predictors for various distributions. 
The true value is shown in black, and the other estimators are
color coded, with the solid line representing their mean estimate,
and the shaded area corresponding to one standard deviation.
Note that the Chao-Lee and ACE estimators are designed specifically
for uniform distributions, hence in \arxiv{Figure~\ref{fig:uniform}}{(Fig.\ 2a)} they coincide with the
true value, but for all other distributions, our proposed smoothed Good-Toulmin
estimators outperform the existing ones.
\begin{figure}
\centering
\subfigure[Uniform]{\label{fig:uniform}\arxiv{\includegraphics[width=60mm]{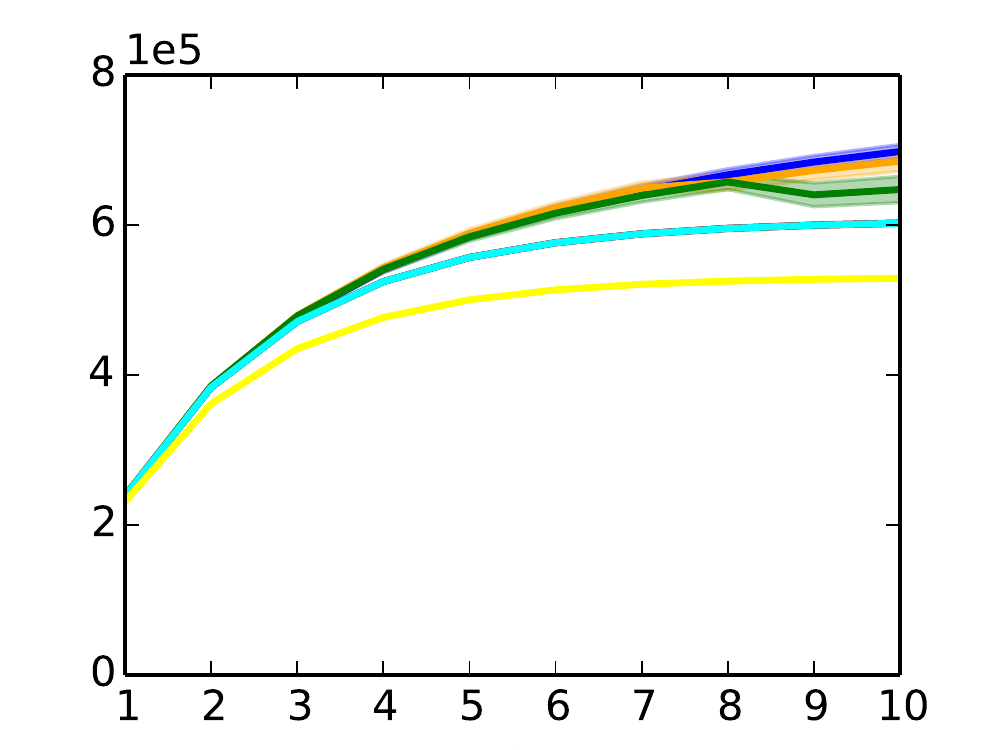}}{\includegraphics[width=60mm]{syn_uniform.pdf}}}
\subfigure[Two steps: $\frac{1}{2k} \times \frac{k}{2} \cup \frac{3}{2k} \times \frac{k}{2}$  ]{\label{fig:step}\arxiv{{\includegraphics[width=60mm]{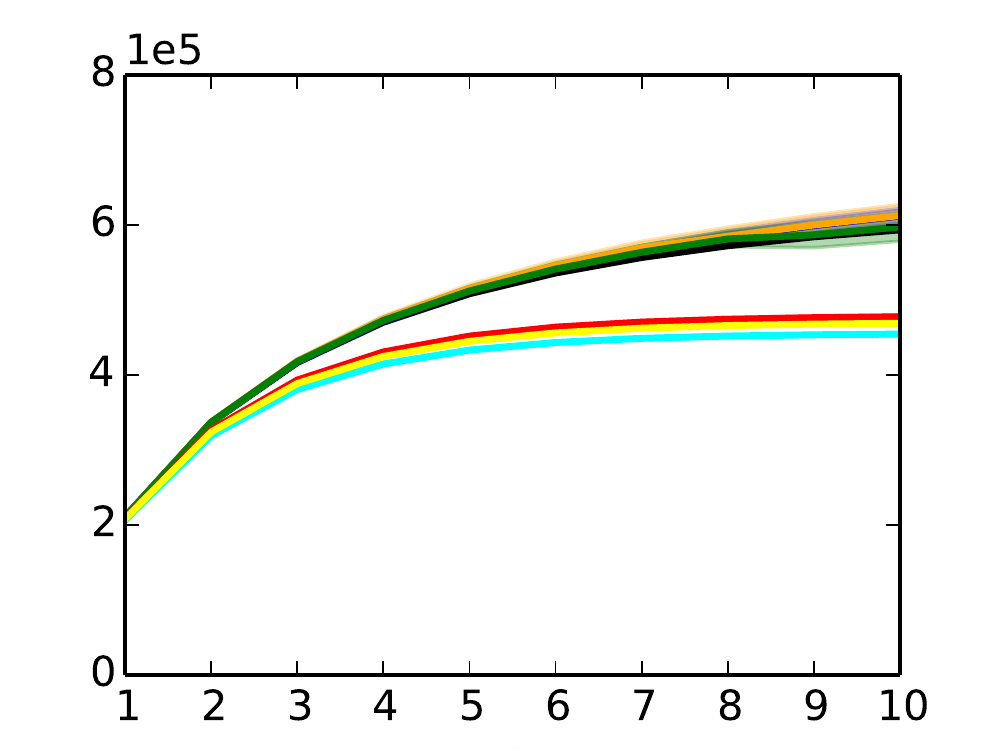}}}{\includegraphics[width=60mm]{syn_step.pdf}}}
\subfigure[Zipf-$1$: $p_i \propto \frac{1}{i}$]{\label{fig:zipf1}\arxiv{\includegraphics[width=60mm]{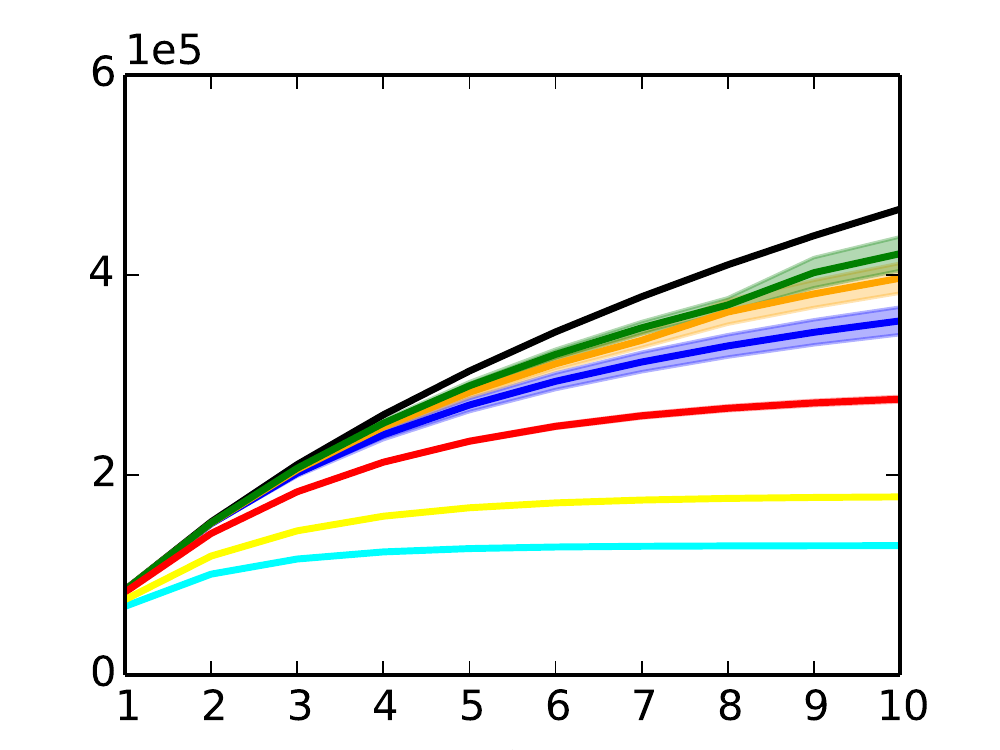}}{\includegraphics[width=60mm]{syn_zipf1.pdf}}}
\subfigure[Zipf-$1.5$: $p_i \propto \frac{1}{i^{1.5}}$]{\label{fig:zipf32}\arxiv{\includegraphics[width=60mm]{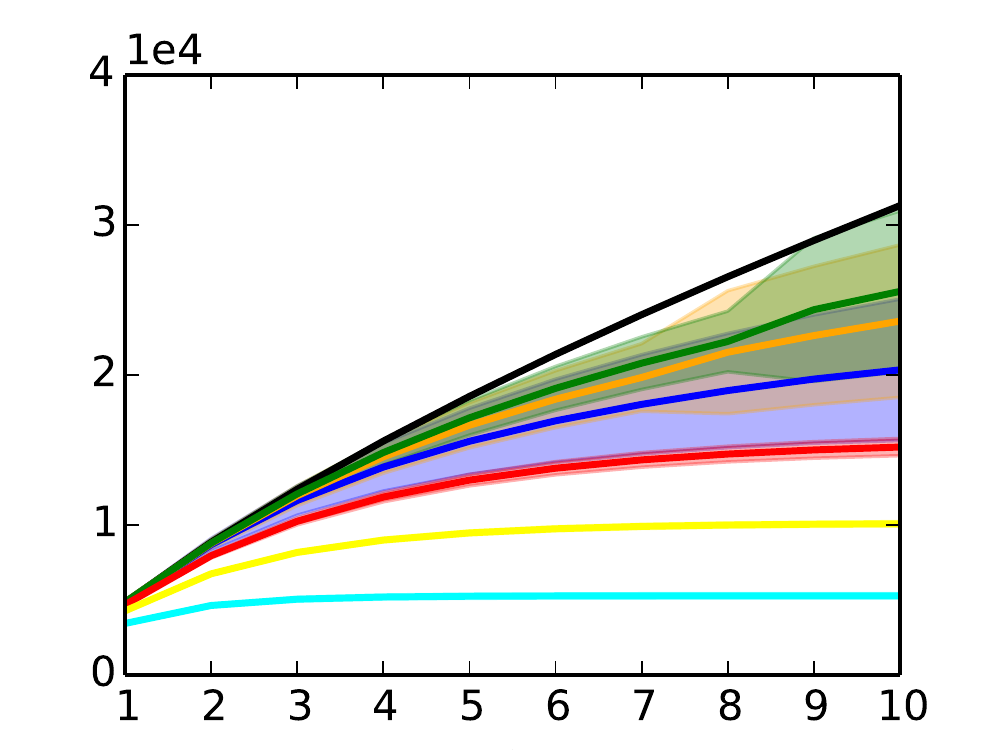}}{\includegraphics[width=60mm]{syn_zipf32.pdf}}}
\subfigure[Dirichlet-$1$ prior]{\label{fig:dir1}\arxiv{\includegraphics[width=60mm]{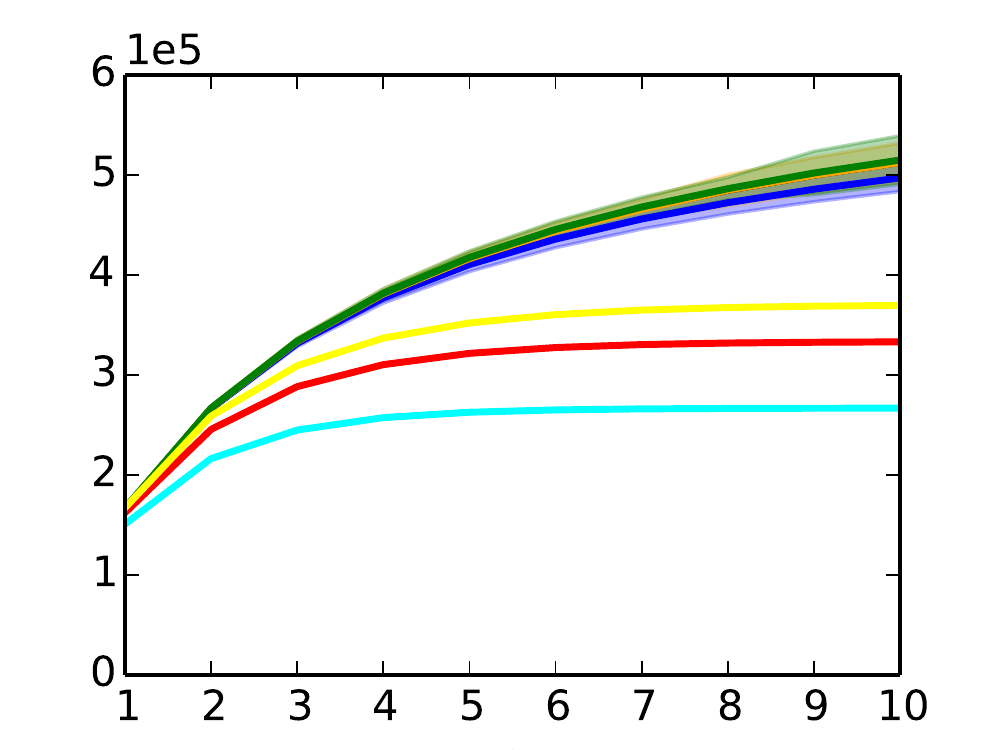}}{\includegraphics[width=60mm]{syn_dir1.pdf}}}
\subfigure[Dirichlet-$1/2$ prior]{\label{fig:dir12}\arxiv{\includegraphics[width=60mm]{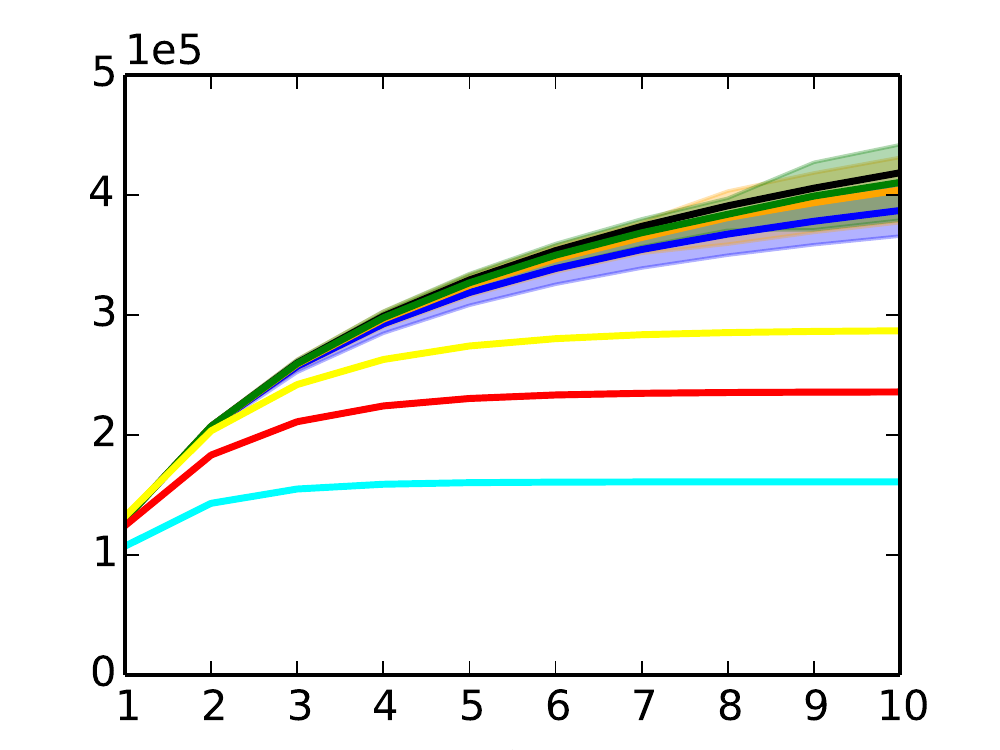}}{\includegraphics[width=60mm]{syn_dir12.pdf}}}
\begin{tabular}{| c                                                                                                                       | l l | l l |}
\hline
True value  & Previous & &  Proposed & \\ \hline 
 {\color{black} \rule[0.08cm]{0.05\textwidth}{1pt}}    & Chao-Lee  & {\color{myred} \rule[0.08cm]{0.05\textwidth}{1pt}} & Poisson smoothing  & {\color{myblue}\rule[0.08cm]{0.05\textwidth}{1pt}}  \\
       & ACE   & {\color{mycyan} \rule[0.08cm]{0.05\textwidth}{1pt}} & Binomial smoothing $q=1/(t+1)$ &  {\color{myorange} \rule[0.08cm]{0.05\textwidth}{1pt}}\\ 
       & Jackknife  & {\color{myyellow} \rule[0.08cm]{0.05\textwidth}{1pt}}  & Binomial smoothing $q = 2/(t+2)$ & {\color{mygreen} \rule[0.08cm]{0.05\textwidth}{1pt}} \\
\hline
\end{tabular}
\caption{Comparisons of the estimated number of unseen species 
as a function of $t$. All experiments have distribution support size $10^6$,
$n=5 \cdot 10^5$, and are averaged over $100$ iterations.}
\label{fig:synthetic}
\end{figure}
Of the proposed estimators, the binomial-smoothing estimator 
with parameter $q = \frac{2}{2+t}$ has a stronger theoretical
guarantee and performs slightly better than the others.
Hence when considering real data we plot only its performance
and compare it with the other state-of-the art estimators.
We test the estimators on three real datasets taken from various
scientific applications where the samples size $n$ ranges from few
hundreds to a million. For all these date sets, our estimator 
outperforms the existing procedures.

\begin{figure}
\centering
\subfigure[Hamlet random]{\label{fig:hamlet_random}\arxiv{\includegraphics[scale=0.35]{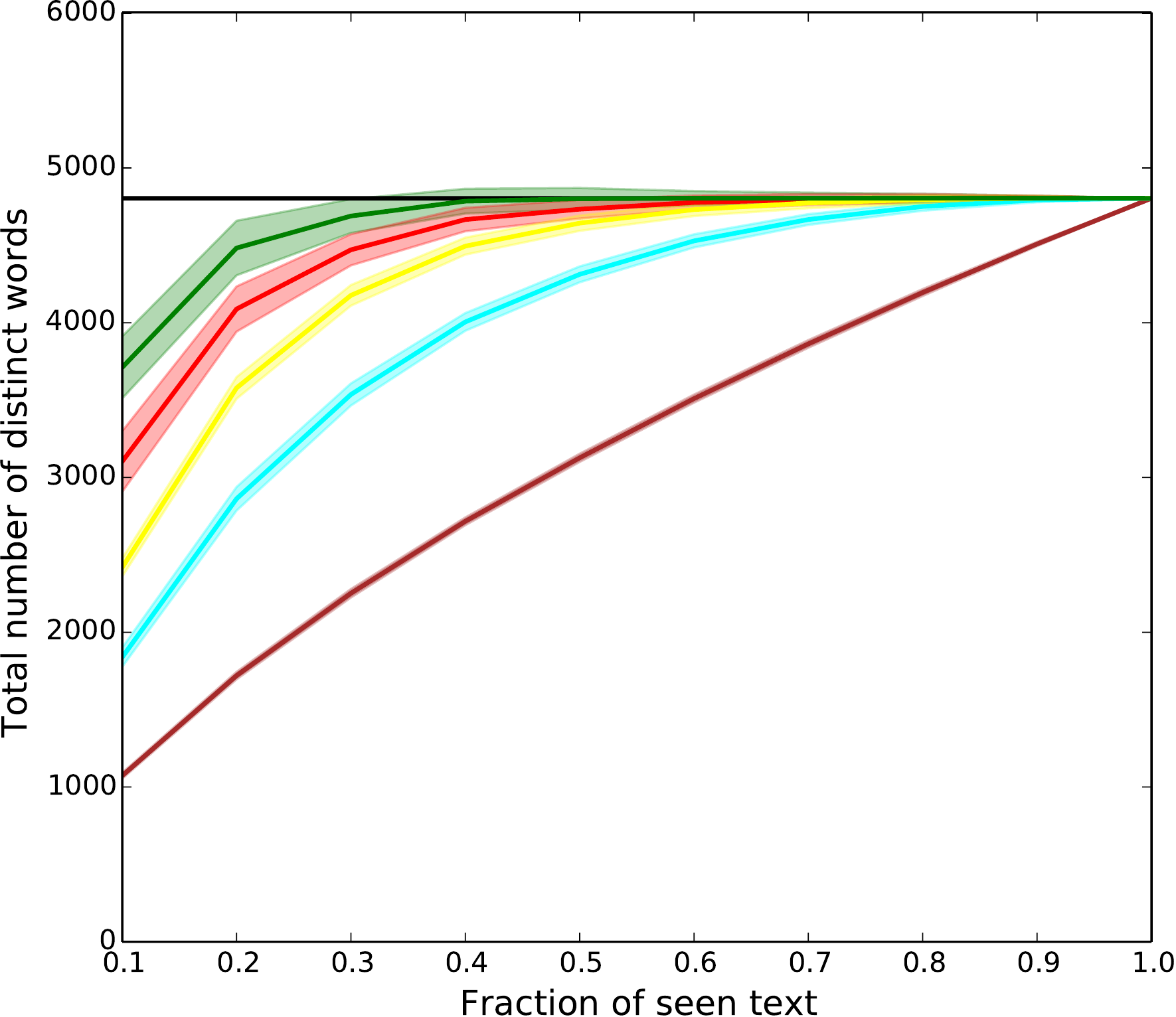}}{\includegraphics[scale=0.35]{Hamlet_data-crop.pdf}}}
\subfigure[Hamlet consecutive]{\label{fig:hamlet_consecutive}\arxiv{\includegraphics[scale=0.35]{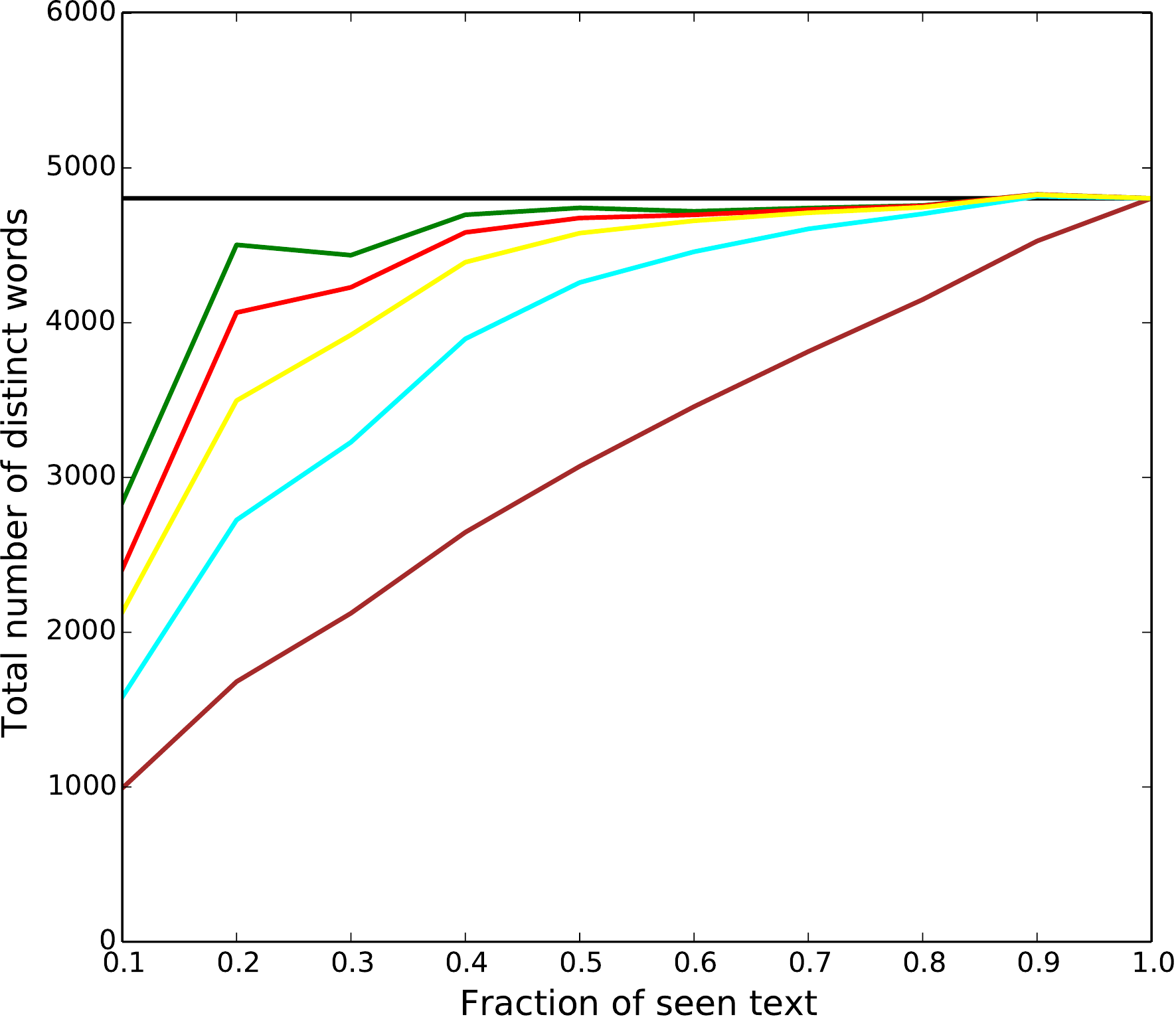}}{\includegraphics[scale=0.35]{Hamlet_consecutive-crop.pdf}}}
\subfigure[SLOTUs]{\label{fig:haplo}\arxiv{\includegraphics[scale=0.35]{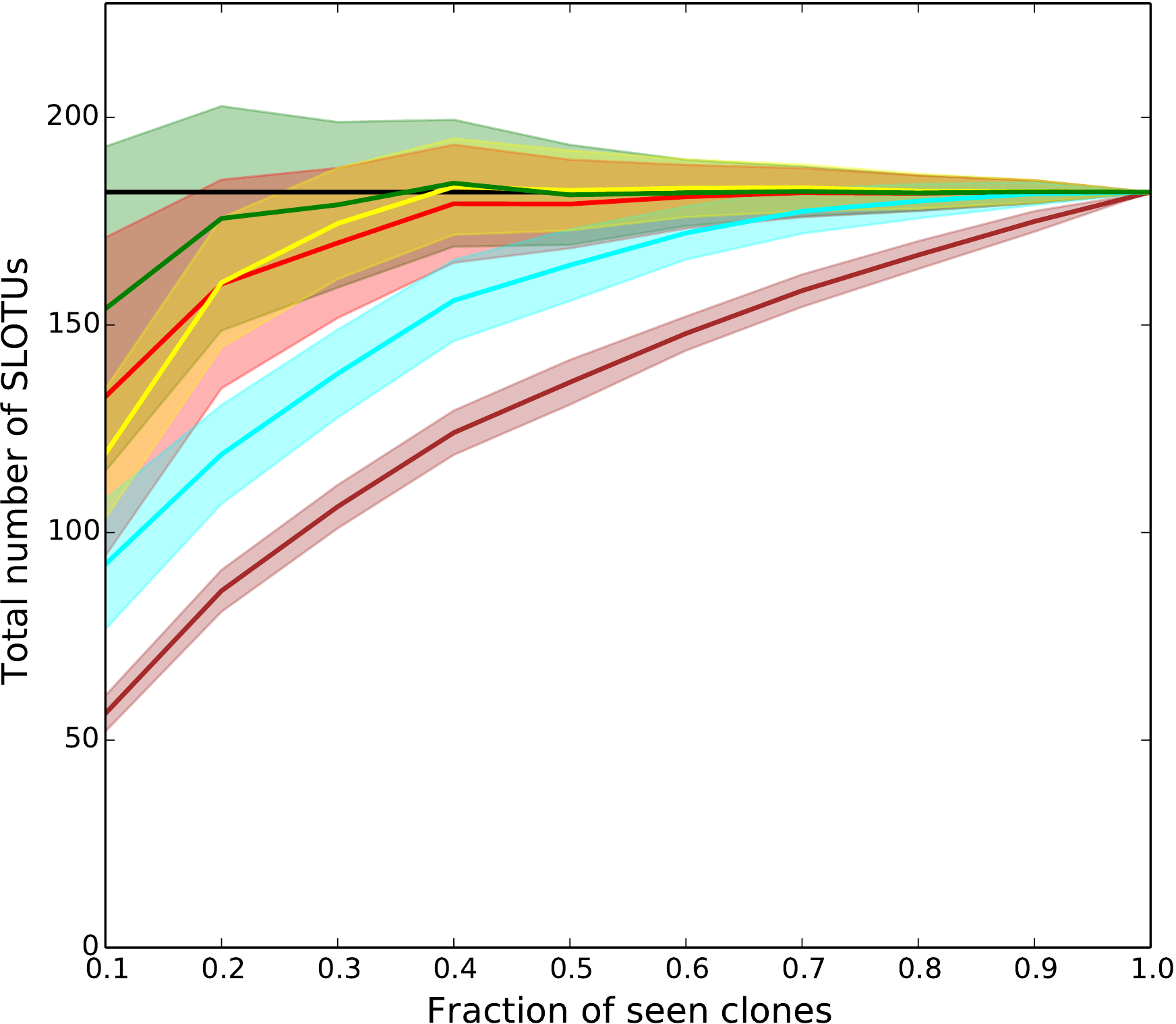}}{\includegraphics[scale=0.35]{Haplo_data-crop.pdf}}}
\subfigure[Last names]{\label{fig:name}\arxiv{\includegraphics[scale=0.35]{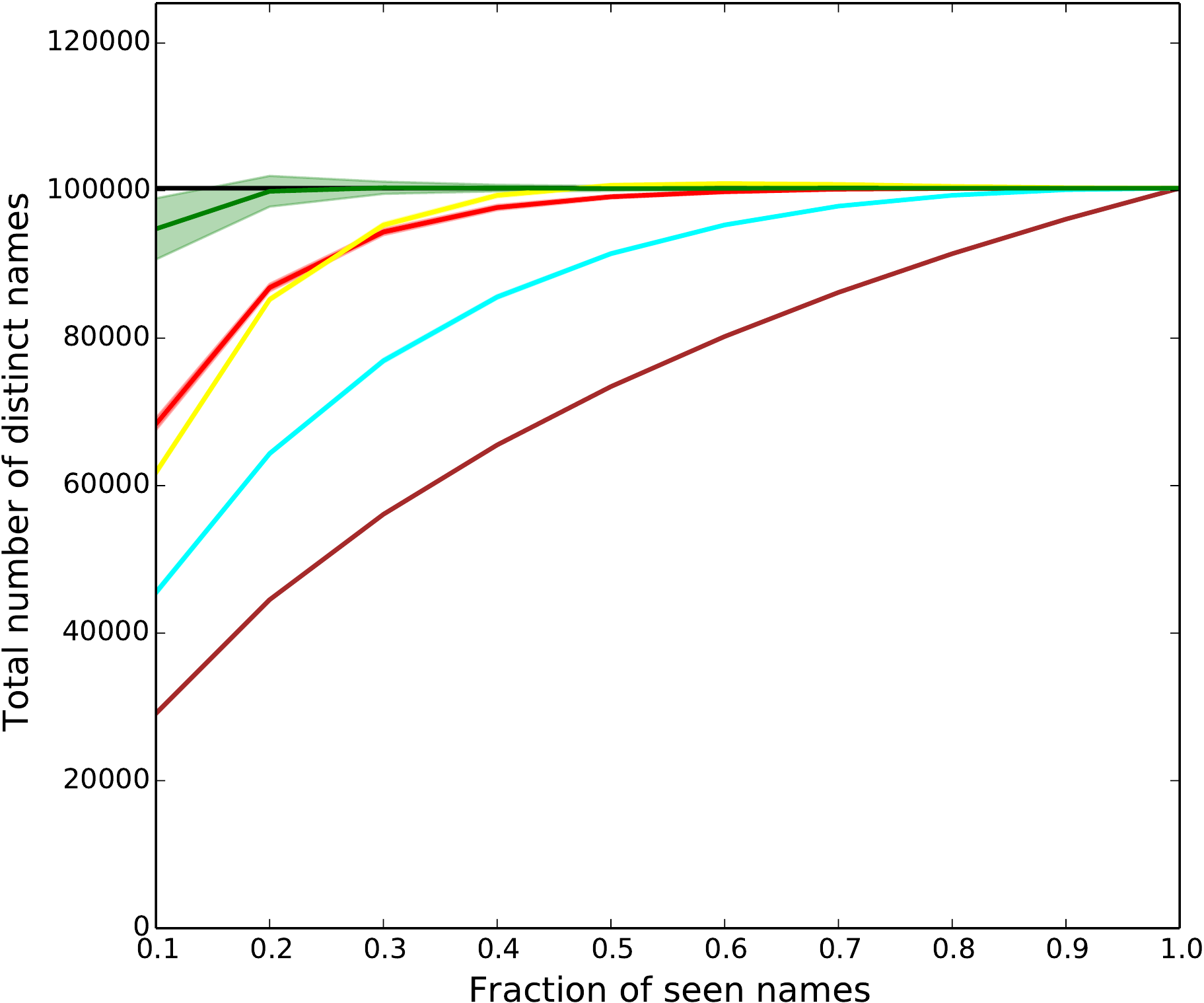}}{\includegraphics[scale=0.35]{Name_data-crop.pdf}}}
\begin{tabular}{| c                                                                                                                       | l l | l l |}
\hline
True value  & Previous & &  Proposed & \\ \hline 
 {\color{black} \rule[0.08cm]{0.05\textwidth}{1pt}}   
 & Chao-Lee  
& {\color{myred} \rule[0.08cm]{0.05\textwidth}{1pt}} 
& Binomial smoothing $q = 2/(t+2)$ & {\color{mygreen} \rule[0.08cm]{0.05\textwidth}{1pt}} \\
       & ACE   & {\color{mycyan} \rule[0.08cm]{0.05\textwidth}{1pt}}   & & \\ 
       & Jackknife  & {\color{myyellow} \rule[0.08cm]{0.05\textwidth}{1pt}}  &  & \\
& Empirical & {\color{mybrown} \rule[0.08cm]{0.05\textwidth}{1pt}}   & & \\ 
\hline
\end{tabular}
\caption{Estimates for number of: $(a)$ distinct words in Hamlet with
random sampling $(b)$ distinct words in Hamlet with consecutive
sampling $(c)$ SLOTUs on human skin $(d)$ last names.}
\label{fig:hamlet}
\end{figure}

\arxiv{Figure~\ref{fig:hamlet_random}}{(Fig.\ 3a)} shows the first real-data experiment, predicting
vocabulary size based on partial text. 
Shakespeare's play \emph{Hamlet} consists of $\ntotal=31999$ words,
of which $4804$ are distinct.
We randomly select $n$ of the $\ntotal$ words without replacement,
predict the number of unseen words in $\ntotal-n$ new ones,
and add it to those observed.
The results shown are averaged over $100$ trials.
Observe that the new estimator outperforms existing ones and
that as little as $20\%$ of the data 
already yields an accurate estimate of the total number of
distinct words. 
\arxiv{Figure~\ref{fig:hamlet_consecutive}}{(Fig.\ 3b)} repeats the experiment but instead of random sampling,
uses the first $n$ consecutive words, with similar conclusions.

\arxiv{Figure~\ref{fig:haplo}}{(Fig.\ 3c)} estimates the number of bacterial species on the human skin.
\cite{GCB07} considered forearm skin biota of six subjects.
They identified $\ntotal=1221$ clones consisting
of $182$ different species-level operational taxonomic units (SLOTUs).
As before, we select $n$ out of the $\ntotal$ clones without replacement and
predict the number of distinct SLOTUs found.
Again the estimates are more accurate than those of existing
estimators and are reasonably accurate already with $20\%$ of the data.

Finally, \arxiv{Figure~\ref{fig:name}}{(Fig.\ 3d)} considers the 2000 United States
 Census~\cite{Census00}, which lists all U.S. last names
corresponding to at least 100 individuals.
With these many repetitions, even just a small fraction of the data
will cover all names, hence we 
first subsampled the data $\ntotal=10^6$ and obtained a list of 
100328 distinct last names. 
As before we estimate for this number
using $n$ randomly chosen names, again with similar conclusions.
\section*{Acknowledgments}
\label{sec:acknowledgements}
This work was partially completed while the authors were visiting the
Simons Institute for the Theory of Computing at UC Berkeley, whose
support is gratefully acknowledged.  We thank Dimitris Achlioptas,
David
Tse, Chi-Hong Tseng, and Jinye Zhang for helpful discussions and comments.
\appendix
\section{Proof of \prettyref{lmm:UGT-ell}}
	\label{app:ell}
\begin{proof}
To rigorously prove an impossibility result for the truncated GT estimator, we demonstrate a particular 
distribution under which the bias is large.  Consider the uniform
distribution over $n/({\fixed}+1)$ symbols, where $\ell$ is a non-zero even integer. By \prettyref{lem:general_bounds}, for
this distribution the bias is
\begin{align*}
\Expect[U-U^\ell]
& =\sum_x e^{-\lambda_x} (1-e^{-\lambda_xt} - h(\lambda_x))\\
& = \frac{n}{{\fixed}+1} e^{-({\fixed}+1)} \left(1-e^{-({\fixed}+1)t} + \sum^{\fixed}_{i=1} \frac{(-({\fixed}+1)t)^i}{i!} \right) \\
& \geq \frac{n}{{\fixed}+1} e^{-({\fixed}+1)} \left(\sum^{\fixed}_{i=1} \frac{(-({\fixed}+1)t)^i}{i!} \right) \\
& \stackrel{(a)}{\geq} \frac{n}{{\fixed}+1} e^{-({\fixed}+1)} \left( \frac{(({\fixed}+1)t)^\fixed}{\fixed!} -  \frac{(({\fixed}+1)t)^{\fixed-1}}{(\fixed-1)!} \right) \\
& \geq \frac{n}{({\fixed}+1)} e^{-({\fixed+1})}\frac{(({\fixed}+1)t)^\fixed}{\fixed!} \cdot \frac{(t-1)}{t}  \\
& \geq \frac{n}{3( \fixed+1)^{3/2}} t^{\fixed} \frac{(t-1)}{t} \geq \frac{n}{3 \cdot 2^{3/2}} \frac{t^{\ell}}{\ell^{3/2}} \frac{(t-1)}{t} ,
\end{align*}
where $(a)$ follows from the fact that $\frac{(-({\fixed}+1)t)^i}{i!}$
for $i =1,\ldots,\ell$ is an alternating series with increasing 
magnitude of terms.
Hence 
\[
\Expect[U-U^\ell]
\geq 
\frac{n}{3\cdot 2^{3/2}}   \frac{(t-1)}{t} \min_{\ell\in \{2,4,\ldots\}} 
\frac{t^{\ell}}{\ell^{3/2}}.
\]
For $t \geq 2$, the above minimum occurs at $\ell=2$ and hence
$ \min_{\ell\in \{2,4,\ldots\}}
\frac{t^{\ell}}{\ell^{3/2}} \geq \frac{(t-1)^{3/2}}{2^{3/2}}$.
For $1<t < 2$, using the fact that $ e^{y}\geq e y $ for $y>0$ and $\log t \geq (t-1)\log 2$ for $1<t<2$, we have $
 \min_{\ell\in \{2,4,\ldots\}}
\frac{t^{\ell}}{\ell^{3/2}} \geq (\frac{2e \log t}{3} )^{3/2} \geq 
(\frac{2 e \log 2 (t-1)}{3} )^{3/2}$.
Thus for any even value of $\ell >0$,
\[
\Expect[U-U^\ell]
\geq 
 \frac{n (t-1)^{5/2}}{6.05 t}.
\]
 A similar argument holds for odd values of
$\ell$ and $\ell = 0$, showing that $|\Expect[U-U^\ell]| \gtrsim  \frac{n (t-1)^{5/2}}{t}$ and hence the desired NMSE bound.
\end{proof}



\newcommand{\etalchar}[1]{$^{#1}$}

\end{document}